\newcommand{\seqnum}[1]{\href{https://oeis.org/#1}{\rm \underline{#1}}}
\newtheorem{theorem}{Theorem}
\newtheorem{corollary}{Corollary}
\def\tbcaption{\def\@captype{table}\caption}
\theoremstyle{remark}
\newtheorem{remark}[theorem]{Remark}
\newtheorem*{theorem*}{Theorem}
\DeclarePairedDelimiter\ceil{\lceil}{\rceil}
\DeclarePairedDelimiter\floor{\lfloor}{\rfloor}
\DeclarePairedDelimiterX{\cif}[1]{(}{)}{\delimsize(#1\delimsize)}
\begin{document}
\UseRawInputEncoding
\title{Horadam cubes}

\author{Luka Podrug\\
    Faculty of Civil Engineering, University of Zagreb\\
    Croatia, Zagreb 10000\\
  \text{luka.podrug@grad.unizg.hr}}

\date{}
\maketitle

\sloppy

\begin{abstract} We define and investigate a new three-parameter family of graphs that further generalizes the Fibonacci and metallic cubes. Namely, the number of vertices in this family of graphs satisfies Horadam recurrence, a linear recurrence of second order with constant coefficients.  It is shown that the new family preserves many appealing and useful properties of the Fibonacci and metallic cubes. In particular, we present recursive decomposition and decomposition into grids. Furthermore, we explore metric and enumerative properties such as the number of edges, distribution of degrees, and cube polynomials. We also investigate the existence of Hamiltonian paths and cycles.   
\end{abstract}

\textbf{AMS subject classification (2020)}: 05C75, 05C30, 05C07. 

\textbf{Keywords}: hypercube, Fibonacci cube, metallic cube, Horadam's recurrence. 

\section{Introduction}\label{intr}

Hypercubes are one of the most famous families of graphs. The set of vertices of the hypercube of dimension $n$, denoted by $Q_n$, contains all binary strings of length $n$, and two vertices are adjacent if the Hamming distance between them is one. Recall that the Hamming distance of two strings is defined as the number of positions on which two strings differ. Hypercubes have been studied, among other reasons, for their potential use as processor interconnection networks of low diameter. However, their appealing properties often come with a cost of inflexible restriction of the number of vertices in the network. To overcome this disadvantage, many families of graphs have been introduced, among them the Fibonacci and Lucas cubes \cite{hsu1, MunCipSalv}. Fibonacci cube of dimension $n$, denoted by $\Gamma_n$ has binary strings as vertices, but strings that contain two consecutive ones are omitted. The adjacency rules are the same as in hypercubes. Binary strings with that property are called {\em Fibonacci strings}. Of course, the vertices of Fibonacci cubes are counted by Fibonacci numbers. Fibonacci cubes attracted a lot of attention and were thoroughly investigated. Many of those results are presented in the excellent recent book {\em Fibonacci Cubes with Applications and Variations}  \cite{FibCubesBook}. A similar approach was used in the definition of the Lucas cubes, but except for omitting the strings with two consecutive ones, the first and last letter in each string can not be both one \cite{MunCipSalv}. Their set of vertices is counted by the Lucas numbers.  In the following years, many generalizations of the Fibonacci and Lucas cubes are considered \cite{hsu2, GenLucCub}, and mostly were obtained by some additional restrictions on the binary strings. In a recent paper \cite{Munarini}, the author chose a new approach. Besides $0$ and $1$ in the alphabet, the author added a new letter $2$. Two vertices are adjacent in one can be obtained from another by replacing $0$ with $1$ or by replacing the block $11$ with $22$, thus obtaining a family of graphs whose vertices are counted by the Pell numbers. So he called the new family the Pell graphs. A similar approach is again used in the definition of the metallic cubes \cite{metallic} and generalized Pell graphs \cite{GenPellGraphs}. The definition of generalized Pell graphs is similar to the definition of the Pell graphs but uses an extended alphabet. The metallic cube $\Pi^a_n$ is defined as a graph whose vertices are strings of length $n$ on the alphabet $\left\lbrace 0,1,\ldots,a\right\rbrace$ where letter $a$ can appear only after $0$. Two vertices are adjacent if they differ in only one position and only by one. Vertices in the metallic cube $\Pi^a_n$ are counted by the sequence $s^a_n$ that satisfies the recurrence $s^a_n=as^a_{n-1}+s^a_{n-2}$ with initial values $s^a_0=1$ and $s^a_1=a$. Generalized Pell graphs and metallic cubes turn out to be promising generalizations, both retaining many appealing properties of the hypercubes and Fibonacci cubes. For example, they are induced subgraphs of hypercubes and median graphs and they both admit the recursive decomposition. 

In this paper, we take a step further. We define and investigate the family of graphs whose number of vertices is given by the sequence that satisfies the recurrence $s^{a,b}_n=as^{a,b}_{n-1}+bs^{a,b}_{n-2}$, thus going beyond the Fibonacci and metallic cubes. Although such sequences were studied already at the beginning of the 20th century \cite{tagiuri}, and probably even earlier, they got their name after A. F. Horadam who popularized them in a series of papers in the early sixties \cite{Horadam0,  Horadam1, Horadam2}. So, we call the new family of graphs {\em the Horadam cubes}.

\section{Definitions}
Let $a,b$ be non-negative integers and let $\mathcal{S}^{a,b}$ denote the free monoid containing $a+b$ generators $\left\lbrace 0,1,2,\dots,a-1,0a,0(a+1),\dots,0(a+b-1)\right\rbrace$. The listed generators are sometimes also called \textit{primitive blocks}. By a \textit{string} we mean an element of monoid $\mathcal{S}^{a,b}$,  i.e., a word from the alphabet $\left\lbrace 0,1,2,\dots,a+b-1\right\rbrace$ with the property that letters $a, a+1,\dots, a+b-1$ can only appear immediately after $0$. Other letters can appear arbitrary. This means that every string from $\mathcal{S}^{a,b}$ can be decomposed in a unique way into primitive blocks. For strings $\alpha=x_1\cdots x_n$ and $\beta=y_1\cdots y_m$ we define their {\em concatenation} as $\alpha\beta=x_1\cdots x_n y_1\cdots y_m$. If $\mathcal{S}^{a,b}_n$ denotes the set of all elements from the monoid $\mathcal{S}^{a,b}$ with length $n$ we can easily obtain that the cardinal number $s^{a,b}_n=\left|\mathcal{S}^{a,b}_n\right|$, for $n\geq 2$, satisfies the Horadam's recursive relation \begin{equation}
s^{a,b}_{n}=as^{a,b}_{n-1}+bs^{a,b}_{n-2} \label{Horadam_recursion}
\end{equation}
with initial values $s^{a,b}_0=1$, $s^{a,b}_1=a$ and $s^{a,b}_{n}=0$ for $n<0$. Indeed, the string of length $n$ can end with an element from the set $\left\lbrace 0,1,\dots,a-1 \right\rbrace$  and the rest of the string can be formed in $s^{a,b}_{n-1}$ ways. That gives us $as^{a,b}_{n-1}$ such strings. If the string ends with a block $0l$ for some $a\leq l\leq a+b-1$ then the rest of the string can be formed in $s^{a,b}_{n-2}$ ways. Since there is $b$ ways to the choose block $0l$, we have $bs^{a,b}_{n-2}$ such strings. These two cases yield the recurrence (\ref{Horadam_recursion}).

In fact, the numbers $s^{a,b}_n$ satisfy the well-known identity
\begin{equation}\label{eq:horadam_number of vertices}
s^{a,b}_n=F_{n+1}(a,b)=\sum\limits_{k=0}^{\floor{n/2}} \binom{n-k}{k}a^{n-2k}b^k,
\end{equation}  
where $F_n(x,y)$ are the modified Fibonacci polynomials defined recursively for $n\geq 2$ as $F_{n}(x,y)=xF_{n-1}(x,y)+yF_{n-2}(x,y)$ with initial conditions $F_0(x,y)=1$ and $F_1(x,y)=x$.

Horadam's recursive relation generalizes many well-known sequences. Note that by setting $a=b=1$, one obtains the recursive relation for the (shifted) Fibonacci numbers (\seqnum{A000045} in \cite{oeis}) $$F_n=F_{n-1}+F_{n-2},\space n\geq 2.$$ 
By setting $a=2$ and $b=1$ we have a recursive relation for the (shifted) Pell numbers (\seqnum{A000129} in \cite{oeis}) $$P_n=2P_{n-1}+P_{n-2},\space n\geq 2,$$ 
and $a=1$ and $b=2$ yields (shifted) Jacobsthal numbers (\seqnum{A001045} in \cite{oeis}) $$J_n=J_{n-1}+2J_{n-2},\space n\geq 2.$$    

Starting from the recursive relation (\ref{Horadam_recursion}), one can readily obtain the generating function $S(x)=\sum_{n\geq 0}s^{a,b}_nx^n$ for the sequence $s^{a,b}_n$: \begin{align}\label{eq:Horadam_seq_generating_function}
S(x)=\dfrac{1}{1-ax-bx^2}.
\end{align}  

Now we move toward the definition of the Horadam cubes. Let $\Pi_n^{a,b}$ denote the graph such that $ V\left(\Pi_n^{a,b}\right)=\mathcal{S}^{a,b}_n$ and for any $v_1,v_2\in V\left(\Pi_n^{a,b}\right)$ we have  $v_1v_2\in E\left(\Pi_n^{a,b}\right)$  if and only if one vertex can be obtained from the other by replacing a single letter $k$ with $k+1$ for $0\leq k\leq a+b-2$. Alternatively, $v_1=x_1\cdots x_n$ and $v_2=y_1\cdots y_m$ are adjacent if and only if $\sum\limits_{k=1}^n|x_i-y_i|=1$. The graph $\Pi^{a,b}_0$ is a trivial graph, i.e., the graph that contains a single vertex. Note that for $b=1$, one obtains the metallic cubes, and for $a=1$ and $b=1$, the Fibonacci cubes. As an example, Figure \ref{fig: Jabotsthal cubes} shows Horadam cubes for $a=1$ and $b=2$, which can also be referred to as {\em Jacobsthal cubes}, since their vertices are counted by Jacobsthal numbers $J_n$. Another example for $a=3$ and $b=2$ is presented in Figure \ref{fig:Pi_n^{3,2}}.  

\begin{figure}[h!] \centering \begin{tikzpicture}[scale=0.6]
\tikzmath{\x1 = 0.35; \y1 =-0.05; \z1=90; \w1=0.2;   \xs=-50; \ys=0;
\x2 = \x1 + 1; \y2 =\y1 +3; } 
\small
\node [label={[label distance=\y1 cm]\z1: $000$},circle,fill=blue,draw=black,scale=\x1](A3) at (0,6) {};
\node [label={[label distance=\y1 cm]\z1: $001$},circle,fill=blue,draw=black,scale=\x1](A2) at (1.5,7.5) {};
\node [label={[label distance=\y1 cm]\z1: $002$},circle,fill=blue,draw=black,scale=\x1](A1) at (3,9) {};
\node [label={[label distance=\y1 cm]\z1: $010$},circle,fill=blue,draw=black,scale=\x1](A4) at (1.5,4.5) {};
\node [label={[label distance=\y1 cm]\z1: $020$},circle,fill=blue,draw=black,scale=\x1](A5) at (3,3) {};

\draw [line width=\w1 mm] (A1)--(A2)--(A3)--(A4)--(A5); 
\end{tikzpicture}\begin{tikzpicture}[scale=0.6]
\tikzmath{\x1 = 0.35; \y1 =-0.05; \z1=90; \w1=0.2;   \xs=-50; \ys=0;
\x2 = \x1 + 1; \y2 =\y1 +3; } 
\small
\node [label={[label distance=\y1 cm]\z1: $0000$},circle,fill=blue,draw=black,scale=\x1](A3) at (0,6) {};
\node [label={[label distance=\y1 cm]\z1: $0001$},circle,fill=blue,draw=black,scale=\x1](A2) at (1.5,7.5) {};
\node [label={[label distance=\y1 cm]\z1: $0002$},circle,fill=blue,draw=black,scale=\x1](A1) at (3,9) {};
\node [label={[label distance=\y1 cm]\z1: $0010$},circle,fill=blue,draw=black,scale=\x1](A4) at (1.5,4.5) {};
\node [label={[label distance=\y1 cm]\z1: $0020$},circle,fill=blue,draw=black,scale=\x1](A5) at (3,3) {};
\node [label={[label distance=\y1 cm]\z1: $0100$},circle,fill=blue,draw=black,scale=\x1](A6) at (-1.5,7.5) {};
\node [label={[label distance=\y1 cm]\z1: $0101$},circle,fill=blue,draw=black,scale=\x1](A7) at (0,9) {};
\node [label={[label distance=\y1 cm]\z1: $0102$},circle,fill=blue,draw=black,scale=\x1](A8) at (1.5,10.5) {};
\node [label={[label distance=\y1 cm]\z1: $0200$},circle,fill=blue,draw=black,scale=\x1](A9) at (-3,9) {};
\node [label={[label distance=\y1 cm]\z1: $0201$},circle,fill=blue,draw=black,scale=\x1](A10) at (-1.5,10.5) {};
\node [label={[label distance=\y1 cm]\z1: $0202$},circle,fill=blue,draw=black,scale=\x1](A11) at (0,12) {};
\draw [line width=\w1 mm] (A1)--(A2)--(A3)--(A4)--(A5) (A6)--(A7)--(A8) (A9)--(A10)--(A11)  (A3)--(A6)--(A9) (A2)--(A7)--(A10) (A1)--(A8)--(A11) ; 
\end{tikzpicture}
\begin{tikzpicture}[scale=0.6]
\tikzmath{\x1 = 0.35; \y1 =-0.05; \z1=90; \w1=0.2;   \xs=35; \ys=0;
\x2 = \x1 + 1; \y2 =\y1 +3; } 
\small

\node [label={[label distance=\y1 cm]\z1: $00000$},circle,fill=blue,draw=black,scale=\x1](A3) at (0,6) {};
\node [label={[label distance=\y1 cm]\z1: $00001$},circle,fill=blue,draw=black,scale=\x1](A2) at (1.5,7.5) {};
\node [label={[label distance=\y1 cm]\z1: $00002$},circle,fill=blue,draw=black,scale=\x1](A1) at (3,9) {};
\node [label={[label distance=\y1 cm]\z1: $00010$},circle,fill=blue,draw=black,scale=\x1](A4) at (1.5,4.5) {};
\node [label={[label distance=\y1 cm]\z1: $00020$},circle,fill=blue,draw=black,scale=\x1](A5) at (3,3) {};
\node [label={[label distance=\y1 cm]\z1: $00100$},circle,fill=blue,draw=black,scale=\x1](A6) at (-1.5,7.5) {};
\node [label={[label distance=\y1 cm]\z1: $00101$},circle,fill=blue,draw=black,scale=\x1](A7) at (0,9) {};
\node [label={[label distance=\y1 cm]\z1: $00102$},circle,fill=blue,draw=black,scale=\x1](A8) at (1.5,10.5) {};
\node [label={[label distance=\y1 cm]\z1: $00200$},circle,fill=blue,draw=black,scale=\x1](A9) at (-3,9) {};
\node [label={[label distance=\y1 cm]\z1: $00201$},circle,fill=blue,draw=black,scale=\x1](A10) at (-1.5,10.5) {};
\node [label={[label distance=\y1 cm]\z1: $00202$},circle,fill=blue,draw=black,scale=\x1](A11) at (0,12) {};

\node [label={[label distance=\y1 cm]\z1: $01000$},xshift=\xs,yshift=\ys,circle,fill=blue,draw=black,scale=\x1](A14) at (0,6) {};
\node [label={[label distance=\y1 cm]\z1: $01001$},xshift=\xs,yshift=\ys,circle,fill=blue,draw=black,scale=\x1](A13) at (1.5,7.5) {};
\node [label={[label distance=\y1 cm]\z1: $01002$},xshift=\xs,yshift=\ys,circle,fill=blue,draw=black,scale=\x1](A12) at (3,9) {};
\node [label={[label distance=\y1 cm]\z1: $01010$},xshift=\xs,yshift=\ys,circle,fill=blue,draw=black,scale=\x1](A15) at (1.5,4.5) {};
\node [label={[label distance=\y1 cm]\z1: $01020$},xshift=\xs,yshift=\ys,circle,fill=blue,draw=black,scale=\x1](A16) at (3,3) {};

\node [label={[label distance=\y1 cm]\z1: $02000$},xshift=2*\xs,yshift=2*\ys,circle,fill=blue,draw=black,scale=\x1](A19) at (0,6) {};
\node [label={[label distance=\y1 cm]\z1: $02001$},xshift=2*\xs,yshift=2*\ys,circle,fill=blue,draw=black,scale=\x1](A18) at (1.5,7.5) {};
\node [label={[label distance=\y1 cm]\z1: $02002$},xshift=2*\xs,yshift=2*\ys,circle,fill=blue,draw=black,scale=\x1](A17) at (3,9) {};
\node [label={[label distance=\y1 cm]\z1: $02010$},xshift=2*\xs,yshift=2*\ys,circle,fill=blue,draw=black,scale=\x1](A20) at (1.5,4.5) {};
\node [label={[label distance=\y1 cm]\z1: $02020$},xshift=2*\xs,yshift=2*\ys,circle,fill=blue,draw=black,scale=\x1](A21) at (3,3) {};
\draw [line width=\w1 mm] (A1)--(A2)--(A3)--(A4)--(A5) (A6)--(A7)--(A8) (A9)--(A10)--(A11)  (A3)--(A6)--(A9) (A2)--(A7)--(A10) (A1)--(A8)--(A11) (A12)--(A13)--(A14)--(A15)--(A16) (A17)--(A18)--(A19)--(A20)--(A21)  (A1)--(A12)--(A17) (A2)--(A13)--(A18) (A3)--(A14)--(A19) (A4)--(A15)--(A20) (A5)--(A16)--(A21); 
\end{tikzpicture}
\caption{The Jacobsthal cube for $n=3,4,5$.} \label{fig: Jabotsthal cubes}
\end{figure}
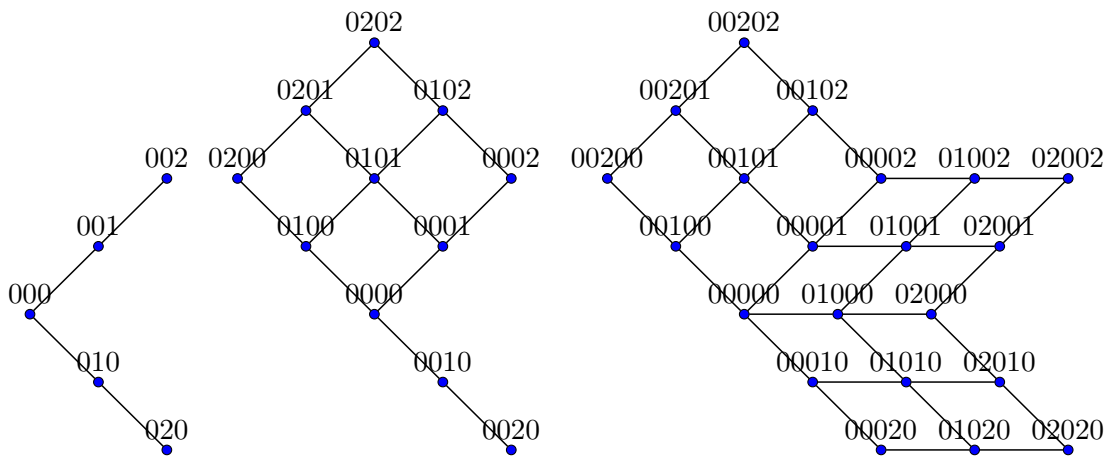

\begin{figure}[h!] \centering \begin{tikzpicture}[scale=0.6]
\tikzmath{\x1 = 0.35; \y1 =-0.05; \z1=180; \w1=0.2; \xs=-8; \ys=0; \yss=-5;
\x2 = \x1 + 1; \y2 =\y1 +3; } 
\small
\node [label={[label distance=\y1 cm]\z1: $0$},circle,fill=blue,draw=black,scale=\x1](A1) at (0,4) {};
\node [label={[label distance=\y1 cm]\z1: $1$},circle,fill=blue,draw=black,scale=\x1](A4) at (-1.5,2.5) {};
\node [label={[label distance=\y1 cm]\z1: $2$},circle,fill=blue,draw=black,scale=\x1](A7) at (-3,1) {};

\draw [line width=\w1 mm] (A1)--(A4)--(A7);

\end{tikzpicture} \centering \begin{tikzpicture}[scale=0.6]
\tikzmath{\x1 = 0.35; \y1 =-0.05; \z1=180; \w1=0.2; \xs=-8; \ys=0; \yss=-5;
\x2 = \x1 + 1; \y2 =\y1 +3; } 
\small
\node [label={[label distance=\y1 cm]\z1: $00$},circle,fill=blue,draw=black,scale=\x1](A1) at (0,4) {};
\node [label={[label distance=\y1 cm]\z1: $10$},circle,fill=blue,draw=black,scale=\x1](A2) at (0,5) {};
\node [label={[label distance=\y1 cm]\z1: $20$},circle,fill=blue,draw=black,scale=\x1](A3) at (0,6) {};
\node [label={[label distance=\y1 cm]\z1: $01$},circle,fill=blue,draw=black,scale=\x1](A4) at (-1.5,2.5) {};
\node [label={[label distance=\y1 cm]\z1: $11$},circle,fill=blue,draw=black,scale=\x1](A5) at (-1.5,3.5) {};
\node [label={[label distance=\y1 cm]\z1: $21$},circle,fill=blue,draw=black,scale=\x1](A6) at (-1.5,4.5) {};
\node [label={[label distance=\y1 cm]\z1: $02$},circle,fill=blue,draw=black,scale=\x1](A7) at (-3,1) {};
\node [label={[label distance=\y1 cm]\z1: $12$},circle,fill=blue,draw=black,scale=\x1](A8) at (-3,2) {};
\node [label={[label distance=\y1 cm]\z1: $22$},circle,fill=blue,draw=black,scale=\x1](A9) at (-3,3) {};
\node [label={[label distance=\y1 cm]\z1: $03$},circle,fill=blue,draw=black,scale=\x1](A28) at (-4.5,-0.5) {};
\node [label={[label distance=\y1 cm]\z1: $04$},circle,fill=blue,draw=black,scale=\x1](A37) at (-6,-2) {};

\draw [line width=\w1 mm] (A1)--(A2)--(A3) (A4)--(A5)--(A6) (A7)--(A8)--(A9)  (A1)--(A4)--(A7)--(A28)--(A37) (A2)--(A5)--(A8) (A3)--(A6)--(A9);

\end{tikzpicture} \begin{tikzpicture}[scale=0.6]
\tikzmath{\x1 = 0.35; \y1 =-0.05; \z1=180; \w1=0.2; \xs=-8; \ys=0; \yss=-5;
\x2 = \x1 + 1; \y2 =\y1 +3; } 
\small
\node [label={[label distance=\y1 cm]\z1: $000$},circle,fill=blue,draw=black,scale=\x1](A1) at (0,4) {};
\node [label={[label distance=\y1 cm]\z1: $010$},circle,fill=blue,draw=black,scale=\x1](A2) at (0,5) {};
\node [label={[label distance=\y1 cm]\z1: $020$},circle,fill=blue,draw=black,scale=\x1](A3) at (0,6) {};
\node [label={[label distance=\y1 cm]\z1: $001$},circle,fill=blue,draw=black,scale=\x1](A4) at (-1.5,2.5) {};
\node [label={[label distance=\y1 cm]\z1: $011$},circle,fill=blue,draw=black,scale=\x1](A5) at (-1.5,3.5) {};
\node [label={[label distance=\y1 cm]\z1: $021$},circle,fill=blue,draw=black,scale=\x1](A6) at (-1.5,4.5) {};
\node [label={[label distance=\y1 cm]\z1: $002$},circle,fill=blue,draw=black,scale=\x1](A7) at (-3,1) {};
\node [label={[label distance=\y1 cm]\z1: $012$},circle,fill=blue,draw=black,scale=\x1](A8) at (-3,2) {};
\node [label={[label distance=\y1 cm]\z1: $022$},circle,fill=blue,draw=black,scale=\x1](A9) at (-3,3) {};
\node [label={[label distance=\y1 cm]\z1: $100$},circle,fill=blue,draw=black,scale=\x1](A10) at (1.5,2.5) {};
\node [label={[label distance=\y1 cm]\z1: $110$},circle,fill=blue,draw=black,scale=\x1](A11) at (1.5,3.5) {};
\node [label={[label distance=\y1 cm]\z1: $120$},circle,fill=blue,draw=black,scale=\x1](A12) at (1.5,4.5) {};
\node [label={[label distance=\y1 cm]\z1: $101$},circle,fill=blue,draw=black,scale=\x1](A13) at (0,1) {};
\node [label={[label distance=\y1 cm]\z1: $111$},circle,fill=blue,draw=black,scale=\x1](A14) at (0,2) {};
\node [label={[label distance=\y1 cm]\z1: $121$},circle,fill=blue,draw=black,scale=\x1](A15) at (0,3) {};
\node [label={[label distance=\y1 cm]\z1: $102$},circle,fill=blue,draw=black,scale=\x1](A16) at (-1.5,-0.5) {};
\node [label={[label distance=\y1 cm]\z1: $112$},circle,fill=blue,draw=black,scale=\x1](A17) at (-1.5,0.5) {};
\node [label={[label distance=\y1 cm]\z1: $122$},circle,fill=blue,draw=black,scale=\x1](A18) at (-1.5,1.5) {};
\node [label={[label distance=\y1 cm]\z1: $200$},circle,fill=blue,draw=black,scale=\x1](A19) at (3,1) {};
\node [label={[label distance=\y1 cm]\z1: $210$},circle,fill=blue,draw=black,scale=\x1](A20) at (3,2) {};
\node [label={[label distance=\y1 cm]\z1: $220$},circle,fill=blue,draw=black,scale=\x1](A21) at (3,3) {};
\node [label={[label distance=\y1 cm]\z1: $201$},circle,fill=blue,draw=black,scale=\x1](A22) at (1.5,-0.5) {};
\node [label={[label distance=\y1 cm]\z1: $211$},circle,fill=blue,draw=black,scale=\x1](A23) at (1.5,0.5) {};
\node [label={[label distance=\y1 cm]\z1: $221$},circle,fill=blue,draw=black,scale=\x1](A24) at (1.5,1.5) {};
\node [label={[label distance=\y1 cm]\z1: $202$},circle,fill=blue,draw=black,scale=\x1](A25) at (0,-2) {};
\node [label={[label distance=\y1 cm]\z1: $212$},circle,fill=blue,draw=black,scale=\x1](A26) at (0,-1) {};
\node [label={[label distance=\y1 cm]\z1: $222$},circle,fill=blue,draw=black,scale=\x1](A27) at (0,0) {};
\node [label={[label distance=\y1 cm]\z1: $003$},circle,fill=blue,draw=black,scale=\x1](A28) at (-4.5,-0.5) {};
\node [label={[label distance=\y1 cm]\z1: $103$},circle,fill=blue,draw=black,scale=\x1](A29) at (-3,-2) {};
\node [label={[label distance=\y1 cm]\z1: $203$},circle,fill=blue,draw=black,scale=\x1](A30) at (-1.5,-3.5) {};
\node [label={[label distance=\y1 cm]\z1: $030$},circle,fill=blue,draw=black,scale=\x1](A31) at (-1.5,7.5) {};
\node [label={[label distance=\y1 cm]\z1: $031$},circle,fill=blue,draw=black,scale=\x1](A32) at (-3,6) {};
\node [label={[label distance=\y1 cm]\z1: $032$},circle,fill=blue,draw=black,scale=\x1](A33) at (-4.5,4.5) {};
\node [label={[label distance=\y1 cm]\z1: $040$},circle,fill=blue,draw=black,scale=\x1](A34) at (-3,9) {};
\node [label={[label distance=\y1 cm]\z1: $041$},circle,fill=blue,draw=black,scale=\x1](A35) at (-4.5,7.5) {};
\node [label={[label distance=\y1 cm]\z1: $042$},circle,fill=blue,draw=black,scale=\x1](A36) at (-6,6) {};
\node [label={[label distance=\y1 cm]\z1: $004$},circle,fill=blue,draw=black,scale=\x1](A37) at (-6,-2) {};
\node [label={[label distance=\y1 cm]\z1: $104$},circle,fill=blue,draw=black,scale=\x1](A38) at (-4.5,-3.5) {};
\node [label={[label distance=\y1 cm]\z1: $204$},circle,fill=blue,draw=black,scale=\x1](A39) at (-3,-5) {};

\draw [line width=\w1 mm] (A1)--(A2)--(A3) (A4)--(A5)--(A6) (A7)--(A8)--(A9) (A10)--(A11)--(A12) (A13)--(A14)--(A15) (A16)--(A17)--(A18) (A19)--(A20)--(A21) (A22)--(A23)--(A24) (A25)--(A26)--(A27) (A1)--(A4)--(A7)--(A28) (A2)--(A5)--(A8) (A3)--(A6)--(A9) (A10)--(A13)--(A16)--(A29) (A11)--(A14)--(A17) (A12)--(A15)--(A18) (A19)--(A22)--(A25)--(A30) (A20)--(A23)--(A26) (A21)--(A24)--(A27) (A31)--(A32)--(A33) (A34)--(A35)--(A36) (A28)--(A37) (A29)--(A38) (A30)--(A39) (A1)--(A10)--(A19) (A2)--(A11)--(A20) (A31)--(A3)--(A12)--(A21) (A4)--(A13)--(A22) (A5)--(A14)--(A23) (A32)--(A6)--(A15)--(A24) (A7)--(A16)--(A25) (A8)--(A17)--(A26) (A33)--(A9)--(A18)--(A27)  (A28)--(A29)--(A30) (A37)--(A38)--(A39) (A31)--(A34) (A32)--(A35) (A33)--(A36); 


\end{tikzpicture}
\caption{The Horadam cube $\Pi^{3,2}_n$ for $n=1,2,3$.} \label{fig:Pi_n^{3,2}}
\end{figure}
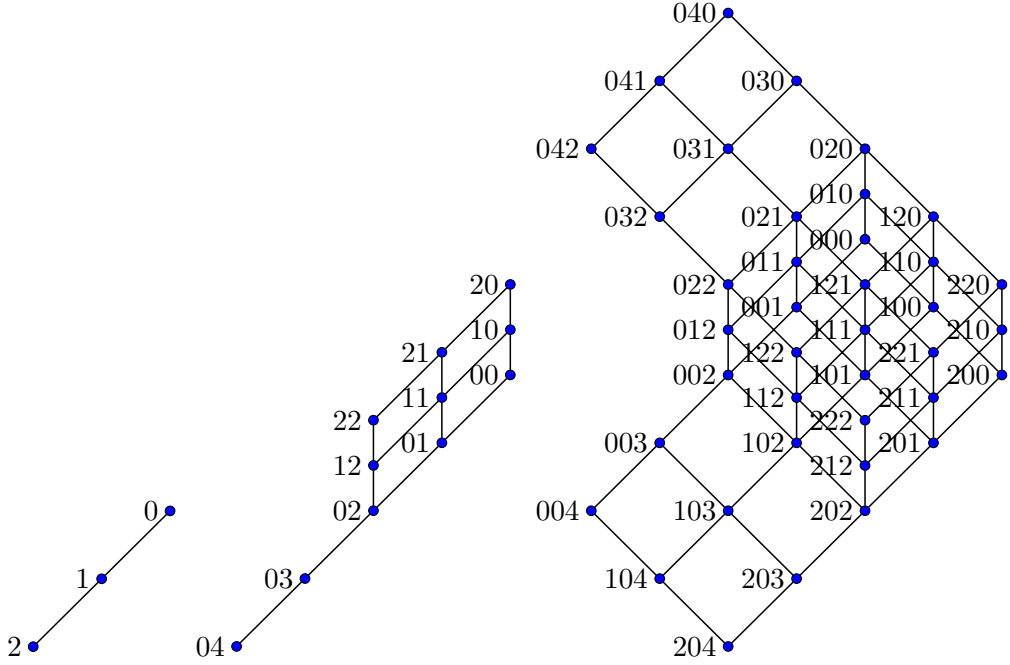

\section{Canonical decomposition and bipartivity}
The canonical decomposition of the Fibonacci cubes and metallic cubes \cite{metallic, MunSalvi} naturally extends to the Horadam cubes. Similar to those cases, the set of vertices $\mathcal{S}_n^{a,b}$ can be arranged into disjoint sets based on the starting letter. Namely, there are $b$ sets containing the vertices starting with block $0a, 0(a+1),\ldots, 0(a+b-2)$ and $0(a+b-1)$, and the remaining $a$ sets containing the vertices that start with $0, 1,\dots, a-2$ and $a-1$, respectively. Note that in the latter case, we only consider the vertices that start with $0$, but not with the block $0l$ for some $a\leq l\leq b$.

Assuming $\alpha\in \mathcal{S}^{a,b}_{n-1}$ and $\beta\in \mathcal{S}^{a,b}_{n-2}$,
the vertices $0\alpha$, $1\alpha$, $\dots$, $(a-1)\alpha$ generate $a$ copies
of $\Pi^{a,b}_{n-1}$ and vertices $0a\beta, 0(a+1)\beta,\ldots 0(a+b-1)\beta$ generate $b$ copies of $\Pi^{a,b}_{n-2}$. A copy of the subgraph $\Pi^{a,b}_{n-1}$ induced by vertices in $\Pi^{a,b}_n$ starting with the letter $k, 0\leq k\leq a-1$, is denoted by $k\Pi^{a,b}_{n-1}$, and a copy of the subgraph $\Pi^{a,b}_{n-2}$ induced by vertices in $\Pi^{a,b}_n$ starting with the block $0l, a\leq l\leq a+b-1$, is denoted by $0l\Pi^{a,b}_{n-2}$. This notation is useful if it is important to distinguish those copies. For example, $0\Pi^{a,b}_{n-1}$ denotes the induced subgraph of $\Pi^{a,b}_{n}$ isomorphic to $\Pi^{a,b}_{n-1}$ induced by vertices in $\Pi^{a,b}_n$ starting with $0$.

Before we state the theorem, it is useful to remind the reader that for two graphs $G$ and $H$, their Cartesian product is the graph denoted by $G\square H$ 
with $V(G\square H)=V(G)\times V(H)$ and $(u_1,v_1)(u_2,v_2)\in E(G\square H)$ if $u_1=u_2$ and $v_1$ and $v_2$ are adjacent in $H$, or $v_1=v_2$ and $u_1$ and $u_2$ are adjacent in $G$ \cite{Handbook_of_Product_Graphs}. 

\begin{theorem}\label{tm:candec_hor} For $n\geq 2$, the Horadam cube $\Pi^{a,b}_n$ admits the decomposition
\begin{equation*}
\Pi^{a,b}_n=\underbrace{\Pi^{a,b}_{n-1}\oplus\cdots\oplus\Pi^{a,b}_{n-1}}_{a \textup{ copies}} \oplus \underbrace{\Pi^{a,b}_{n-2}\oplus\cdots\oplus\Pi^{a,b}_{n-2}}_{b \textup{ copies}}=P_a\square\Pi^{a,b}_{n-1} \oplus P_b \square\Pi^{a,b}_{n-2}, 
\end{equation*} 
where $P_a$ and $P_b$ denote the path graphs of lengths $a$ and $b$, respectively. \end{theorem}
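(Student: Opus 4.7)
The plan is to prove the decomposition by partitioning $V(\Pi^{a,b}_n)=\mathcal{S}^{a,b}_n$ according to the first primitive block of each string---yielding $a$ classes whose first block is a single letter $k\in\{0,1,\ldots,a-1\}$ and $b$ classes whose first block is $0l$ with $l\in\{a,\ldots,a+b-1\}$---and then reading off the full adjacency structure directly from the edge definition of $\Pi^{a,b}_n$.

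First I would justify the partition and exhibit the bijections. Since every element of the free monoid $\mathcal{S}^{a,b}$ admits a unique primitive-block decomposition, the $a+b$ classes are pairwise disjoint and exhaust $\mathcal{S}^{a,b}_n$. The maps $k\alpha\mapsto\alpha$ and $0l\beta\mapsto\beta$ supply bijections onto $\mathcal{S}^{a,b}_{n-1}$ and $\mathcal{S}^{a,b}_{n-2}$; here I use that no element of $\mathcal{S}^{a,b}_m$ can begin with a letter $\geq a$, so $\alpha$ ranges unrestrictedly over $\mathcal{S}^{a,b}_{n-1}$ and $\beta$ over $\mathcal{S}^{a,b}_{n-2}$. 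This recovers $s^{a,b}_n=as^{a,b}_{n-1}+bs^{a,b}_{n-2}$ by counting.

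Next I would verify that each class induces the claimed smaller Horadam cube. For any two vertices sharing a first primitive block, the single coordinate of unit difference imposed by the edge condition lies strictly inside the tail $\alpha$ (resp. $\beta$), so the induced subgraph is isomorphic to $\Pi^{a,b}_{n-1}$ (resp. $\Pi^{a,b}_{n-2}$) via the bijection above. A short case analysis then classifies the remaining edges by which position carries the unit change: (i) edges of the form $k\alpha\sim(k+1)\alpha$ for $0\leq k\leq a-2$, which together with the within-class edges assemble the $a$ copies of $\Pi^{a,b}_{n-1}$ into $P_a\square\Pi^{a,b}_{n-1}$; (ii) edges $0l\beta\sim 0(l+1)\beta$ for $a\leq l\leq a+b-2$, similarly assembling the $b$ copies into $P_b\square\Pi^{a,b}_{n-2}$; and (iii) the bridging edges $0(a-1)\gamma\sim 0a\gamma$, which realize the outer $\oplus$ joining the two compound pieces.

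The main obstacle will be ruling out spurious cross-edges between the two groups. Concretely, I must show that whenever a vertex $k\alpha$ (single-letter class) differs from a vertex $0l\beta$ (two-letter class) in a single coordinate by one, one is forced into the configuration $k=0$, $l=a$, and $\alpha=(a-1)\beta$. The key input here is again the restriction that $\alpha\in\mathcal{S}^{a,b}_{n-1}$ cannot start with a letter $\geq a$: this eliminates matches such as $\alpha=l\beta$ with $l\geq a$ when $k=1$, and leaves only the boundary identification $k=a-1$ meets $l=a$ inside the second coordinate of $0\alpha$ versus $0l\beta$. Once this case analysis is complete, the three edge families above exhaust $E(\Pi^{a,b}_n)$, and the stated decomposition follows.
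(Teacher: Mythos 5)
Your proof is correct and follows essentially the same route as the paper, which states the theorem after describing exactly this partition of $\mathcal{S}^{a,b}_n$ by first primitive block into $a$ copies of $\Pi^{a,b}_{n-1}$ and $b$ copies of $\Pi^{a,b}_{n-2}$, with the extra edges joining $0(a-1)\Pi^{a,b}_{n-2}\subset 0\Pi^{a,b}_{n-1}$ to $0a\Pi^{a,b}_{n-2}$. The paper leaves the verification implicit, so your explicit check that no spurious cross-edges occur (using that no string in $\mathcal{S}^{a,b}_{n-1}$ can begin with a letter $\geq a$) supplies missing rigor rather than a different method.
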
 
The Horadam cube $\Pi^{a,b}_n$, besides the edges in $P_a\square\Pi^{a,b}_{n-1}$ and $P_b \square\Pi^{a,b}_{n-2}$ also contains the edges connecting the subgraphs $0a\Pi^{a,b}_{n-2}$ and  $0(a-1)\Pi^{a,b}_{n-2}\subset 0\Pi^{a,b}_{n-1}$. The decomposition in Theorem \ref{tm:candec_hor} is called the {\em canonical decomposition} of $\Pi^{a,b}_n$. 

\begin{figure}\centering
\newcommand{\boundellipse}[3]
{(#1) ellipse (#2 and #3)
}

\begin{tikzpicture}[scale=0.8]\tikzmath{\x1 = 0.35; \y1 =-0.05; \z1=-90; \w1=0.2; \xs=-8; \ys=0; \yss=-5;
\x2 = \x1 + 1; \y2 =\y1 +3; } \small
\draw \boundellipse{-2.5,-0.9}{0.5}{1};
\draw \boundellipse{0.5,-0.9}{0.5}{1};
\draw \boundellipse{2.5,-0.9}{0.5}{1};
\draw \boundellipse{2.5,0}{1}{2};
\draw \boundellipse{5,0}{1}{2};
\draw \boundellipse{9,0}{1}{2};
\node [label={[label distance=\y1 cm]\z1: $0\alpha$},circle,fill=blue,draw=black,scale=\x1](A2) at (2.5,1) {};
\node [label={[label distance=\y1 cm]\z1: $1\alpha$},circle,fill=blue,draw=black,scale=\x1](A3) at (5,1) {};
\node [label={[label distance=\y1 cm]\z1: $(a-1)\alpha$},circle,fill=blue,draw=black,scale=\x1](A4) at (9,1) {};
\node [label={[label distance=-0.35 cm]\z1: $\cdots$}](A5) at (7,1) {};
\node [label={[label distance=-0.35 cm]\z1: $\cdots$}](A5) at (-1,-0.9) {};

\draw [line width=\w1 mm,dashed] (A2)--(A3)--(6.25,1)  (7.75,1)--(A4);

\node [label={[label distance=\y1 cm]\z1: $0a\beta$},circle,fill=blue,draw=black,scale=\x1](A8) at (0.5,-0.9) {};

\node [label={[label distance=\y1 cm]\z1: $0(a-1)\beta$},circle,fill=blue,draw=black,scale=\x1](A6) at (2.5,-0.9) {};
\node [label={[label distance=\y1 cm]\z1: $0(a+b-1)\beta$},circle,fill=blue,draw=black,scale=\x1](A10) at (-2.5,-0.9) {};

\draw [line width=\w1 mm,dashed] (-0.25,-0.9)--(A8)--(A6) (A10)--(-1.75,-0.9); 

\draw [black, left=3pt,
    decorate, 
    decoration = {brace,amplitude=5pt}](1.5,2.2)--(10,2.2) node[pos=0.5,above=5pt,black]{$P_a\square\Pi^{a,b}_{n-1}$};
    
\draw [black, left=3pt,
    decorate, 
    decoration = {brace,amplitude=5pt}](1,-2.2)--(-3,-2.2) node[pos=0.5,below=5pt,black]{$P_b\square\Pi^{a,b}_{n-2}$};
\end{tikzpicture}\caption{Canonical decomposition of $\Pi^{a,b}_n=P_a\square\Pi^{a,b}_{n-1}\oplus P_b\square \Pi^{a,b}_{n-2}$.}\label{fig_can_dec_diagram_hor}
\end{figure}

In Figure \ref{fig_can_dec_diagram_hor} we present a schematic representation of
the described canonical decomposition and Figure \ref{fig:canon dec and coloring of Pi_4^{2,2}} shows the canonical decomposition of the Horadam cube $\Pi_4^{2,2}$. 

A map $\chi:V(G)\to\left\lbrace 0,1 \right\rbrace$ is a {\em proper
$2$-coloring} of a graph $G$ if $\chi(v_1)\neq \chi(v_2)$ for every two
adjacent vertices $v_1,v_2\in V(G)$. A graph $G$ is {\em bipartite} if its
set of vertices $V(G)$ can be decomposed into two disjoint subsets such that
no two vertices of the same subset share an edge. Equivalently, a graph $G$
is bipartite if and only if it admits a proper $2$-coloring.

\begin{theorem}
All Horadam cubes are bipartite.
\end{theorem}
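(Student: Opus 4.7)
The plan is to exhibit an explicit proper $2$-coloring of $\Pi^{a,b}_n$, bypassing any inductive machinery. Define the map $\chi:V(\Pi^{a,b}_n)\to\{0,1\}$ by
\begin{equation*}
\chi(x_1x_2\cdots x_n)=(x_1+x_2+\cdots+x_n)\bmod 2,
\end{equation*}
that is, the parity of the digit sum of the string. This is clearly well-defined on $\mathcal{S}^{a,b}_n$ since it only uses the integer values of the letters.

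Next I would verify that $\chi$ is a proper $2$-coloring. By the definition of adjacency in the Horadam cube, if $v_1v_2\in E(\Pi^{a,b}_n)$ then $v_2$ is obtained from $v_1$ by replacing a single letter $k$ with $k+1$ (or vice versa). Consequently the digit sums of $v_1$ and $v_2$ differ by exactly $1$, so their parities differ and $\chi(v_1)\neq\chi(v_2)$. Hence $\chi$ is a proper $2$-coloring and $\Pi^{a,b}_n$ is bipartite.

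There is no real obstacle here: the hard work has already been done in how adjacency was defined (difference of exactly one in one coordinate), and this is precisely what makes the parity coloring succeed. For context, one could alternatively argue inductively from Theorem \ref{tm:candec_hor}, using that $P_a$ and $P_b$ are bipartite and that the Cartesian product of bipartite graphs is bipartite, but one would still need to check that the extra edges between $0a\Pi^{a,b}_{n-2}$ and $0(a-1)\Pi^{a,b}_{n-2}$ respect the bipartition; the parity argument handles all cases uniformly and more transparently.
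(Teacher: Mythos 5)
Your proof is correct and is essentially identical to the paper's: both define $\chi$ as the parity of the digit sum and observe that adjacency changes a single letter by exactly one, so the parity flips. No further comment is needed.
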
 \begin{proof}
Consider the map $\chi:\Pi^{a,b}_{n}\to \left\lbrace 0,1 \right\rbrace$ defined as follows: for $v=\alpha_1\cdots\alpha_n\in V(\Pi^{a,b}_{n})$, we define $$\chi(v)=\sum \alpha_i \space \mod 2.$$
Since any two vertices are adjacent if and only if they differ in only one position and by one, $\chi$ maps all neighboring vertices into different numbers. Thus we obtained a proper $2$-coloring for $\Pi^{a,b}_{n}$.  \end{proof}
Figure \ref{fig:canon dec and coloring of Pi_4^{2,2}}  shows a proper $2$-coloring of
the metallic cube $\Pi^{2,2}_4$.

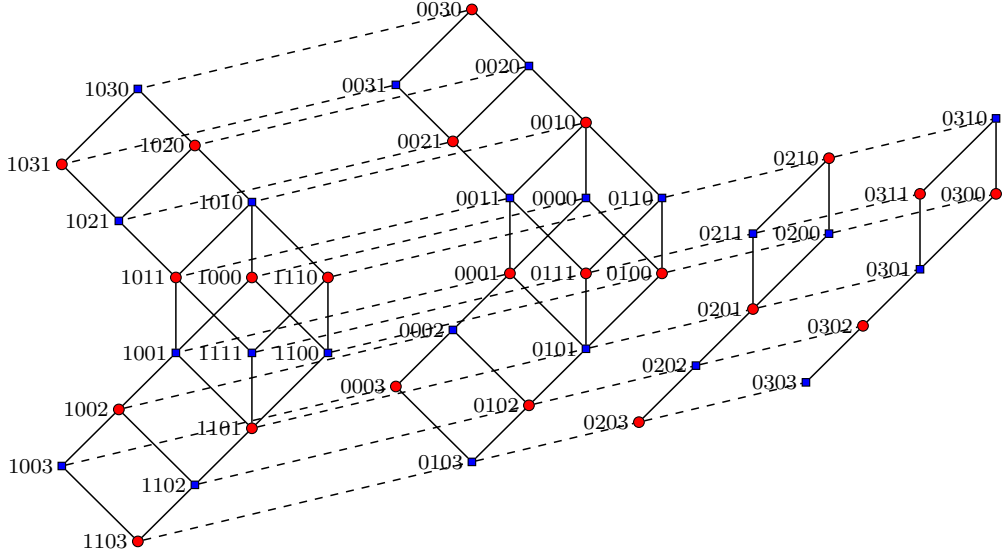
\begin{figure}[h!] \centering \begin{tikzpicture}[scale=1]
\tikzmath{\x1 = 0.5; \y1 =-0.05; \z1=180; \w1=0.2; \xs=-125; \ys=-30; \yss=-5;
\x2 = \x1 + 1; \y2 =\y1 +3; } 
\scriptsize

\node [label={[label distance=\y1 cm]\z1: $0000$},fill=blue,draw=black,scale=\x1](A1) at (1,2) {};
\node [label={[label distance=\y1 cm]\z1: $0001$},circle,fill=red,draw=black,scale=\x1](A2) at (0,1) {};
\node [label={[label distance=\y1 cm]\z1: $0101$},fill=blue,draw=black,scale=\x1](A3) at (1,0) {};
\node [label={[label distance=\y1 cm]\z1: $0100$},circle,fill=red,draw=black,scale=\x1](A4) at (2,1) {};
\node [label={[label distance=\y1 cm]\z1: $0010$},circle,fill=red,draw=black,scale=\x1](A5) at (1,3) {};
\node [label={[label distance=\y1 cm]\z1: $0011$},fill=blue,draw=black,scale=\x1](A6) at (0,2) {};
\node [label={[label distance=\y1 cm]\z1: $0111$},circle,fill=red,draw=black,scale=\x1](A7) at (1,1) {};
\node [label={[label distance=\y1 cm]\z1: $0110$},fill=blue,draw=black,scale=\x1](A8) at (2,2) {};
\node [label={[label distance=\y1 cm]\z1: $0002$},fill=blue,draw=black,scale=\x1](A9) at (-0.75,0.25) {};
\node [label={[label distance=\y1 cm]\z1: $0003$},circle,fill=red,draw=black,scale=\x1](A10) at (-1.5,-0.5) {};
\node [label={[label distance=\y1 cm]\z1: $0102$},circle,fill=red,draw=black,scale=\x1](A11) at (0.25,-0.75) {};
\node [label={[label distance=\y1 cm]\z1: $0103$},fill=blue,draw=black,scale=\x1](A12) at (-0.5,-1.5) {};
\node [label={[label distance=\y1 cm]\z1: $0021$},circle,fill=red,draw=black,scale=\x1](A13) at (-0.75,2.75) {};
\node [label={[label distance=\y1 cm]\z1: $0031$},fill=blue,draw=black,scale=\x1](A14) at (-1.5,3.5) {};
\node [label={[label distance=\y1 cm]\z1: $0020$},fill=blue,draw=black,scale=\x1](A15) at (0.25,3.75) {};
\node [label={[label distance=\y1 cm]\z1: $0030$},circle,fill=red,draw=black,scale=\x1](A16) at (-0.5,4.5) {};

\node [label={[label distance=\y1 cm]\z1: $1000$},xshift=\xs,yshift=\ys,circle,fill=red,draw=black,scale=\x1](A17) at (1,2) {};
\node [label={[label distance=\y1 cm]\z1: $1001$},xshift=\xs,yshift=\ys,fill=blue,draw=black,scale=\x1](A18) at (0,1) {};
\node [label={[label distance=\y1 cm]\z1: $1101$},xshift=\xs,yshift=\ys,circle,fill=red,draw=black,scale=\x1](A19) at (1,0) {};
\node [label={[label distance=\y1 cm]\z1: $1100$},xshift=\xs,yshift=\ys,fill=blue,draw=black,scale=\x1](A20) at (2,1) {};
\node [label={[label distance=\y1 cm]\z1: $1010$},xshift=\xs,yshift=\ys,fill=blue,draw=black,scale=\x1](A21) at (1,3) {};
\node [label={[label distance=\y1 cm]\z1: $1011$},xshift=\xs,yshift=\ys,circle,fill=red,draw=black,scale=\x1](A22) at (0,2) {};
\node [label={[label distance=\y1 cm]\z1: $1111$},xshift=\xs,yshift=\ys,fill=blue,draw=black,scale=\x1](A23) at (1,1) {};
\node [label={[label distance=\y1 cm]\z1: $1110$},xshift=\xs,yshift=\ys,circle,fill=red,draw=black,scale=\x1](A24) at (2,2) {};
\node [label={[label distance=\y1 cm]\z1: $1002$},xshift=\xs,yshift=\ys,circle,fill=red,draw=black,scale=\x1](A25) at (-0.75,0.25) {};
\node [label={[label distance=\y1 cm]\z1: $1003$},xshift=\xs,yshift=\ys,fill=blue,draw=black,scale=\x1](A26) at (-1.5,-0.5) {};
\node [label={[label distance=\y1 cm]\z1: $1102$},xshift=\xs,yshift=\ys,fill=blue,draw=black,scale=\x1](A27) at (0.25,-0.75) {};
\node [label={[label distance=\y1 cm]\z1: $1103$},xshift=\xs,yshift=\ys,circle,fill=red,draw=black,scale=\x1](A28) at (-0.5,-1.5) {};
\node [label={[label distance=\y1 cm]\z1: $1021$},xshift=\xs,yshift=\ys,fill=blue,draw=black,scale=\x1](A29) at (-0.75,2.75) {};
\node [label={[label distance=\y1 cm]\z1: $1031$},xshift=\xs,yshift=\ys,circle,fill=red,draw=black,scale=\x1](A30) at (-1.5,3.5) {};
\node [label={[label distance=\y1 cm]\z1: $1020$},xshift=\xs,yshift=\ys,circle,fill=red,draw=black,scale=\x1](A31) at (0.25,3.75) {};
\node [label={[label distance=\y1 cm]\z1: $1030$},xshift=\xs,yshift=\ys,fill=blue,draw=black,scale=\x1](A32) at (-0.5,4.5) {};

\node [label={[label distance=\y1 cm]\z1: $0201$},xshift=-0.5*\xs,yshift=-0.5*\ys,circle,fill=red,draw=black,scale=\x1](A33) at (1,0) {};
\node [label={[label distance=\y1 cm]\z1: $0200$},xshift=-0.5*\xs,yshift=-0.5*\ys,fill=blue,draw=black,scale=\x1](A34) at (2,1) {};
\node [label={[label distance=\y1 cm]\z1: $0211$},xshift=-0.5*\xs,yshift=-0.5*\ys,fill=blue,draw=black,scale=\x1](A35) at (1,1) {};
\node [label={[label distance=\y1 cm]\z1: $0210$},xshift=-0.5*\xs,yshift=-0.5*\ys,circle,fill=red,draw=black,scale=\x1](A36) at (2,2) {};
\node [label={[label distance=\y1 cm]\z1: $0202$},xshift=-0.5*\xs,yshift=-0.5*\ys,fill=blue,draw=black,scale=\x1](A37) at (0.25,-0.75) {};
\node [label={[label distance=\y1 cm]\z1: $0203$},xshift=-0.5*\xs,yshift=-0.5*\ys,circle,fill=red,draw=black,scale=\x1](A38) at (-0.5,-1.5) {};

\node [label={[label distance=\y1 cm]\z1: $0301$},xshift=-\xs,yshift=-\ys,fill=blue,draw=black,scale=\x1](A39) at (1,0) {};
\node [label={[label distance=\y1 cm]\z1: $0300$},xshift=-\xs,yshift=-\ys,circle,fill=red,draw=black,scale=\x1](A40) at (2,1) {};
\node [label={[label distance=\y1 cm]\z1: $0311$},xshift=-\xs,yshift=-\ys,circle,fill=red,draw=black,scale=\x1](A41) at (1,1) {};
\node [label={[label distance=\y1 cm]\z1: $0310$},xshift=-\xs,yshift=-\ys,fill=blue,draw=black,scale=\x1](A42) at (2,2) {};
\node [label={[label distance=\y1 cm]\z1: $0302$},xshift=-\xs,yshift=-\ys,circle,fill=red,draw=black,scale=\x1](A43) at (0.25,-0.75) {};
\node [label={[label distance=\y1 cm]\z1: $0303$},xshift=-\xs,yshift=-\ys,fill=blue,draw=black,scale=\x1](A44) at (-0.5,-1.5) {};

xshift=-\xs,yshift=-\ys,

\draw [line width=\w1 mm] (A1)--(A2)--(A3)--(A4)--(A1)--(A5)--(A6)--(A7)--(A8)--(A5)--(A15)--(A16)--(A14)--(A13)--(A6) (A2)--(A9)--(A10)--(A12)--(A11)--(A3) (A2)--(A6) (A3)--(A7)  (A4)--(A8)  (A9)--(A11) (A13)--(A15)  (A17)--(A18)--(A19)--(A20)--(A17)--(A21)--(A22)--(A23)--(A24)--(A21)--(A31)--(A32)--(A30)--(A29)--(A22) (A18)--(A25)--(A26)--(A28)--(A27)--(A19) (A18)--(A22) (A19)--(A23)  (A20)--(A24)  (A25)--(A27) (A29)--(A31)  (A33)--(A34)--(A36)--(A35)--(A33)--(A37)--(A38) (A39)--(A40)--(A42)--(A41)--(A39)--(A43)--(A44) ; 

\draw [line width=\w1 mm,dashed, opacity=0.2] (A1)--(A17) (A2)--(A18) (A19)--(A3)--(A33)--(A39) (A20)--(A4)--(A34)--(A40) (A5)--(A21)  (A6)--(A22) (A23)--(A7)--(A35)--(A41) (A24)--(A8)--(A36)--(A42)  (A25)--(A9) (A26)--(A10) (A27)--(A11)--(A37)--(A43) (A28)--(A12)--(A38)--(A44) (A13)--(A29) (A14)--(A30) (A15)--(A31) (A16)--(A32);
\end{tikzpicture}  
\caption{The canonical decomposition and a proper $2$-coloring of $\Pi_4^{2,2}=\Pi_3^{2,2}\oplus\Pi_3^{2,2}\oplus\Pi_2^{2,2}\oplus\Pi_2^{2,2}$.} \label{fig:canon dec and coloring of Pi_4^{2,2}}
\end{figure}

We now present another decomposition of the Horadam cubes.
To this end, recall that $P_a\square P_b$ denotes the Cartesian product of paths $P_a$ and $P_b$, and $P_a^k$ denotes the Cartesian product of path $P_a$ with itself $k$ times, that is, $P_{a}\square P_{a}\cdots \square P_{a}$. Also, note that
$|V(P_a^k\square P^m_b)|=a^kb^m$. The Cartesian products of the path graphs are called {\em grids} or {\em lattices}.
The following theorem illustrates how the combinatorial meaning of the Fibonacci 
polynomials of the identity (\ref{eq:horadam_number of vertices}) extends to the Horadam cubes. Before we state the next theorem, we need to define a map $\rho$. 
Let $\mathcal{F}_n$ be the set that contains all Fibonacci strings of length $n$ and recall that $|\mathcal{F}_n|=F_{n+2}$. We define $\rho:V(\Pi^{a,b}_n)\to\mathcal{F}_n$ to be the map on the alphabet
$\left\lbrace0,1,\dots,a+b-1\right\rbrace$ and extended to $V(\Pi^{a,b}_n)$ by
concatenation, as follows
$$\rho(\alpha)=\begin{cases} 0, & 0\leq \alpha \leq a-1,\\
		1, & \alpha\geq a.\end{cases} $$ 
For a Fibonacci string $w$, let $\rho^{-1}(w)$ denote the subgraph of
$\Pi^{a,b}_n$ induced by vertices
$\left\lbrace v\in V(\Pi^{a,b}_n)| \rho(v)=w\right\rbrace$.

\begin{theorem}
The Horadam cube $\Pi^{a,b}_n$ can be decomposed into $F_{n+1}$ grids, where $F_n$ denotes the $n$-th Fibonacci number. 
If $\Pi^{a,b}_n/\rho$ denotes the quotient graph of $\Pi^{a,b}_n$
obtained by identifying all vertices which are identified by $\rho$, and
two vertices $w_1$ and $w_2$ in $\Pi^{a,b}_n/\rho$ are adjacent if there is at
least one edge in $\Pi^{a,b}_n$ connecting blocks $\rho^{-1}(w_1)$ and
$\rho^{-1}(w_2)$. Then $\Pi^{a,b}_n/\rho$ is isomorphic to the Fibonacci
cube $\Gamma_{n-1}$. \label{tm:decomposition II} \end{theorem}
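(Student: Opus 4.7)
The plan is to analyze the map $\rho$ directly. First I would identify its image: if $\alpha_i\geq a$ then admissibility forces $\alpha_{i-1}=0$, hence $\rho(\alpha)$ has no two consecutive $1$s and its first coordinate is $0$, and conversely every such Fibonacci string is realized. Via the trivial bijection $0w'\leftrightarrow w'$, the image is in bijection with $\mathcal{F}_{n-1}$, giving $F_{n+1}$ fibers, as required.

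Second, I would describe a typical fiber. Fix a Fibonacci string $w$ of length $n$ starting with $0$ and containing $k$ ones, and partition its positions into three types: the $k$ positions with $w_i=1$, in which $\alpha_i$ is free in $\{a,\ldots,a+b-1\}$ ($b$ choices); the $k$ positions with $w_i=0$ and $w_{i+1}=1$, in which admissibility forces $\alpha_i=0$; and the remaining $n-2k$ positions with $w_i=0$ and either $i=n$ or $w_{i+1}=0$, in which $\alpha_i$ is free in $\{0,\ldots,a-1\}$ ($a$ choices). Since the coordinates vary independently and edges of $\Pi^{a,b}_n$ internal to the fiber change one coordinate by one without crossing the $a{-}1\mid a$ boundary, the induced subgraph $\rho^{-1}(w)$ is isomorphic to $P_a^{n-2k}\square P_b^k$, a grid. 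Summing $\binom{n-k}{k}a^{n-2k}b^k$ over $k$ recovers $s^{a,b}_n$, consistent with (\ref{eq:horadam_number of vertices}).

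Third, for the quotient claim, I would observe that an edge of $\Pi^{a,b}_n$ joining distinct fibers must arise from flipping some $\alpha_i=a-1$ into $\beta_i=a$ or vice versa, so $\rho(\alpha)$ and $\rho(\beta)$ differ in exactly one position. Conversely, if two Fibonacci strings $w_1,w_2$ of length $n$ starting with $0$ differ only at position $i$ (say $w_{1,i}=0$ and $w_{2,i}=1$), the Fibonacci property of $w_2$ guarantees $w_{1,i-1}=0$ and $w_{1,i+1}=0$ (the latter vacuous when $i=n$), so position $i$ is of the free type in the fiber over $w_1$; taking $\alpha\in\rho^{-1}(w_1)$ with $\alpha_{i-1}=0$ and $\alpha_i=a-1$, and defining $\beta$ by $\beta_i=a$ and $\beta_j=\alpha_j$ otherwise, yields an edge joining the fibers. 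Composing with $0w'\leftrightarrow w'$ turns the resulting adjacency into the standard Hamming adjacency on $\mathcal{F}_{n-1}$, so $\Pi^{a,b}_n/\rho\cong \Gamma_{n-1}$.

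The main obstacle is the bookkeeping in the second step: verifying that the admissibility constraints interact cleanly enough with the edge relation to give exactly the Cartesian product of paths, with no phantom edges and none missing. The key observation that makes everything mesh is that the positions contributing a forced singleton factor are precisely those immediately preceding a $1$ in $w$, and this same structural fact is what guarantees in the third step that the inter-fiber flip can always be realized.
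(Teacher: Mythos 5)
Your proposal is correct and follows essentially the same route as the paper's proof: identify the image of $\rho$ as the Fibonacci strings of length $n$ beginning with $0$ (hence $F_{n+1}$ fibers), show each fiber induces the grid $P_a^{n-2k}\square P_b^k$, and characterize inter-fiber edges as exactly the $a-1\leftrightarrow a$ flips so that the quotient carries the Hamming adjacency of $\Gamma_{n-1}$. Your version is somewhat more explicit about the bookkeeping (the three types of positions and the realizability of the connecting edge), but the underlying argument is the same.
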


\begin{proof} Let $w$ be any Fibonacci string  that contains $k$ ones. Since every letter smaller than or equal to $a-1$ maps into $0$, and every block $0l$ for $a\leq l\leq a+b-1$ maps into the block $01$, the set $\rho^{-1}(w)$ contains all vertices that contain some block $0l$ for $a\leq l\leq a+b-1$ in the same places where $w$ has the block $01$. The remaining positions can be filled with any letter between $0$ and $a-1$. Thus, the set $\rho^{-1}(w)$ induces a subgraph of $\Pi^{a,b}_n$ isomorphic to $P_a^{n-2k}P^k_b$. Also note that $\rho$ maps $V(\Pi^{a,b}_n)$ into Fibonacci strings that start with $0$, and that any Fibonacci string of length $n$ that starts with $0$ can be realized as the image of some string from $V(\Pi^{a,b}_n)$. More precisely, for an arbitrary Fibonacci string $w$, we can take, for example, the string $v$ that contains block $0a$ in all the same positions where the string $w$ contains the block $01$, and the remaining positions are filled with $0$. Thus, the image of the map $\rho$ contains all Fibonacci strings of length $n$ that start with $0$, and there are $F_{n+1}$ such strings. Hence, $|\rho(V(\Pi^{a,b}_n))|=F_{n+1}$. Furthermore, for every two different $w,v\in\mathcal{F}_n$, subgraphs $\rho^{-1}(w)$ and $\rho^{-1}(v)$ are vertex disjoint. That gives a decomposition into $F_{n+1}$ grids, namely for each Fibonacci string $w$, we have the grid $\rho^{-1}(w)$. If two Fibonacci strings $w_1$ and $w_2$ differ in only one position then there is at least one edge connecting the grids $\rho^{-1}(w_1)$ and $\rho^{-1}(w_2)$. Indeed, without loss of generality, let $w_1$ be the Fibonacci string with $1$ on some position $l$, and $w_2$ be the same string except for $0$ on position $l$. Now, let $v_1$ be the vertex in $\Pi^{a,b}_n$ that has the letter $a$ at the same positions where $w_1$ has ones, and $v_2$ be the same string as $v_1$ except for $a-1$ on position $l$. Then $\rho(v_1)=w_1$ and $\rho(v_2)=w_2$, and the edge between the grids $\rho^{-1}(w_1)$ and $\rho^{-1}(w_2)$ is the edge connecting $v_1$ and $v_2$. On the other hand, if two grids $\rho^{-1}(w_1)$ and $\rho^{-1}(w_2)$ have an edge connecting them, it is clear that $w_1$ and $w_2$ must differ in only one position. The claim follows.
\end{proof}

Figure \ref{fig:dec of Pi_3^(3,2) and Jacobsthal} shows the decomposition of $\Pi_3^{3,2}$ and the Jacobsthal cube $\Pi_5^{1,2}$. Note that $\Pi_3^{3,2}/\rho=\Gamma_{2}$ and $\Pi_5^{1,2}/\rho=\Gamma_4$.

\begin{figure}[h!] \centering \begin{tikzpicture}[scale=0.6]
\tikzmath{\x1 = 0.35; \y1 =-0.05; \z1=180; \w1=0.2; \xs=-8; \ys=0; \yss=-5;
\x2 = \x1 + 1; \y2 =\y1 +3; } 
\small
\node [label={[label distance=\y1 cm]\z1: $000$},circle,fill=blue,draw=black,scale=\x1](A1) at (0,4) {};
\node [label={[label distance=\y1 cm]\z1: $010$},circle,fill=blue,draw=black,scale=\x1](A2) at (0,5) {};
\node [label={[label distance=\y1 cm]\z1: $020$},circle,fill=blue,draw=black,scale=\x1](A3) at (0,6) {};
\node [label={[label distance=\y1 cm]\z1: $001$},circle,fill=blue,draw=black,scale=\x1](A4) at (-1.5,2.5) {};
\node [label={[label distance=\y1 cm]\z1: $011$},circle,fill=blue,draw=black,scale=\x1](A5) at (-1.5,3.5) {};
\node [label={[label distance=\y1 cm]\z1: $021$},circle,fill=blue,draw=black,scale=\x1](A6) at (-1.5,4.5) {};
\node [label={[label distance=\y1 cm]\z1: $002$},circle,fill=blue,draw=black,scale=\x1](A7) at (-3,1) {};
\node [label={[label distance=\y1 cm]\z1: $012$},circle,fill=blue,draw=black,scale=\x1](A8) at (-3,2) {};
\node [label={[label distance=\y1 cm]\z1: $022$},circle,fill=blue,draw=black,scale=\x1](A9) at (-3,3) {};
\node [label={[label distance=\y1 cm]\z1: $100$},circle,fill=blue,draw=black,scale=\x1](A10) at (1.5,2.5) {};
\node [label={[label distance=\y1 cm]\z1: $110$},circle,fill=blue,draw=black,scale=\x1](A11) at (1.5,3.5) {};
\node [label={[label distance=\y1 cm]\z1: $120$},circle,fill=blue,draw=black,scale=\x1](A12) at (1.5,4.5) {};
\node [label={[label distance=\y1 cm]\z1: $101$},circle,fill=blue,draw=black,scale=\x1](A13) at (0,1) {};
\node [label={[label distance=\y1 cm]\z1: $111$},circle,fill=blue,draw=black,scale=\x1](A14) at (0,2) {};
\node [label={[label distance=\y1 cm]\z1: $121$},circle,fill=blue,draw=black,scale=\x1](A15) at (0,3) {};
\node [label={[label distance=\y1 cm]\z1: $102$},circle,fill=blue,draw=black,scale=\x1](A16) at (-1.5,-0.5) {};
\node [label={[label distance=\y1 cm]\z1: $112$},circle,fill=blue,draw=black,scale=\x1](A17) at (-1.5,0.5) {};
\node [label={[label distance=\y1 cm]\z1: $122$},circle,fill=blue,draw=black,scale=\x1](A18) at (-1.5,1.5) {};
\node [label={[label distance=\y1 cm]\z1: $200$},circle,fill=blue,draw=black,scale=\x1](A19) at (3,1) {};
\node [label={[label distance=\y1 cm]\z1: $210$},circle,fill=blue,draw=black,scale=\x1](A20) at (3,2) {};
\node [label={[label distance=\y1 cm]\z1: $220$},circle,fill=blue,draw=black,scale=\x1](A21) at (3,3) {};
\node [label={[label distance=\y1 cm]\z1: $201$},circle,fill=blue,draw=black,scale=\x1](A22) at (1.5,-0.5) {};
\node [label={[label distance=\y1 cm]\z1: $211$},circle,fill=blue,draw=black,scale=\x1](A23) at (1.5,0.5) {};
\node [label={[label distance=\y1 cm]\z1: $221$},circle,fill=blue,draw=black,scale=\x1](A24) at (1.5,1.5) {};
\node [label={[label distance=\y1 cm]\z1: $202$},circle,fill=blue,draw=black,scale=\x1](A25) at (0,-2) {};
\node [label={[label distance=\y1 cm]\z1: $212$},circle,fill=blue,draw=black,scale=\x1](A26) at (0,-1) {};
\node [label={[label distance=\y1 cm]\z1: $222$},circle,fill=blue,draw=black,scale=\x1](A27) at (0,0) {};
\node [label={[label distance=\y1 cm]\z1: $003$},circle,fill=blue,draw=black,scale=\x1](A28) at (-4.5,-0.5) {};
\node [label={[label distance=\y1 cm]\z1: $103$},circle,fill=blue,draw=black,scale=\x1](A29) at (-3,-2) {};
\node [label={[label distance=\y1 cm]\z1: $203$},circle,fill=blue,draw=black,scale=\x1](A30) at (-1.5,-3.5) {};
\node [label={[label distance=\y1 cm]\z1: $030$},circle,fill=blue,draw=black,scale=\x1](A31) at (-1.5,7.5) {};
\node [label={[label distance=\y1 cm]\z1: $031$},circle,fill=blue,draw=black,scale=\x1](A32) at (-3,6) {};
\node [label={[label distance=\y1 cm]\z1: $032$},circle,fill=blue,draw=black,scale=\x1](A33) at (-4.5,4.5) {};
\node [label={[label distance=\y1 cm]\z1: $040$},circle,fill=blue,draw=black,scale=\x1](A34) at (-3,9) {};
\node [label={[label distance=\y1 cm]\z1: $041$},circle,fill=blue,draw=black,scale=\x1](A35) at (-4.5,7.5) {};
\node [label={[label distance=\y1 cm]\z1: $042$},circle,fill=blue,draw=black,scale=\x1](A36) at (-6,6) {};
\node [label={[label distance=\y1 cm]\z1: $004$},circle,fill=blue,draw=black,scale=\x1](A37) at (-6,-2) {};
\node [label={[label distance=\y1 cm]\z1: $104$},circle,fill=blue,draw=black,scale=\x1](A38) at (-4.5,-3.5) {};
\node [label={[label distance=\y1 cm]\z1: $204$},circle,fill=blue,draw=black,scale=\x1](A39) at (-3,-5) {};

\draw [line width=\w1 mm] (A1)--(A2)--(A3) (A4)--(A5)--(A6) (A7)--(A8)--(A9) (A10)--(A11)--(A12) (A13)--(A14)--(A15) (A16)--(A17)--(A18) (A19)--(A20)--(A21) (A22)--(A23)--(A24) (A25)--(A26)--(A27) (A1)--(A4)--(A7) (A2)--(A5)--(A8) (A3)--(A6)--(A9) (A10)--(A13)--(A16) (A11)--(A14)--(A17) (A12)--(A15)--(A18) (A19)--(A22)--(A25) (A20)--(A23)--(A26) (A21)--(A24)--(A27) (A31)--(A32)--(A33) (A34)--(A35)--(A36) (A28)--(A37) (A29)--(A38) (A30)--(A39) (A1)--(A10)--(A19) (A2)--(A11)--(A20) (A3)--(A12)--(A21) (A4)--(A13)--(A22) (A5)--(A14)--(A23) (A6)--(A15)--(A24) (A7)--(A16)--(A25) (A8)--(A17)--(A26) (A9)--(A18)--(A27)  (A28)--(A29)--(A30) (A37)--(A38)--(A39) (A31)--(A34) (A32)--(A35) (A33)--(A36); 

\draw [line width=\w1 mm,dashed, opacity=0.2] (A31)--(A3) (A32)--(A6) (A33)--(A9) (A7)--(A28) (A16)--(A29) (A25)--(A30); 

\end{tikzpicture}\centering \raisebox{0.3\height}{\begin{tikzpicture}[scale=0.6]
\tikzmath{\x1 = 0.35; \y1 =-0.05; \z1=90; \w1=0.2;   \xs=35; \ys=0;
\x2 = \x1 + 1; \y2 =\y1 +3; } 
\small

\node [label={[label distance=\y1 cm]\z1: $00000$},circle,fill=blue,draw=black,scale=\x1](A3) at (0,6) {};
\node [label={[label distance=\y1 cm]\z1: $00001$},circle,fill=blue,draw=black,scale=\x1](A2) at (1.5,7.5) {};
\node [label={[label distance=\y1 cm]\z1: $00002$},circle,fill=blue,draw=black,scale=\x1](A1) at (3,9) {};
\node [label={[label distance=\y1 cm]\z1: $00010$},circle,fill=blue,draw=black,scale=\x1](A4) at (1.5,4.5) {};
\node [label={[label distance=\y1 cm]\z1: $00020$},circle,fill=blue,draw=black,scale=\x1](A5) at (3,3) {};
\node [label={[label distance=\y1 cm]\z1: $00100$},circle,fill=blue,draw=black,scale=\x1](A6) at (-1.5,7.5) {};
\node [label={[label distance=\y1 cm]\z1: $00101$},circle,fill=blue,draw=black,scale=\x1](A7) at (0,9) {};
\node [label={[label distance=\y1 cm]\z1: $00102$},circle,fill=blue,draw=black,scale=\x1](A8) at (1.5,10.5) {};
\node [label={[label distance=\y1 cm]\z1: $00200$},circle,fill=blue,draw=black,scale=\x1](A9) at (-3,9) {};
\node [label={[label distance=\y1 cm]\z1: $00201$},circle,fill=blue,draw=black,scale=\x1](A10) at (-1.5,10.5) {};
\node [label={[label distance=\y1 cm]\z1: $00202$},circle,fill=blue,draw=black,scale=\x1](A11) at (0,12) {};

\node [label={[label distance=\y1 cm]\z1: $01000$},xshift=\xs,yshift=\ys,circle,fill=blue,draw=black,scale=\x1](A14) at (0,6) {};
\node [label={[label distance=\y1 cm]\z1: $01001$},xshift=\xs,yshift=\ys,circle,fill=blue,draw=black,scale=\x1](A13) at (1.5,7.5) {};
\node [label={[label distance=\y1 cm]\z1: $01002$},xshift=\xs,yshift=\ys,circle,fill=blue,draw=black,scale=\x1](A12) at (3,9) {};
\node [label={[label distance=\y1 cm]\z1: $01010$},xshift=\xs,yshift=\ys,circle,fill=blue,draw=black,scale=\x1](A15) at (1.5,4.5) {};
\node [label={[label distance=\y1 cm]\z1: $01020$},xshift=\xs,yshift=\ys,circle,fill=blue,draw=black,scale=\x1](A16) at (3,3) {};

\node [label={[label distance=\y1 cm]\z1: $02000$},xshift=2*\xs,yshift=2*\ys,circle,fill=blue,draw=black,scale=\x1](A19) at (0,6) {};
\node [label={[label distance=\y1 cm]\z1: $02001$},xshift=2*\xs,yshift=2*\ys,circle,fill=blue,draw=black,scale=\x1](A18) at (1.5,7.5) {};
\node [label={[label distance=\y1 cm]\z1: $02002$},xshift=2*\xs,yshift=2*\ys,circle,fill=blue,draw=black,scale=\x1](A17) at (3,9) {};
\node [label={[label distance=\y1 cm]\z1: $02010$},xshift=2*\xs,yshift=2*\ys,circle,fill=blue,draw=black,scale=\x1](A20) at (1.5,4.5) {};
\node [label={[label distance=\y1 cm]\z1: $02020$},xshift=2*\xs,yshift=2*\ys,circle,fill=blue,draw=black,scale=\x1](A21) at (3,3) {};
\draw [line width=\w1 mm] (A1)--(A2)  (A4)--(A5) (A7)--(A8) (A10)--(A11)  (A6)--(A9) (A7)--(A10) (A8)--(A11) (A12)--(A13)  (A15)--(A16) (A17)--(A18)  (A20)--(A21)  (A12)--(A17) (A13)--(A18) (A14)--(A19) (A15)--(A20) (A16)--(A21); 
\draw [line width=\w1 mm,dashed, opacity=0.2] (A9)--(A10) (A6)--(A7) (A2)--(A7) (A1)--(A8) (A1)--(A12) (A2)--(A13) (A2)--(A3)  (A3)--(A6) (A3)--(A4) (A4)--(A15) (A5)--(A16) (A3)--(A14) (A13)--(A14) (A18)--(A19) (A14)--(A15) (A19)--(A20); 
\end{tikzpicture}}
\caption{The decomposition of $\Pi_3^{3,2}=P^{3}_3\oplus 2P_2\square P_3$ and Jacobsthal cube $\Pi_5^{1,2}=P^{5}_1\oplus 4 P^3_1\square P_2\oplus 3P^2_2$.} \label{fig:dec of Pi_3^(3,2) and Jacobsthal}
\end{figure}
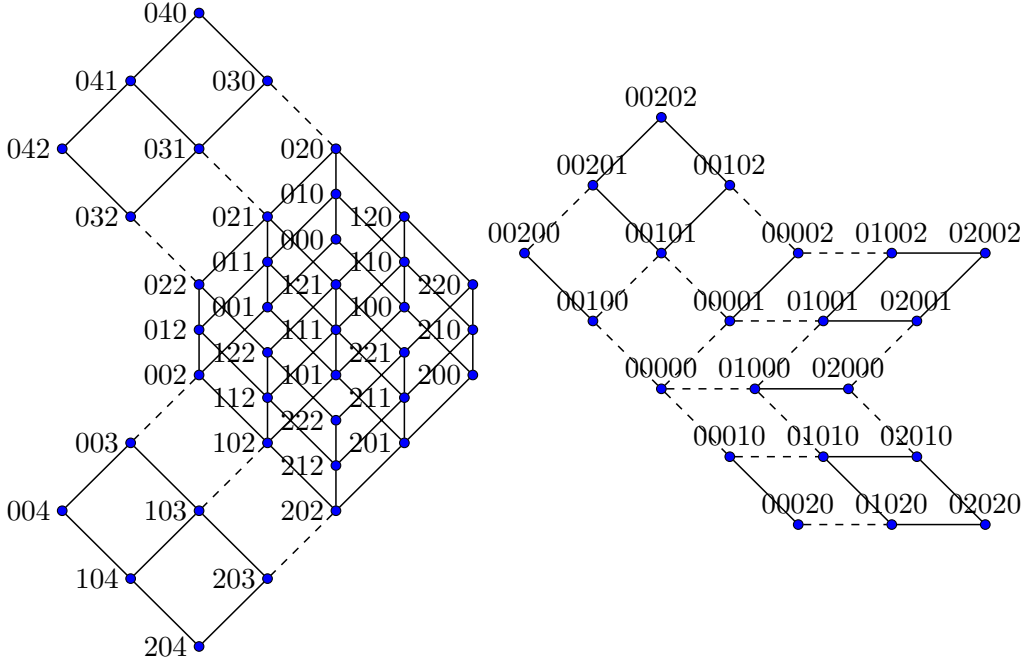


\section{Number of edges}

The canonical decomposition from Theorem \ref{tm:candec_hor} is a useful tool to obtain the recursive relation for the number of edges. Let $e^{a,b}_n$ denote the number of edges in the Horadam cube $\Pi^{a,b}_n$.
Since $\Pi^{a,b}_1=P_a$ and $\Pi^{a,b}_2$ is $a\times a$ grid with the path $P_b$ appended to the vertex $0(a-1)$, we have $e_1^{a,b}=a-1$ and $e_2^{a,b}=2a^2-2a+b$. Using the same approach as with the metallic cubes, one can easily see that the graph $\Pi^{a,b}_n$ contains $a$ copies of the graph $\Pi^{a,b}_{n-1}$ and $b$ copies of the graph $\Pi^{a,b}_{n-2}$. Thus, the contribution of those subgraphs to the number of edges is $ae_{n-1}^{a,b}+be_{n-2}^{a,b}$. Furthermore, there are $(a-1)s^{a,b}_{n-1}$ edges connecting $a$ copies of the graph $\Pi^{a,b}_{n-1}$ and $bs^{a,b}_{n-2}$ edges connecting $b$ copies of the graph $\Pi^{a,b}_{n-2}$. This simple analysis shows that the recursive relation for the number of edges of the metallic cubes naturally expands to the number of edges of the Horadam cubes. We have
\begin{align}\label{eq:Horadam_number_of edges_recursive}
e^{a,b}=ae_{n-1}^{a,b}+be_{n-2}^{a,b}+s^{a,b}_n-s^{a,b}_{n-1}.
\end{align}  

Recurrence (\ref{eq:Horadam_number_of edges_recursive}) immediately yields the generating function $E(x)=\sum_{n\geq 0}e^{a,b}_nx^n$ for the number of edges in Horadam cubes: \begin{align}\label{eq:hor_number_of_edg_gen}
E(x)=\dfrac{(a-1)x+bx^2}{(1-ax-bx^2)^2}.
\end{align}
Using recurrence (\ref{eq:Horadam_number_of edges_recursive}), we list the first values of the sequence $e^{a,b}_n$ in Table \ref{table:horadam_number of edges}.
\begin{table}[h]\centering$\begin{array}{r|l}
n & e^a_n\\
\hline
1 & a-1\\
2 & 2a^2-2a+b\\
3 & 3a^3-3a^2+4ab-2b\\
4 & 4a^4-4a^3+9a^2b-6ab+2b^2\\
5 & 5a^5-5a^4+16a^3b-12a^2b+9ab^2-3b^2\\
6 & 6a^6-6a^5+25a^4b-20a^3b+24a^2b^2-12ab^2+3b^3
\end{array}$\caption{The number of edges in the Horadam cubes.}\label{table:horadam_number of edges}
\end{table}

Now we can state the theorem that expresses the numbers $e^{a,b}_n$ in terms of the number of vertices in the Horadam cubes.

\begin{theorem}\label{tm:horadam_number_of_edges_using_s_n}
The number of edges in the Horadam cube $\Pi^{a,b}_n$ is
\begin{equation*}
e^{a,b}_n=\sum_{k=0}^{n-1} s^{a,b}_k \left(s^{a,b}_{n-k}-s^{a,b}_{n-1-k}\right).
\end{equation*} 
\end{theorem}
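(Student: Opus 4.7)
The plan is to prove the identity by generating functions, using the closed forms already derived in the paper: $S(x) = \frac{1}{1-ax-bx^2}$ from (\ref{eq:Horadam_seq_generating_function}) and $E(x) = \frac{(a-1)x+bx^2}{(1-ax-bx^2)^2}$ from (\ref{eq:hor_number_of_edg_gen}). The key observation I would make is the algebraic identity
\[
(a-1)x + bx^2 = (ax+bx^2) - x = \bigl(1-(1-ax-bx^2)\bigr) - x = (1-x) - (1-ax-bx^2),
\]
so that the numerator of $E(x)$ splits nicely over the denominator.

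From that identity, I would divide through by $(1-ax-bx^2)^2$ to obtain
\[
E(x) \;=\; \frac{1-x}{(1-ax-bx^2)^{2}} \;-\; \frac{1}{1-ax-bx^2} \;=\; (1-x)\,S(x)^{2} \;-\; S(x).
\]
This is the main identity powering the argument. I would then extract the coefficient of $x^n$ from both sides. On the left we get $e^{a,b}_n$ by definition. On the right, the coefficient of $x^n$ in $S(x)^2$ is the convolution $\sum_{k=0}^{n} s^{a,b}_k s^{a,b}_{n-k}$, so the coefficient of $x^n$ in $(1-x)S(x)^2$ equals $\sum_{k=0}^{n} s^{a,b}_k s^{a,b}_{n-k} - \sum_{k=0}^{n-1} s^{a,b}_k s^{a,b}_{n-1-k}$, and we also subtract $s^{a,b}_n$ coming from $-S(x)$.

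Finally, I would reorganize this expression: using $s^{a,b}_0 = 1$, the term $k=n$ in the first convolution contributes $s^{a,b}_n$, which exactly cancels the subtracted $s^{a,b}_n$. The remaining two sums run over $k = 0, \ldots, n-1$ and combine as
\[
\sum_{k=0}^{n-1} s^{a,b}_k s^{a,b}_{n-k} - \sum_{k=0}^{n-1} s^{a,b}_k s^{a,b}_{n-1-k} \;=\; \sum_{k=0}^{n-1} s^{a,b}_k \bigl(s^{a,b}_{n-k} - s^{a,b}_{n-1-k}\bigr),
\]
which is exactly the claimed formula. I would also note the initial-value check (e.g., $n=1,2$) against Table \ref{table:horadam_number of edges} as a sanity check. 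There is no real obstacle here; the only subtle step is spotting the decomposition $(a-1)x+bx^2 = (1-x)-(1-ax-bx^2)$, after which everything is a routine coefficient extraction.
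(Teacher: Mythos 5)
Your proof is correct and follows essentially the same route as the paper: both start from the generating functions $S(x)$ and $E(x)$ and extract the coefficient of $x^n$ from $\left((a-1)x+bx^2\right)S(x)^2$. The only difference is cosmetic — you rewrite the numerator as $(1-x)-(1-ax-bx^2)$ so that $E(x)=(1-x)S(x)^2-S(x)$ and the recurrence never needs to be invoked explicitly, whereas the paper splits the numerator as $ax+bx^2-x$ and recombines $as^{a,b}_{n-1-k}+bs^{a,b}_{n-2-k}$ into $s^{a,b}_{n-k}$ at the coefficient level; the two computations are the same argument in a different order.
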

\begin{proof}
Using generating functions (\ref{eq:Horadam_seq_generating_function}) and (\ref{eq:hor_number_of_edg_gen}) and the fact that $$ S^2(x)=\sum_{n\geq 0}\sum_{k=0}^n s^{a,b}_k s^{a,b}_{n-k}x^n$$ we have \begin{align*}
E(x)&=\dfrac{(a-1)x+bx^2}{(1-ax-bx^2)^2}\\
&=((a-1)x+bx^2)S^2(x)\\
&=a\sum_{n\geq 0}\sum_{k=0}^n s^{a,b}_k s^{a,b}_{n-k}x^{n+1}+b\sum_{n\geq 0}\sum_{k=0}^n s^{a,b}_k s^{a,b}_{n-k}x^{n+2}-\sum_{n\geq 0}\sum_{k=0}^n s^{a,b}_k s^{a,b}_{n-k}x^{n+1}\\
&=a\sum_{n\geq 1}\sum_{k=0}^{n-1} s^{a,b}_k s^{a,b}_{n-1-k}x^{n}+b\sum_{n\geq 2}\sum_{k=0}^{n-2} s^{a,b}_k s^{a,b}_{n-2-k}x^{n}-\sum_{n\geq 1}\sum_{k=0}^{n-1} s^{a,b}_k s^{a,b}_{n-1-k}x^{n}\\
&=\sum_{n\geq 0}\sum_{k=0}^{n-1} s^{a,b}_k \left(as^{a,b}_{n-1-k}+b s^{a,b}_{n-2-k}-s^{a,b}_{n-1-k}\right)x^n\\
&=\sum_{n\geq 0}\sum_{k=0}^{n-1} s^{a,b}_k \left(s^{a,b}_{n-k}-s^{a,b}_{n-1-k}\right)x^n. 
\end{align*}    \end{proof}

For $a=b=1$ we have $s^{1,1}_{n}=F_{n+1}$ and \begin{align*} e^{1,1}_n&=\sum_{k=0}^{n-1} s^{1,1}_k \left(s^{1,1}_{n-k}-s^{1,1}_{n-1-k}\right)\\ &=\sum_{k=0}^{n-1} s^{1,1}_k s^{1,1}_{n-k-2}\\
&=\sum_{k=0}^{n} F_{k} F_{n-k}, \end{align*} which immediately retrieves the result on the number of edges in the Fibonacci cubes \cite{Klavzar}. Theorem \ref{tm:horadam_number_of_edges_using_s_n} allows us to extend this result for $a=1$ and $b\geq 1$ and we obtain \begin{align*}
e^{1,b}_n=b\sum\limits_{k=0}^{n-1}s^{1,b}_{k-1}s^{1,b}_{n-k-1}.
\end{align*}  In particular, for the Jacobsthal cubes we obtained the sequence \seqnum{A095977}: $$e^{1,2}_n=2\sum_{k=0}^nJ_kJ_{n-k}.$$

Another way to express the number of edges is by using binomial coefficients. 

\begin{theorem}The number of edges in the Horadam cube $\Pi^{a,b}_n$ is
\begin{equation*}
e^{a,b}_n=\sum_{k=0}^n(-1)^{n-k}\ceil*{\frac{n+k}{2}}\binom{\floor*{\frac{n+k}{2}}}{k}a^kb^{\floor*{\frac{n-k}{2}}}.
\end{equation*} 
\end{theorem}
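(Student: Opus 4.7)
The plan is to derive the claimed closed form directly from the generating function $E(x)=\dfrac{(a-1)x+bx^2}{(1-ax-bx^2)^2}$ established in (\ref{eq:hor_number_of_edg_gen}), by first extracting a clean expression for the coefficients of $S(x)^2$ and then matching it, term by term, against the alternating sum on the right-hand side.

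First, I would expand $S(x)^2=(1-ax-bx^2)^{-2}$. Using $(1-y)^{-2}=\sum_{k\ge 0}(k+1)y^k$ with $y=ax+bx^2$, followed by the binomial theorem on $(ax+bx^2)^k=\sum_{j=0}^{k}\binom{k}{j}a^{k-j}b^{j}x^{k+j}$, and then collecting exponents via the substitution $m=k+j$, one obtains
\begin{equation*}
f_m:=[x^m]S(x)^2=\sum_{j=0}^{\lfloor m/2\rfloor}(m-j+1)\binom{m-j}{j}a^{m-2j}b^{j}.
\end{equation*}
Since $E(x)=((a-1)x+bx^2)S(x)^2$, we get $e^{a,b}_n=(a-1)f_{n-1}+bf_{n-2}$. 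Writing this as $af_{n-1}+bf_{n-2}-f_{n-1}$ and merging the first two sums with Pascal's identity $\binom{n-1-j}{j}+\binom{n-1-j}{j-1}=\binom{n-j}{j}$ (after the re-indexing $j\mapsto j-1$ in the $bf_{n-2}$ part), I would obtain the intermediate form
\begin{equation*}
e^{a,b}_n=\sum_{j=0}^{\lfloor n/2\rfloor}(n-j)\binom{n-j}{j}a^{n-2j}b^{j}\;-\;\sum_{j=0}^{\lfloor (n-1)/2\rfloor}(n-j)\binom{n-1-j}{j}a^{n-1-2j}b^{j}.
\end{equation*}

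The remaining step is to identify this with the target formula. I would split the target sum according to the parity of $n-k$. When $n-k$ is even, both $\lceil(n+k)/2\rceil$ and $\lfloor(n+k)/2\rfloor$ equal $(n+k)/2$, and the substitution $k=n-2j$ transforms that part of the target into $\sum_{j=0}^{\lfloor n/2\rfloor}(n-j)\binom{n-j}{j}a^{n-2j}b^{j}$, which is exactly the first sum above. When $n-k$ is odd the sign $(-1)^{n-k}=-1$, $\lceil(n+k)/2\rceil=(n+k+1)/2$, $\lfloor(n+k)/2\rfloor=(n+k-1)/2$, and $\lfloor(n-k)/2\rfloor=(n-k-1)/2$; the substitution $k=n-2j-1$ converts this part to $-\sum_{j=0}^{\lfloor(n-1)/2\rfloor}(n-j)\binom{n-j-1}{j}a^{n-1-2j}b^{j}$, matching the second sum.

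The main obstacle is bookkeeping: keeping the floor/ceiling/parity substitutions straight and verifying that the index ranges agree after the change of variable (in particular that the cases $k=n$ and $k=0$ in the target give the leading $a^n$-term and the pure $b$-power term correctly, and that no boundary index is lost when one of $\lfloor n/2\rfloor$, $\lfloor(n-1)/2\rfloor$ is attained). There is no genuine difficulty beyond this; the heart of the argument is the clean expansion of $S(x)^2$ via $(1-y)^{-2}$ plus Pascal's identity.
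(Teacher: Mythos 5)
Your proposal is correct, and it takes a genuinely different route from the paper. The paper proves the identity by induction on $n$: it rewrites $s^{a,b}_n-s^{a,b}_{n-1}$ as the alternating single sum $\sum_{k}(-1)^{n-k}\binom{\lfloor (n+k)/2\rfloor}{k}a^k b^{\lfloor (n-k)/2\rfloor}$, computes $a\,e^{a,b}_{n-1}+b\,e^{a,b}_{n-2}$ from the inductive hypothesis via index shifts and Pascal's rule, and adds the two using the edge recurrence $e^{a,b}_n=a e^{a,b}_{n-1}+b e^{a,b}_{n-2}+s^{a,b}_n-s^{a,b}_{n-1}$. You instead extract the coefficients of $E(x)=((a-1)x+bx^2)S(x)^2$ directly, using $(1-y)^{-2}=\sum_{k\ge 0}(k+1)y^k$ with $y=ax+bx^2$ to get the clean expression $f_m=\sum_{j}(m-j+1)\binom{m-j}{j}a^{m-2j}b^j$ for $[x^m]S(x)^2$, then fold $a f_{n-1}+b f_{n-2}$ together with Pascal's identity and match the resulting two-sum form against the target by splitting on the parity of $n-k$. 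I checked the details: the coefficient extraction, the boundary terms (the $j=\lfloor n/2\rfloor$ term when $n$ is even comes entirely from the $b f_{n-2}$ part, consistent with $\binom{n-1-j}{j}=0$ there), and the two parity substitutions $k=n-2j$ and $k=n-2j-1$ all work out, and the formula checks against $e^{a,b}_1=a-1$ and $e^{a,b}_2=2a^2-2a+b$. What your approach buys is that it is non-inductive and essentially constructive --- the intermediate two-sum form is \emph{derived} rather than guessed and verified --- at the cost of the final parity-splitting step, which is where the paper's version of the same floor/ceiling bookkeeping also lives; the two proofs are comparable in length and both rest on results the paper establishes before the theorem (the recurrence (4.1) for the paper, the generating function (4.2) for you).
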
 

\begin{proof}
It is easy to verify that the statement holds for $n=1$ and $n=2$. We proceed by induction.
Since the number of vertices $s^{a,b}_n$ satisfies the identity (\ref{eq:horadam_number of vertices}), we have
\begin{align*}
s^{a,b}_{n}-s^{a,b}_{n-1}&=\sum\limits_{k\geq 0}\binom{n-k}{k}a^{n-2k}b^{k}-\sum\limits_{k\geq 0}\binom{n-k-1}{k}a^{n-2k-1}b^{k}\\
&=\sum\limits_{k=0}^n(-1)^k\binom{n-\ceil*{\frac{k}{2}}}{\floor*{\frac{k}{2}}}a^{n-k}b^{\floor*{\frac{k}{2}}}\\
&=\sum\limits_{k= 0}^n(-1)^k\binom{n-\ceil*{\frac{k}{2}}}{n-k}a^{n-k}b^{\floor*{\frac{k}{2}}}\\
&=\sum\limits_{k=0}^n(-1)^{n-k}\binom{n-\ceil*{\frac{n-k}{2}}}{k}a^{k}b^{\floor*{\frac{n-k}{2}}}\\
&=\sum\limits_{k=0}^n(-1)^{n-k}\binom{\floor*{\frac{n+k}{2}}}{k}a^{k}b^{\floor*{\frac{n-k}{2}}}.
\end{align*}
By using the inductive hypothesis, after adjusting indices and expanding the
 range of summation, we obtain
\begin{align*}
a\cdot e^{a,b}_{n-1}+b\cdot e^{a,b}_{n-2}=&\sum_{k=0}^{n-1}(-1)^{n-k-1}\ceil*{\frac{n+k-1}{2}}\binom{\floor*{\frac{n+k-1}{2}}}{k}a^{k+1}b^{\floor*{\frac{n-k-1}{2}}}+\\&+\sum_{k=0}^{n-2}(-1)^{n-k}\ceil*{\frac{n+k-2}{2}}\binom{\floor*{\frac{n+k-2}{2}}}{k}a^{k}b^{\floor*{\frac{n-k}{2}}}\\
=&\sum_{k=1}^{n}(-1)^{n-k}\ceil*{\frac{n+k-2}{2}}\binom{\floor*{\frac{n+k-2}{2}}}{k-1}a^{k}b^{\floor*{\frac{n-k}{2}}}+\\&+\sum_{k=0}^{n}(-1)^{n-k}\ceil*{\frac{n+k-2}{2}}\binom{\floor*{\frac{n+k-2}{2}}}{k}a^{k}b^{\floor*{\frac{n-k}{2}}}\\
=&\sum_{k=0}^{n}(-1)^{n-k}\ceil*{\frac{n+k-2}{2}}\binom{\floor*{\frac{n+k}{2}}}{k}a^{k}b^{\floor*{\frac{n-k}{2}}}.
\end{align*}
Now, by using expressions for $s^a_{n}-s^a_{n-1}$ and
$a\cdot e^a_{n-1}+b\cdot e^a_{n-2}$ our claim follows at once. \end{proof}

Note that for $a=2$, by the equation (\ref{eq:Horadam_seq_generating_function}), we have  $S'(x)=\dfrac{2+2bx}{(1-2x-bx^2)^2}$ and 
$$E(x)=\dfrac{1}{2}xS'(x).$$ Hence, we can expand the result obtained by Munarini for all $b\geq 1$, to obtain $$e^{2,b}_n=\dfrac{n}{2}s^{2,b}_n.$$ Here is worth mentioning that, although the Pell graphs and Horadam cubes $\Pi^{2,1}_n$ are not isomorphic, they do share the same number of vertices and edges. In particular, if $\Pi_n$ denotes the Pell graph of dimension $n$, we have $$|E(\Pi^{2,1}_n)|=\sum_{k=0}^n(-1)^{n-k}\ceil*{\frac{n+k}{2}}\binom{\floor*{\frac{n+k}{2}}}{k}2^k=\dfrac{nP_{n+1}}{2}=|E(\Pi_n)|.$$

\section{Embedding into hypercubes and median graphs}
In this section, we justify the "cube" part of the name by showing that Horadam cubes are, indeed, induced subgraphs of hypercubes.

\begin{theorem}
For any $a,b\geq 1$, the Horadam cube $\Pi^{a,b}_n$ is an induced subgraph of the hypercube $Q_{(a+b-1)n}$. \label{tm: Horadam_embedding}
\end{theorem}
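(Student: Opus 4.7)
The plan is to construct an explicit injective map $\Phi:V(\Pi^{a,b}_n)\to V(Q_{(a+b-1)n})=\{0,1\}^{(a+b-1)n}$ through a unary encoding of the alphabet. For each letter $k\in\{0,1,\ldots,a+b-1\}$, set
\[
\phi(k)=\underbrace{11\cdots1}_{k}\underbrace{00\cdots0}_{a+b-1-k},
\]
a binary word of length $a+b-1$, and extend $\phi$ to $V(\Pi^{a,b}_n)$ by concatenation, i.e.\ $\Phi(x_1x_2\cdots x_n)=\phi(x_1)\phi(x_2)\cdots\phi(x_n)$. The output has length exactly $(a+b-1)n$, so $\Phi(v)\in V(Q_{(a+b-1)n})$, and $\Phi$ is injective because each consecutive block of $a+b-1$ bits of $\Phi(v)$ uniquely determines the corresponding letter of $v$ (one simply counts the $1$s).

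The key property of the encoding is that, for any two letters $k,k'$ in the alphabet, the Hamming distance between $\phi(k)$ and $\phi(k')$ is exactly $|k-k'|$. With this in hand I would verify the induced-subgraph condition: $v_1v_2\in E(\Pi^{a,b}_n)$ if and only if $\Phi(v_1)$ and $\Phi(v_2)$ differ in exactly one bit. The forward implication is direct: if $v_1,v_2$ differ in exactly one position and by exactly one, then the corresponding blocks $\phi(k)$ and $\phi(k+1)$ differ only at position $k+1$, while the remaining blocks agree, giving Hamming distance $1$. For the converse, if $\Phi(v_1)$ and $\Phi(v_2)$ are at Hamming distance $1$, the single differing bit lies in exactly one block of length $a+b-1$; thus $v_1$ and $v_2$ agree in all but one coordinate, and in that coordinate their letters $k,k'$ satisfy $|k-k'|=1$ by the distance property above, which is precisely adjacency in $\Pi^{a,b}_n$.

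There is no real obstacle in this plan beyond choosing a unary (rather than binary) encoding; the classical reason for this choice is exactly that changing a letter by $\pm 1$ must correspond to flipping a single bit. One final thing worth confirming explicitly is that restrictions on Horadam strings (letters $\geq a$ may appear only after $0$) play no role in the embedding argument — the map $\Phi$ is defined and works identically on any string over $\{0,1,\ldots,a+b-1\}$, so the induced-subgraph property for the subset $V(\Pi^{a,b}_n)$ follows automatically from the pointwise correspondence established above.
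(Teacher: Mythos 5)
Your proof is correct, and it takes a genuinely different (and simpler) route than the paper. You encode each \emph{letter} $k$ by the unary word $1^k0^{a+b-1-k}$ of length $a+b-1$, so that the Hamming distance between two letter-codes equals the difference of the letters; since adjacency in $\Pi^{a,b}_n$ is exactly $\sum_i|x_i-y_i|=1$, this single observation gives both directions of the induced-subgraph equivalence, and your converse direction is in fact argued more explicitly than in the paper. The paper instead defines its map $\sigma$ on the \emph{primitive blocks} $0,1,\dots,a-1,0a,\dots,0(a+b-1)$, sending one-letter blocks to words of length $a+b-1$ and two-letter blocks to words of length $2(a+b-1)$ (so the total length is again $(a+b-1)n$), and checks that consecutive blocks map to words at Hamming distance one. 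The reason for that heavier construction is not this theorem but the next one: the same $\sigma$ is reused to show that Horadam cubes are median graphs, and the paper verifies median-closedness of the image block by block. Your letter-by-letter encoding suffices for the statement as given; if you wanted to reuse it for the median-graph result you would need a separate argument (a letter-wise majority computation shows it also works, but that is not something you claim or need here).
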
 \begin{proof} We define a map $\sigma:\Pi^{a,b}_n\to Q_{(a+b-1)n}$ on the primitive blocks, i.e., the blocks every string from the set $\mathcal{S}^{a,b}_{n}$  can be uniquely decomposed into. Those blocks are $0,1,\dots,a-1, 0a, 0(a+1),\ldots, 0(a+b-1)$. We define $\sigma$ as follows: \begin{align*}
\sigma(0)&=\underbrace{011\cdots 111}_{a}  \underbrace{000\cdots000}_{b-1}\\
\sigma(1)&=\underbrace{001\cdots 111}_{a}  \underbrace{000\cdots000}_{b-1}\\
\sigma(2)&=\underbrace{000\cdots 111}_{a}  \underbrace{000\cdots000}_{b-1}\\
 &\vdots\\
\sigma(a-2)&=\underbrace{000\cdots 001}_{a}  \underbrace{000\cdots000}_{b-1}\\
\sigma(a-1)&=\underbrace{000\cdots 000}_{a}  \underbrace{000\cdots000}_{b-1}\\
\sigma(0a)&=\underbrace{111\cdots 111}_{a}  \underbrace{000\cdots000}_{b-1}\underbrace{000\cdots 000}_{a}  \underbrace{000\cdots000}_{b-1}\\
\sigma(0(a+1))&=\underbrace{111\cdots 111}_{a}  \underbrace{000\cdots000}_{b-1}\underbrace{000\cdots 000}_{a}  \underbrace{100\cdots000}_{b-1}\\
\sigma(0(a+2))&=\underbrace{111\cdots 111}_{a}  \underbrace{000\cdots000}_{b-1}\underbrace{000\cdots 000}_{a}  \underbrace{110\cdots000}_{b-1}\\
 &\vdots\\
 \sigma(0(a+b-1))&=\underbrace{111\cdots 111}_{a}  \underbrace{000\cdots000}_{b-1}\underbrace{000\cdots 000}_{a}  \underbrace{111\cdots 111}_{b-1}\\
\end{align*}
The string $\sigma(k),0\leq k\leq a-1$, of length $a+b-1$ starts with $k+1$ zeros and $a-1-k$ ones. The remaining $b-1$ letters are zeroes. On the other hand, the string $\sigma(0(a+l)),0\leq l\leq b-1$ has length $2(a+b-1)$ and contains $a$ ones followed by $a+b-1$ zeroes, $l$ ones and $b-1-l$ zeroes. The map $\sigma$, defined on the primitive blocks, extends by concatenation to $\sigma:\Pi^{a,b}_n\to Q_{(a+b-1)n}$.

The map $\sigma$ assigns different binary strings to each primitive block, and the neighboring letters are assigned to the binary strings that differ in only one position. We conclude that the map $\rho$ is injective and preserves adjacency, so we obtained a subgraph of $Q_{(a+b-1)n}$ isomorphic to the graph $\Pi^{a,b}_{n}$. \end{proof}

\begin{remark} For $a=3$ and $b=2$, we have $\sigma(0)=0110$, $\sigma(1)=0010$, $\sigma(2)=0000$, $\sigma(03)=11100000$ and $\sigma(04)=1110001$. For the Jacobsthal cube, where $a=1$ and $b=2$,  we have $\sigma(0)=00$, $\sigma(01)=1000$ and $\sigma(02)=1001$.
\end{remark}

\begin{remark} The dimension of the hypercube from Theorem \ref{tm: Horadam_embedding} is not the smallest possible. For example, the number of vertices in Jacobsthal cubes grows slower than the number of vertices in hypercubes. More precisely, $|V(\Pi^{1,2}_n)|=\frac{2}{3}2^n+\frac{1}{3}(-1)^n<2^n=|V(Q_n)|$. Thus, we can define $\sigma(0)=0$, $\sigma(01)=10$, and $\sigma(02)=11$ to obtain an embedding of the Jacobsthal cube into a hypercube of the same dimension. But this embedding does not yield an induced and median-closed subgraph of the hypercube $Q_n$, so, in Theorem \ref{tm: Horadam_embedding}, we constructed an embedding into a larger hypercube to obtain an induced and median-closed subgraph. For example, Figure \ref{fig:embedd of Jac into hypercube} shows an embedding of the Jacobsthal cube $\Pi^{1,2}_4$ into the hypercube $Q_4$. But vertices $0001$ and $0020$, not adjacent in $\Pi^{1,2}_4$, are mapped into adjacent vertices in $Q_4$. The same situation occurs with the vertices $0200$ and $0010$. Thus, although we obtained an embedding, we did not obtain an induced subgraph of $Q_4$. It would be interesting to determine the smallest hypercube dimension containing a Horadam cube as a subgraph.

\begin{figure}[h!] \centering \begin{tikzpicture}[scale=1.4]
\tikzmath{\x1 = 0.35; \y1 =-0.05; \z1=-90; \z2=90; \w1=0.2; \xs=-120; \ys=-30; \yss=-5;
\x2 = \x1 + 1; \y2 =\y1 +3; } 
\small

\node [label={[label distance=\y1 cm]\z1: $0000$},circle,fill=blue,draw=black,scale=\x1](A1) at (1,2) {};
\node [label={[label distance=\y1 cm]\z1: $0001$},circle,fill=blue,draw=black,scale=\x1](A2) at (0,1) {};
\node [label={[label distance=\y1 cm]\z1: $0101$},circle,fill=blue,draw=black,scale=\x1](A3) at (1,0) {};
\node [label={[label distance=\y1 cm]\z1: $0100$},circle,fill=blue,draw=black,scale=\x1](A4) at (2,1) {};
\node [label={[label distance=\y1 cm]\z1: $0010$},circle,fill=blue,draw=black,scale=\x1](A5) at (1,3) {};
\node [label={[label distance=\y1 cm]\z1: $0011$},circle,fill=blue,draw=black,scale=\x1](A6) at (0,2) {};
\node [label={[label distance=\y1 cm]\z1: $0111$},circle,fill=blue,draw=black,scale=\x1](A7) at (1,1) {};
\node [label={[label distance=\y1 cm]\z1: $0110$},circle,fill=blue,draw=black,scale=\x1](A8) at (2,2) {};

\node [label={[label distance=\y1 cm]\z1: $1000$},xshift=\xs,yshift=\ys,circle,fill=blue,draw=black,scale=\x1](A9) at (1,2) {};
\node [label={[label distance=\y1 cm]\z1: $1001$},xshift=\xs,yshift=\ys,circle,fill=blue,draw=black,scale=\x1](A10) at (0,1) {};
\node [label={[label distance=\y1 cm]\z1: $1101$},xshift=\xs,yshift=\ys,circle,fill=blue,draw=black,scale=\x1](A11) at (1,0) {};
\node [label={[label distance=\y1 cm]\z1: $1100$},xshift=\xs,yshift=\ys,circle,fill=blue,draw=black,scale=\x1](A12) at (2,1) {};
\node [label={[label distance=\y1 cm]\z1: $1010$},xshift=\xs,yshift=\ys,circle,fill=blue,draw=black,scale=\x1](A13) at (1,3) {};
\node [label={[label distance=\y1 cm]\z1: $1011$},xshift=\xs,yshift=\ys,circle,fill=blue,draw=black,scale=\x1](A14) at (0,2) {};
\node [label={[label distance=\y1 cm]\z1: $1111$},xshift=\xs,yshift=\ys,circle,fill=blue,draw=black,scale=\x1](A15) at (1,1) {};
\node [label={[label distance=\y1 cm]\z1: $1110$},xshift=\xs,yshift=\ys,circle,fill=blue,draw=black,scale=\x1](A16) at (2,2) {};

\node [label={[label distance=\y1 cm]\z2: $0000$}](B1) at (1,2) {};
\node [label={[label distance=\y1 cm]\z2: $0010$}](B4) at (2,1) {};
\node [label={[label distance=\y1 cm]\z2: $0001$}](B5) at (1,3) {};
\node [label={[label distance=\y1 cm]\z2: $0002$}](B6) at (0,2) {};
\node [label={[label distance=\y1 cm]\z2: $0020$}](B8) at (2,2) {};

\node [label={[label distance=\y1 cm]\z2: $0100$},xshift=\xs,yshift=\ys](B9) at (1,2) {};
\node [label={[label distance=\y1 cm]\z2: $0200$},xshift=\xs,yshift=\ys](B12) at (2,1) {};
\node [label={[label distance=\y1 cm]\z2: $0101$},xshift=\xs,yshift=\ys](B13) at (1,3) {};
\node [label={[label distance=\y1 cm]\z2: $0102$},xshift=\xs,yshift=\ys](B14) at (0,2) {};
\node [label={[label distance=\y1 cm]\z2: $0202$},xshift=\xs,yshift=\ys](B15) at (1,1) {};
\node [label={[label distance=\y1 cm]\z2: $0201$},xshift=\xs,yshift=\ys](B16) at (2,2) {};

\draw [line width=\w1 mm,opacity=0.5,dashed] (A1)--(A2)--(A3)--(A4)--(A1)--(A5)--(A6)--(A7)--(A8)--(A5)
(A2)--(A6) (A3)--(A7)  (A4)--(A8)  (A10)--(A14) (A11)--(A15) (A12)--(A16) (A9)--(A10)--(A11)--(A12)--(A9)--(A13)--(A14)--(A15)--(A16)--(A13) (A1)--(A9) (A2)--(A10) (A3)--(A11)(A4)--(A12)(A5)--(A13)(A6)--(A14)(A7)--(A15) (A8)--(A16) ;

\draw [line width=\w1 mm] (A12)--(A16)--(A15) (A9)--(A13)--(A14) (A1)--(A5)--(A6)  (A15)--(A14)--(A6) (A16)--(A13)--(A5) (A12)--(A9)--(A1)--(A4)--(A8);

\end{tikzpicture}  
\caption{Embedding of the Jacobsthal cube $\Pi^{1,2}_4$ into the hypercube $Q_4$.} \label{fig:embedd of Jac into hypercube}
\end{figure}
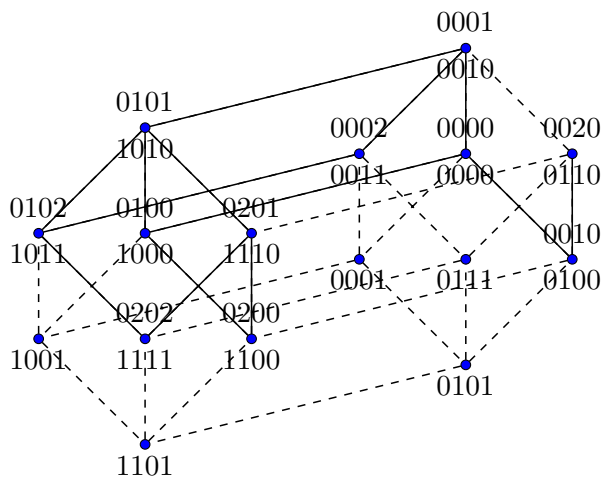

\end{remark}

Recall that a {\em median} of three vertices is a vertex that lies on the shortest path
between every two of three vertices. We say that a graph $G$ is a {\em median
graph} if every three vertices of $G$ have a unique median. Since the Fibonacci cubes, Pell graphs, and metallic cubes are median graphs \cite{metallic, Klavzar, Munarini}, the next natural step is to check whether the Horadam cubes are also median graphs. We found the answer to be positive, and we present that in the next theorem. To do that, we use the following theorem \cite{Mulder}: \begin{theorem}[Mulder]
A graph $G$ is a median graph if and only if $G$ is a connected induced subgraph
of an $n$-cube such that with any three vertices of $G$ their median in
$n$-cube is also a vertex of $G$. 
\end{theorem}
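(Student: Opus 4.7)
The plan is to prove the biconditional in two directions, since the hypothesis and conclusion are of rather different flavors.

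For the easier direction $\Leftarrow$, suppose $G$ is a connected induced subgraph of $Q_n$ closed under taking medians in $Q_n$ (hypercubes being median graphs themselves). The first task is to show that $G$ embeds isometrically, i.e.\ $d_G(u,v) = d_{Q_n}(u,v)$ for all $u, v \in V(G)$, by induction on $k = d_{Q_n}(u,v)$. The base $k \leq 1$ is immediate because $G$ is induced. For the step, connectedness of $G$ furnishes a $u$--$v$ walk along which one can locate a vertex $w \in V(G)$ with $d_{Q_n}(w, v) = k - 1$; taking the median $m$ in $Q_n$ of $u, v, w$, the median-closure hypothesis places $m$ in $V(G)$, and a coordinate-wise check shows $m$ lies on a shortest $u$--$v$ path in $Q_n$ with $d_{Q_n}(u,m),\, d_{Q_n}(m,v) < k$ provided $w$ is chosen so as to agree with $u$ on at least one coordinate where $u$ and $v$ disagree. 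The inductive hypothesis then yields $d_G(u,v) \leq d_G(u,m) + d_G(m,v) = k$. Once isometry is established, the unique median in $Q_n$ of any triple from $V(G)$ lies in $V(G)$ by hypothesis and automatically satisfies the median conditions in $G$.

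For the harder direction $\Rightarrow$, given a median graph $G$, I would invoke the Djokovi\'c--Winkler relation $\Theta$ on $E(G)$, where edges $xy$ and $uv$ are $\Theta$-related iff $d_G(x,u) + d_G(y,v) \neq d_G(x,v) + d_G(y,u)$. The central structural lemma is that $\Theta$ is an equivalence relation on median graphs; its equivalence classes $F_1, \ldots, F_n$ are parallelism classes of edges whose removal splits $G$ into exactly two halfspaces $W_i^0, W_i^1$. The coordinate map $\sigma(v)_i = \epsilon$ iff $v \in W_i^\epsilon$ is then an isometric embedding $\sigma : G \hookrightarrow Q_n$; its image is induced, and it is median-closed because the median of any three vertices of $G$ is sent by $\sigma$ to the coordinate-wise majority of their images, which is precisely their median in $Q_n$.

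The main obstacle is establishing transitivity of $\Theta$ for median graphs, which is the crux of Mulder's original argument. This typically requires either Mulder's convex expansion characterization of median graphs or their description as retracts of hypercubes; without such structural input the relation can fail to be an equivalence even in closely related graph classes. Once transitivity is in hand, the remainder of the forward direction is a standard verification, and the backward direction reduces entirely to the isometry argument above.
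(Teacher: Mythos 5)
A point of context first: the paper does not prove this statement at all. It is Mulder's classical characterization of median graphs, imported from the literature (the reference \cite{Mulder}) and used purely as a black box to show that the Horadam cubes are median graphs. There is therefore no proof of the author's to compare yours against, and your proposal can only be judged on its own terms.

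On those terms, your backward direction is essentially the standard argument and is sound: the induction on $d_{Q_n}(u,v)$ via the median of $u$, $v$, and a suitably chosen vertex $w$ on a connecting path in $G$ does work. Be careful, though, that you need the median $m$ to differ from \emph{both} $u$ and $v$; your stated condition (that $w$ agree with $u$ on at least one coordinate where $u$ and $v$ disagree) only rules out $m=v$, and it is the separate requirement $d_{Q_n}(w,v)=k-1$ that rules out $m=u$, since any $w$ agreeing with $u$ on all $k$ disagreement coordinates would have $d_{Q_n}(w,v)\geq k$. Taken together your two conditions do suffice, but the proposal should say so explicitly. The forward direction, by contrast, is an outline rather than a proof: everything rests on the transitivity of the Djokovi\'c--Winkler relation $\Theta$ for median graphs (equivalently, on median graphs being partial cubes), together with the convexity of halfspaces needed to see that the coordinate map carries medians to coordinatewise majorities. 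You correctly identify this as the crux and correctly note that it requires Mulder's convex expansion theorem or the retract characterization, but you do not supply that input, so the hard half of the equivalence remains unproven. As a self-contained proof the proposal is incomplete, although the architecture is the right one and matches how the result is proved in the literature the paper cites.
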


The median of the triple in the hypercube $Q_n$ is obtained by the majority rule. If $\alpha=\epsilon_1\cdots\epsilon_n$,
$\beta=\delta_1\cdots\delta_n$ and $\gamma=\rho_1\cdots\rho_n$ are binary
strings, i.e., $\epsilon_i,\delta_i,\rho_i\in\left\lbrace 0,1\right\rbrace$,
then their median is $m=\zeta_i\cdots\zeta_n$, where $\zeta_i$ is equal to
the number that appears at least twice among the numbers $\epsilon_i$,
$\delta_i$ and $\rho_i$.

\begin{theorem} For any $a,b\geq 1$ and $n\geq 1$, the Horadam cube $\Pi^{a,b}_n$ is a median graph. 
\end{theorem}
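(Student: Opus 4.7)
The plan is to invoke Mulder's theorem in tandem with Theorem \ref{tm: Horadam_embedding}, which already exhibits $\Pi^{a,b}_n$ as an induced subgraph of $Q_{(a+b-1)n}$ via the embedding $\sigma$. Since $\Pi^{a,b}_n$ is connected (e.g.\ by the canonical decomposition in Theorem \ref{tm:candec_hor}), it suffices to verify that the image $\sigma(V(\Pi^{a,b}_n))$ is median-closed inside the hypercube: for any triple $\alpha,\beta,\gamma\in V(\Pi^{a,b}_n)$, the coordinate-wise majority $m=\mathrm{med}(\sigma(\alpha),\sigma(\beta),\sigma(\gamma))$ must lie in $\sigma(V(\Pi^{a,b}_n))$.

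The key is to partition the $(a+b-1)n$ bits of each $\sigma(\cdot)$ (and hence of $m$) into $n$ consecutive blocks of length $a+b-1$, one per original letter position. For each input, the $i$-th block takes exactly one of three forms: \emph{type (i)}, $\sigma(k)=0^{k+1}1^{a-1-k}0^{b-1}$ for some $0\le k\le a-1$; \emph{type (ii)}, the pattern $1^a 0^{b-1}$, which is the first half of a two-letter primitive block $\sigma(0l)$; or \emph{type (iii)}, the pattern $0^a 1^j 0^{b-1-j}$ for some $0\le j\le b-1$, which is the second half of $\sigma(0(a+j))$. I will check that each block $m_i$ is again of one of these three types, and that the consistency constraint ``type (ii) at position $i$ is immediately followed by type (iii) at position $i+1$'' holds in $m$, so that $m$ decodes to a valid string in $\mathcal{S}^{a,b}_n$.

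The case analysis rests on two structural observations. First, the leading bit of each block distinguishes type (ii) from types (i) and (iii), so bit-wise majority on that single bit tells us that $m_i$ is of type (ii) iff at least two of the inputs are; the first $a$ bits then come out as $1^a$ in that case and have leading bit $0$ otherwise. Second, only type (iii) can contribute a $1$ in the last $b-1$ bits, and all type (iii) patterns are ``sorted'' as $1^j 0^{b-1-j}$, so bit-wise majority in that region (combined with all-zero contributions from types (i) and (ii)) remains sorted and produces a well-defined $j^{\ast}\in\{0,\dots,b-1\}$ for $m_i$. Combining the two observations: if $m_i$ is of type (ii), then at least two inputs have type (ii) at position $i$, hence type (iii) at position $i+1$, which forces the first $a$ bits of $m_{i+1}$ to vanish and hence $m_{i+1}$ to be of type (iii). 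Conversely, if $m_i$ has any $1$ in its last $b-1$ bits, then at least two inputs are of type (iii) at position $i$, hence of type (ii) at position $i-1$, so $m_{i-1}$ is of type (ii).

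The main obstacle will be making this local bookkeeping rigorous, particularly around the borderline case where type (iii) with $j=0$ and type (i) with $k=a-1$ coincide as bit strings ($0^{a+b-1}$); one has to check that this ambiguity never forces an inconsistent global parse, which is handled by letting the type of $m_{i-1}$ dictate the interpretation at the ambiguous position. Once the case analysis is complete, one obtains $m=\sigma(\mu)$ for a unique $\mu\in V(\Pi^{a,b}_n)$, and Mulder's theorem yields that $\Pi^{a,b}_n$ is a median graph.
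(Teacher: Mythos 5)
Your proposal follows essentially the same route as the paper: invoke Mulder's theorem together with the embedding $\sigma$ of Theorem \ref{tm: Horadam_embedding} and verify that $\sigma(V(\Pi^{a,b}_n))$ is median-closed by a local, block-by-block majority analysis. Your version is in fact more complete than the paper's, which checks only two block combinations and omits ``the remaining cases''; your uniform partition into $n$ blocks of length $a+b-1$ with the three types, the type-(ii)/type-(iii) adjacency constraint, and the explicit disambiguation of the all-zero block $0^{a+b-1}$ supplies exactly the details the paper leaves out.
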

\begin{proof}

Let $\sigma$ be the map defined as in the proof of
Theorem \ref{tm: Horadam_embedding}. The graph $\Pi^{a,b}_n$ is a connected
induced subgraph of a hypercube. To finish our proof, we just need
to verify that the subgraph induced by the set
$\sigma\left(\mathcal{S}^{a,b}_n\right)$  is median closed. Every string from 
the set $\sigma\left(\mathcal{S}^{a,b}_n\right)$ can be uniquely decomposed into
blocks $\sigma\left(k\right),0\leq k\leq a-1$ of length $a+b-1$, 
and  $\sigma\left(0(a+l)\right),0\leq l\leq b-1$ of length $2(a+b-1)$. First we consider three blocks, $\sigma\left(i\right)$,
$\sigma\left(j\right)$ and $\sigma\left(k\right)$. Without loss of
generality, we can assume that $0\leq i\leq j\leq k\leq a-1$. Then their median is
$\sigma\left(j\right)$. In the second case, we have $\sigma\left(i\right)$
and $\sigma\left(j\right)$ for $0\leq i\leq j\leq a-1$, and the first or the second half of the block $\sigma\left(0(a+l)\right)$. Their median is now
$\sigma\left(i\right)$. The remaining cases include all combinations of blocks; since they are done similarly, we omit the details. 
\end{proof}

 For example, when $a=3$ and $b=2$, we have $\sigma(0)=0110$, $\sigma(1)=0010$, $\sigma(2)=0000$, $\sigma(03)=11100000$, and $\sigma(04)=11100001$. Then the median of vertices $042$, $204$ and $110$ is $112$.

\section{Distribution of degrees}

Klav\v{z}ar et al. determined the distribution of degrees in Fibonacci and Lucas cubes \cite{degree}. Here we generalize the result for the Fibonacci cubes to all Horadam cubes. More precisely, in this section, we investigate the distribution of degrees by determining the recurrence relations and by computing the bivariate generating functions for two-indexed sequences $\Delta_{n,k}$ which count the number of vertices in
$\Pi_n^{a,b}$ with exactly $k$ neighbors. We suppress $a$ and $b$ to simplify
the notation. The distribution of degrees in Horadam cubes is considered in two separate cases, for $a=1$ and $a\geq 2$. The following two theorems provide recursive relations for $\Delta_{n,k}$.  

\begin{theorem}\label{tm: Horadam_distribution of degrees a>=2}
Let $a\geq 2$ and let $\Delta_{n,k}$ denote the number of vertices in the Horadam cube $\Pi^{a,b}_n$ with exactly $k$ neighbors. Then the sequence $\Delta_{n,k}$ satisfies the recurrence
\begin{align*}
\Delta_{n,k}=2\Delta_{n-1,k-1}+(a-2)\Delta_{n-1,k-2}+\Delta_{n-2,k-1}+(b-2)\Delta_{n-2,k-2}+\Delta_{n-2,k-3},
\end{align*} with initial values $\Delta_{0,0}=1$, $\Delta_{1,1}=2$ and $\Delta_{1,2}=a-2$. 
\end{theorem}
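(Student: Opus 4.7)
My strategy is to exploit the canonical decomposition of Theorem~\ref{tm:candec_hor}: every vertex of $\Pi^{a,b}_n$ lies in exactly one of the $a+b$ induced copies $0\Pi^{a,b}_{n-1},\ldots,(a-1)\Pi^{a,b}_{n-1}$ and $0a\Pi^{a,b}_{n-2},\ldots,0(a+b-1)\Pi^{a,b}_{n-2}$, and its degree in $\Pi^{a,b}_n$ equals its degree inside the copy it belongs to, plus the number of edges going to adjacent copies.

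First I would tabulate the inter-copy degree increment for a vertex of each copy. The two path factors $P_a$ and $P_b$ contribute $+2$ to every vertex of a middle copy $j\Pi^{a,b}_{n-1}$ with $1\le j\le a-2$ and of a middle copy $0l\Pi^{a,b}_{n-2}$ with $a+1\le l\le a+b-2$, and $+1$ to every vertex of the extremal copies $(a-1)\Pi^{a,b}_{n-1}$ and $0(a+b-1)\Pi^{a,b}_{n-2}$. Two copies need separate handling: $0a\Pi^{a,b}_{n-2}$ is joined both along $P_b$ to $0(a+1)\Pi^{a,b}_{n-2}$ and via the extra family of cross-edges to $0(a-1)\Pi^{a,b}_{n-2}\subset 0\Pi^{a,b}_{n-1}$, so each of its vertices gains $+2$; while a vertex $0v'\in 0\Pi^{a,b}_{n-1}$ gains $+1$ along $P_a$ plus a further $+1$ precisely when $v'$ begins with the letter $a-1$.

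Summing the contributions produces
\begin{equation*}
\Delta_{n,k}= 2\Delta_{n-1,k-1}+(a-2)\Delta_{n-1,k-2}+(b-1)\Delta_{n-2,k-2}+\Delta_{n-2,k-1}+X_{n-1,k-2}-X_{n-1,k-1},
\end{equation*}
where $X_{n-1,j}$ denotes the number of vertices of $\Pi^{a,b}_{n-1}$ of degree $j$ whose first letter is $a-1$. Applying the canonical decomposition one more level down identifies these vertices with the terminal copy $(a-1)\Pi^{a,b}_{n-2}$ of the $P_a$ factor of $\Pi^{a,b}_{n-1}$, each such vertex receiving exactly a $+1$ boost, so $X_{n-1,j}=\Delta_{n-2,j-1}$. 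Substituting $X_{n-1,k-1}=\Delta_{n-2,k-2}$ and $X_{n-1,k-2}=\Delta_{n-2,k-3}$ collapses the displayed expression into the claimed recurrence; the initial values follow immediately from $\Pi^{a,b}_0$ being a single vertex and $\Pi^{a,b}_1=P_a$.

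The delicate point will be the bookkeeping for the copies $0\Pi^{a,b}_{n-1}$ and $0a\Pi^{a,b}_{n-2}$, since the single extra family of cross-edges between $0(a-1)\Pi^{a,b}_{n-2}$ and $0a\Pi^{a,b}_{n-2}$ is the only contribution not encoded by the two path factors, and it is precisely what generates the $\Delta_{n-2,k-3}$ correction. The boundary cases $a=2$ (no middle $j\Pi^{a,b}_{n-1}$-copies), $b=1$ (a single $\Pi^{a,b}_{n-2}$-copy), and $b=2$ are absorbed uniformly by adopting the convention $\Delta_{n,k}=0$ for $k<0$ and observing that the term-by-term counting above remains valid.
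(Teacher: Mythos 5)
Your proposal is correct and follows essentially the same route as the paper: canonical decomposition, degree increments of $+1$ or $+2$ per copy, and special treatment of $0\Pi^{a,b}_{n-1}$, whose vertices beginning $0(a-1)$ pick up the extra cross-edge to $0a\Pi^{a,b}_{n-2}$ and produce the $\Delta_{n-2,k-3}-\Delta_{n-2,k-2}$ correction (your auxiliary count $X_{n-1,j}=\Delta_{n-2,j-1}$ is just a reorganization of the paper's observation that those vertices have degree three greater than in $\Pi^{a,b}_{n-2}$).
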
 \begin{proof}
Since the graph $\Pi^{a,b}_0$ is an empty graph, and $\Pi^{a,b}_1$ is a path graph $P_a$, one can easily verify the initial values. Let $n\geq 2$. By Theorem \ref{tm:candec_hor}, $\Pi^{a,b}_n$ contains $a$ copies of the graph $\Pi^{a,b}_{n-1}$. Recall that the copy of the subgraph $\Pi^{a,b}_{n-1}$ induced by vertices in $\Pi^{a,b}_n$ starting with the letter $k, 0\leq k\leq a-1$, is denoted by $k\Pi^{a,b}_{n-1}$ and the copy of the subgraph $\Pi^{a,b}_{n-2}$ induced by vertices in $\Pi^{a,b}_n$ starting with the block $0l, a\leq l\leq a+b-1$,  is denoted by $0l\Pi^{a,b}_{n-2}$. Each vertex in subgraphs $k\Pi^{a,b}_{n-1}$, where $1\leq k\leq a-2$, has exactly two new neighbors, namely the corresponding vertices in adjacent subgraphs, $(k-1)\Pi^{a,b}_{n-1}$ and $(k+1)\Pi^{a,b}_{n-1}$. Thus the degree of each vertex in those $a-2$ subgraphs is increased by two and they contribute to $\Delta_{n,k}$ with $(a-2)\Delta_{n-1,k-2}$. Furthermore, the vertices of the copy of $\Pi^{a,b}_{n-1}$ induced by vertices starting with $a-1$, denoted by $(a-1)\Pi^{a,b}_{n-1}$, have exactly one new neighbor, the corresponding vertex in the graph $(a-2)\Pi^{a,b}_{n-1}$. So, the contribution of this subgraph is $\Delta_{n-1,k-1}$. Similar analysis for $b$ copies of the graph $\Pi^{a,b}_{n-2}$ yields the contribution of $\Delta_{n-2,k-1}+(b-1)\Delta_{n-2,k-2}$. The subgraph $0\Pi^{a,b}_{n-1}$ needs to be treated differently, because it is the only copy neighboring two copies of different dimensions. Namely, it neighbors the copy  $1\Pi^{a,b}_{n-1}$ and the copy $0a\Pi^{a,b}_{n-2}$. This implies that some vertices in $0\Pi^{a,b}_{n-1}$ have two new neighbors and others have only one. More precisely, subgraph $0(a-1)\Pi^{a,b}_{n-2}$ of the subgraph $0\Pi^{a,b}_{n-1}$ contains vertices with two new neighbors because each vertex in $0(a-1)\Pi^{a,b}_{n-2}$ has one new neighbor in $0a\Pi^{a,b}_{n-2}$, and the other one in $0(a-2)\Pi^{a,b}_{n-2}\subset 0\Pi^{a,b}_{n-1}$. The remaining vertices in $0\Pi^{a,b}_{n-1}$ have one new neighbor. Vertices in subgraph $0(a-1)\Pi^{a,b}_{n-2}\subset\Pi^{a,b}_n$ have a degree for three greater than the corresponding vertices in the graph $\Pi^{a,b}_{n-2}$. Hence, the contribution of the subgraph $0\Pi^{a,b}_n$ to $\Delta_{n,k}$ is $\Delta_{n-1,k-1}-\Delta_{n-2,k-2}+\Delta_{n-2,k-3}$, and the claim follows.  
\end{proof}

Figure \ref{fig: Horadam_distribution_degree_Pi_n^{3,2}} shows the recursive nature of the degrees of vertices in the Horadam cube $\Pi^{3,2}_n$ for $n=1,2,3$. 

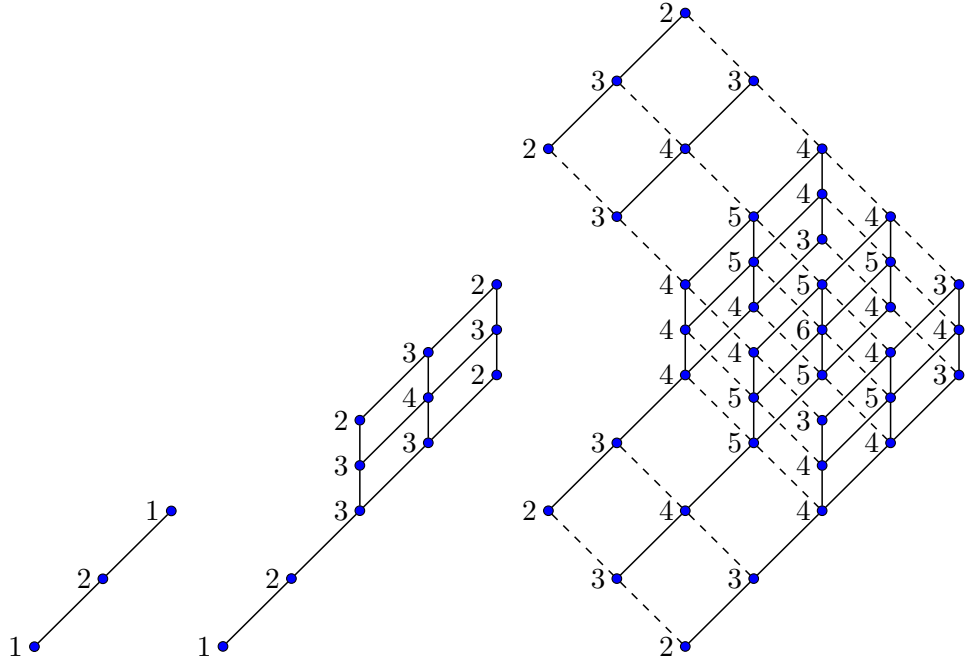
\begin{figure}[h!] \centering \begin{tikzpicture}[scale=0.6]
\tikzmath{\x1 = 0.35; \y1 =-0.05; \z1=180; \w1=0.2; \xs=-8; \ys=0; \yss=-5; \z2=90;
\x2 = \x1 + 1; \y2 =\y1 +3; } 
\small
\node [label={[label distance=\y1 cm]\z1: $1$},circle,fill=blue,draw=black,scale=\x1](A1) at (0,4) {};
\node [label={[label distance=\y1 cm]\z1: $2$},circle,fill=blue,draw=black,scale=\x1](A4) at (-1.5,2.5) {};
\node [label={[label distance=\y1 cm]\z1: $1$},circle,fill=blue,draw=black,scale=\x1](A7) at (-3,1) {};

\draw [line width=\w1 mm] (A1)--(A4)--(A7);

\end{tikzpicture} \centering \begin{tikzpicture}[scale=0.6]
\tikzmath{\x1 = 0.35; \y1 =-0.05; \z1=180; \z2=90; \w1=0.2; \xs=-8; \ys=0; \yss=-5;
\x2 = \x1 + 1; \y2 =\y1 +3; } 
\small
\node [label={[label distance=\y1 cm]\z1: $2$},circle,fill=blue,draw=black,scale=\x1](A1) at (0,4) {};
\node [label={[label distance=\y1 cm]\z1: $3$},circle,fill=blue,draw=black,scale=\x1](A2) at (0,5) {};
\node [label={[label distance=\y1 cm]\z1: $2$},circle,fill=blue,draw=black,scale=\x1](A3) at (0,6) {};
\node [label={[label distance=\y1 cm]\z1: $3$},circle,fill=blue,draw=black,scale=\x1](A4) at (-1.5,2.5) {};
\node [label={[label distance=\y1 cm]\z1: $4$},circle,fill=blue,draw=black,scale=\x1](A5) at (-1.5,3.5) {};
\node [label={[label distance=\y1 cm]\z1: $3$},circle,fill=blue,draw=black,scale=\x1](A6) at (-1.5,4.5) {};
\node [label={[label distance=\y1 cm]\z1: $3$},circle,fill=blue,draw=black,scale=\x1](A7) at (-3,1) {};
\node [label={[label distance=\y1 cm]\z1: $3$},circle,fill=blue,draw=black,scale=\x1](A8) at (-3,2) {};
\node [label={[label distance=\y1 cm]\z1: $2$},circle,fill=blue,draw=black,scale=\x1](A9) at (-3,3) {};
\node [label={[label distance=\y1 cm]\z1: $2$},circle,fill=blue,draw=black,scale=\x1](A28) at (-4.5,-0.5) {};
\node [label={[label distance=\y1 cm]\z1: $1$},circle,fill=blue,draw=black,scale=\x1](A37) at (-6,-2) {};

\draw [line width=\w1 mm] (A1)--(A2)--(A3) (A4)--(A5)--(A6) (A7)--(A8)--(A9)  (A1)--(A4)--(A7)--(A28)--(A37) (A2)--(A5)--(A8) (A3)--(A6)--(A9);

\end{tikzpicture} \begin{tikzpicture}[scale=0.6]
\tikzmath{\x1 = 0.35; \y1 =-0.05; \z1=180; \w1=0.2; \xs=-8; \ys=0; \yss=-5;
\x2 = \x1 + 1; \y2 =\y1 +3; } 
\small
\node [label={[label distance=\y1 cm]\z1: $3$},circle,fill=blue,draw=black,scale=\x1](A1) at (0,4) {};
\node [label={[label distance=\y1 cm]\z1: $4$},circle,fill=blue,draw=black,scale=\x1](A2) at (0,5) {};
\node [label={[label distance=\y1 cm]\z1: $4$},circle,fill=blue,draw=black,scale=\x1](A3) at (0,6) {};
\node [label={[label distance=\y1 cm]\z1: $4$},circle,fill=blue,draw=black,scale=\x1](A4) at (-1.5,2.5) {};
\node [label={[label distance=\y1 cm]\z1: $5$},circle,fill=blue,draw=black,scale=\x1](A5) at (-1.5,3.5) {};
\node [label={[label distance=\y1 cm]\z1: $5$},circle,fill=blue,draw=black,scale=\x1](A6) at (-1.5,4.5) {};
\node [label={[label distance=\y1 cm]\z1: $4$},circle,fill=blue,draw=black,scale=\x1](A7) at (-3,1) {};
\node [label={[label distance=\y1 cm]\z1: $4$},circle,fill=blue,draw=black,scale=\x1](A8) at (-3,2) {};
\node [label={[label distance=\y1 cm]\z1: $4$},circle,fill=blue,draw=black,scale=\x1](A9) at (-3,3) {};
\node [label={[label distance=\y1 cm]\z1: $4$},circle,fill=blue,draw=black,scale=\x1](A10) at (1.5,2.5) {};
\node [label={[label distance=\y1 cm]\z1: $5$},circle,fill=blue,draw=black,scale=\x1](A11) at (1.5,3.5) {};
\node [label={[label distance=\y1 cm]\z1: $4$},circle,fill=blue,draw=black,scale=\x1](A12) at (1.5,4.5) {};
\node [label={[label distance=\y1 cm]\z1: $5$},circle,fill=blue,draw=black,scale=\x1](A13) at (0,1) {};
\node [label={[label distance=\y1 cm]\z1: $6$},circle,fill=blue,draw=black,scale=\x1](A14) at (0,2) {};
\node [label={[label distance=\y1 cm]\z1: $5$},circle,fill=blue,draw=black,scale=\x1](A15) at (0,3) {};
\node [label={[label distance=\y1 cm]\z1: $5$},circle,fill=blue,draw=black,scale=\x1](A16) at (-1.5,-0.5) {};
\node [label={[label distance=\y1 cm]\z1: $5$},circle,fill=blue,draw=black,scale=\x1](A17) at (-1.5,0.5) {};
\node [label={[label distance=\y1 cm]\z1: $4$},circle,fill=blue,draw=black,scale=\x1](A18) at (-1.5,1.5) {};
\node [label={[label distance=\y1 cm]\z1: $3$},circle,fill=blue,draw=black,scale=\x1](A19) at (3,1) {};
\node [label={[label distance=\y1 cm]\z1: $4$},circle,fill=blue,draw=black,scale=\x1](A20) at (3,2) {};
\node [label={[label distance=\y1 cm]\z1: $3$},circle,fill=blue,draw=black,scale=\x1](A21) at (3,3) {};
\node [label={[label distance=\y1 cm]\z1: $4$},circle,fill=blue,draw=black,scale=\x1](A22) at (1.5,-0.5) {};
\node [label={[label distance=\y1 cm]\z1: $5$},circle,fill=blue,draw=black,scale=\x1](A23) at (1.5,0.5) {};
\node [label={[label distance=\y1 cm]\z1: $4$},circle,fill=blue,draw=black,scale=\x1](A24) at (1.5,1.5) {};
\node [label={[label distance=\y1 cm]\z1: $4$},circle,fill=blue,draw=black,scale=\x1](A25) at (0,-2) {};
\node [label={[label distance=\y1 cm]\z1: $4$},circle,fill=blue,draw=black,scale=\x1](A26) at (0,-1) {};
\node [label={[label distance=\y1 cm]\z1: $3$},circle,fill=blue,draw=black,scale=\x1](A27) at (0,0) {};
\node [label={[label distance=\y1 cm]\z1: $3$},circle,fill=blue,draw=black,scale=\x1](A28) at (-4.5,-0.5) {};
\node [label={[label distance=\y1 cm]\z1: $4$},circle,fill=blue,draw=black,scale=\x1](A29) at (-3,-2) {};
\node [label={[label distance=\y1 cm]\z1: $3$},circle,fill=blue,draw=black,scale=\x1](A30) at (-1.5,-3.5) {};
\node [label={[label distance=\y1 cm]\z1: $3$},circle,fill=blue,draw=black,scale=\x1](A31) at (-1.5,7.5) {};
\node [label={[label distance=\y1 cm]\z1: $4$},circle,fill=blue,draw=black,scale=\x1](A32) at (-3,6) {};
\node [label={[label distance=\y1 cm]\z1: $3$},circle,fill=blue,draw=black,scale=\x1](A33) at (-4.5,4.5) {};
\node [label={[label distance=\y1 cm]\z1: $2$},circle,fill=blue,draw=black,scale=\x1](A34) at (-3,9) {};
\node [label={[label distance=\y1 cm]\z1: $3$},circle,fill=blue,draw=black,scale=\x1](A35) at (-4.5,7.5) {};
\node [label={[label distance=\y1 cm]\z1: $2$},circle,fill=blue,draw=black,scale=\x1](A36) at (-6,6) {};
\node [label={[label distance=\y1 cm]\z1: $2$},circle,fill=blue,draw=black,scale=\x1](A37) at (-6,-2) {};
\node [label={[label distance=\y1 cm]\z1: $3$},circle,fill=blue,draw=black,scale=\x1](A38) at (-4.5,-3.5) {};
\node [label={[label distance=\y1 cm]\z1: $2$},circle,fill=blue,draw=black,scale=\x1](A39) at (-3,-5) {};

\draw [line width=\w1 mm] (A1)--(A2)--(A3) (A4)--(A5)--(A6) (A7)--(A8)--(A9) (A10)--(A11)--(A12) (A13)--(A14)--(A15) (A16)--(A17)--(A18) (A19)--(A20)--(A21) (A22)--(A23)--(A24) (A25)--(A26)--(A27) (A1)--(A4)--(A7)--(A28) (A2)--(A5)--(A8) (A3)--(A6)--(A9) (A10)--(A13)--(A16)--(A29) (A11)--(A14)--(A17) (A12)--(A15)--(A18) (A19)--(A22)--(A25)--(A30) (A20)--(A23)--(A26) (A21)--(A24)--(A27) (A31)--(A32)--(A33) (A34)--(A35)--(A36) (A28)--(A37) (A29)--(A38) (A30)--(A39); 

\draw [line width=\w1 mm,dashed, opacity=0.2] (A1)--(A10)--(A19) (A2)--(A11)--(A20) (A31)--(A3)--(A12)--(A21) (A4)--(A13)--(A22) (A5)--(A14)--(A23) (A32)--(A6)--(A15)--(A24) (A7)--(A16)--(A25) (A8)--(A17)--(A26) (A33)--(A9)--(A18)--(A27)  (A28)--(A29)--(A30) (A37)--(A38)--(A39) (A31)--(A34) (A32)--(A35) (A33)--(A36); 

\end{tikzpicture}
\caption{Recursive nature of the degrees in Horadam cube $\Pi^{3,2}_n$ for $n=1,2,3$.} \label{fig: Horadam_distribution_degree_Pi_n^{3,2}}
\end{figure}

\begin{theorem}\label{tm: Horadam_distribution of degrees a=1}
For $a=1$, the sequence $\Delta_{n,k}$ satisfies the recursive relation
\begin{align*}
\Delta_{n,k}&=\Delta_{n-1,k-1}+\Delta_{n-2,k-1}+(b-1)\Delta_{n-2,k-2}\\&+\Delta_{n-3,k-1}+(b-2)\Delta_{n-3,k-2}-(b-1)\Delta_{n-3,k-3},
\end{align*} with initial conditions $\Delta_{0,0}=1$, $\Delta_{1,1}=0$, $\Delta_{2,1}=2$ and $\Delta_{2,2}=b-2$. 
\end{theorem}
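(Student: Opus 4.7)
The plan is to parallel the argument used for Theorem \ref{tm: Horadam_distribution of degrees a>=2}, again exploiting the canonical decomposition of Theorem \ref{tm:candec_hor}, but with one additional layer of bookkeeping. Because $a=1$, the ``$a$-copy'' part of that decomposition collapses to the single copy $0\Pi^{1,b}_{n-1}$, and the vertices in this copy must be split further according to the first primitive block of the inner string $\alpha\in\mathcal{S}^{1,b}_{n-1}$. This refinement is what forces the recurrence to reach back three levels rather than two.

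First I would verify the initial conditions by direct inspection: $\Pi^{1,b}_0$ and $\Pi^{1,b}_1$ are single isolated vertices, and $\Pi^{1,b}_2$ is the path on the vertices $00,01,\dots,0b$. For $n\geq 3$ I apply the canonical decomposition $\Pi^{1,b}_n=0\Pi^{1,b}_{n-1}\oplus 01\Pi^{1,b}_{n-2}\oplus\cdots\oplus 0b\Pi^{1,b}_{n-2}$, keeping in mind the $P_b$-shaped extra edges among the $0l$-copies and the cross edges between $00\Pi^{1,b}_{n-2}\subset 0\Pi^{1,b}_{n-1}$ and $01\Pi^{1,b}_{n-2}$.

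Next I would tabulate the degree increments. A vertex $0\alpha$ in the $0$-copy gains exactly one extra neighbor when $\alpha$ begins with the primitive block $0$ (the only case in which position $2$ of $0\alpha$ can be raised to $1$ without violating the alphabet rule), and none otherwise; hence its degree in $\Pi^{1,b}_n$ is $\deg_{\Pi^{1,b}_{n-1}}(\alpha)+1$ in the former case and $\deg_{\Pi^{1,b}_{n-1}}(\alpha)$ in the latter. Among the $0l$-copies, the vertex $01\gamma$ acquires the extra neighbor $00\gamma$ together with one more neighbor $02\gamma$ when $b\geq 2$; a vertex $0l\gamma$ with $2\leq l\leq b-1$ acquires two extras; and $0b\gamma$ acquires a single extra. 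Introducing the auxiliary counts $A_{n,k}$ for the number of vertices of $\Pi^{1,b}_n$ whose first primitive block is $0$ and whose degree is $k$, and $B_{n,k}=\Delta_{n,k}-A_{n,k}$ for the remaining vertices, this bookkeeping immediately yields
\begin{equation*}
B_{n,k}=\Delta_{n-2,k-1}+(b-1)\Delta_{n-2,k-2}
\qquad\text{and}\qquad
A_{n,k}=A_{n-1,k-1}+B_{n-1,k}.
\end{equation*}

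To finish the proof, I would use $A_{n-1,\cdot}=\Delta_{n-1,\cdot}-B_{n-1,\cdot}$ together with the explicit formula for $B_{n-1,\cdot}$ to rewrite $A_{n,k}$ entirely in terms of $\Delta_{n-1,\cdot}$ and $\Delta_{n-3,\cdot}$, and then add $B_{n,k}$ on both sides to recover $\Delta_{n,k}$ in the stated form. The main obstacle is this elimination step: the $\Delta_{n-3,\cdot}$ contributions enter through both $B_{n-1,k-1}$ and $B_{n-1,k}$ and must be combined with care, and the negative coefficient $-(b-1)$ in front of $\Delta_{n-3,k-3}$ arises from the cancellation inside $A_{n-1,k-1}=\Delta_{n-1,k-1}-B_{n-1,k-1}$.
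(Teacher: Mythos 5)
Your proof is correct and follows essentially the same route as the paper: the canonical decomposition, the observation that a vertex $0\alpha$ gains one new neighbor precisely when $\alpha$ begins with the primitive block $0$, and the degree increments $+1$ (for $l=b$) and $+2$ (for $1\le l\le b-1$) in the $0l$-copies; your $A_{n,k}/B_{n,k}$ bookkeeping is just a cleaner packaging of the paper's direct substitution of $\Pi^{1,b}_{n-1}=\Pi^{1,b}_{n-2}\oplus P_b\square\Pi^{1,b}_{n-3}$. One small point: the direct inspection of $\Pi^{1,b}_2$ (a path on $b+1$ vertices) that you promise yields $\Delta_{2,2}=b-1$ rather than the $b-2$ stated in the theorem, which appears to be a typo in the statement (both the table for $b=2$ and the generating function give $b-1$).
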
 \begin{proof}
By Theorem \ref{tm:candec_hor}, $\Pi^{1,b}_n$ contains one copy of the graph $\Pi^{1,b}_{n-1}$ and $b$ copies of the graph $\Pi^{1,b}_{n-2}$. Furthermore, by the same theorem, $\Pi^{1,b}_{n-1}$ can be further decomposed into one copy of $\Pi^{1,b}_{n-2}$ and $b$ copies of $\Pi^{1,b}_{n-3}$. So, $\Pi^{1,b}_{n}=P_{b+1}\square\Pi^{1,b}_{n-2}\oplus P_b\square \Pi^{1,b}_{n-3}$. Some vertices in the subgraph $\Pi^{1,b}_{n-1}=\Pi^{1,b}_{n-2}\oplus P_b\square\Pi^{1,b}_{n-3}$ have one new neighbor in $\Pi^{1,b}_{n}$, while others have no new neighbors. More precisely, vertices in subgraph $00l\Pi^{1,b}_{n-3}\subset\Pi^{1,b}_{n-1}$, where $1\leq l\leq b$, have no new neighbors, while vertices $00\Pi^{1,b}_{n-2}\subset \Pi^{1,b}_{n-1}$ have one new neighbor. Vertices in subgraph $00b\Pi^{1,b}_{n-3}\subset\Pi^{1,b}_n$ have a degree one greater than the corresponding vertices in the graph $\Pi^{1,b}_{n-3}$. Similarly, vertices in subgraph $00l\Pi^{1,b}_{n-3}\subset\Pi^{1,b}_n$, for $1\leq l\leq b-1$, have a degree for two greater than the corresponding vertices in the graph $\Pi^{1,b}_{n-3}$. So, the contribution of vertices in $0\Pi^{a,b}_{n-1}$ to $\Delta_{n,k}$ is $\Delta_{n-1,k-1}-\Delta_{n-3,k-2}+\Delta_{n-3,k-1}+(b-1)(-\Delta_{n-3,k-3}+\Delta_{n-3,k-2})$. What remains are $b$ copies of $\Pi^{1,b}_{n-2}$. Vertices in $0l\Pi^{1,b}_{n-2}$ contribute with $(b-1)\Delta_{n-2,k-2}$ for $1\leq l\leq b-1$, and with $\Delta_{n-2,k-1}$ for $l=b$. This completes the proof.
\end{proof}

From Theorems \ref{tm: Horadam_distribution of degrees a>=2} and \ref{tm: Horadam_distribution of degrees a=1} we can readily obtain bivariate generating functions. The generating function $\Delta(x,y)=\sum_{n,k}\Delta_{n,k}x^ny^k$ for the sequence $\Delta_{n,k}$ when $a\geq 2$ is  
\begin{align} \label{eq: Horadam_generating dunction_distribution of degrees a>=2}
\Delta(x,y)=\dfrac{1}{1-(2y+(a-2)y^2)x-(y+(b-2)y^2+y^3)x^2},
\end{align}
and for $a=1$ we have  
\begin{align}\label{eq: Horadam_generating dunction_distribution of degrees a=1}
\Delta(x,y)=\dfrac{1-(y-1)x}{1-xy-(y+(b-1)y^2)x^2-(y+(b-2)y^2-(b-1)y^3)x^3}.\end{align}

Now we present another way to derive the generating functions. Recall that $\mathcal{S}^{a,b}$ denotes the set of
all words from the alphabet $\left\lbrace 0,1,\dots,a+b-1\right\rbrace$ with the
property that the letters greater than or equal to $a$ can only appear immediately after $0$. Every word
starts with $0,\dots, a-1$, or with the block $0(a+l)$ for some $0\leq l\leq b-1$. Hence, 
$$\mathcal{S}^a=\varepsilon+0\mathcal{S}^{a,b}+1\mathcal{S}^{a,b}+\cdots+(a-1)\mathcal{S}^{a,b}+0a\mathcal{S}^{a,b}+\cdots+0(a+b-1)\mathcal{S}^{a,b},$$
where $\varepsilon$ denotes the empty word. Let
$\Delta(x,y)=\sum\limits_{n,k}\Delta_{n,k}x^ny^k$ be the formal power series
where $\Delta_{n,k}$ is the number of vertices of length $n$ having $k$
neighbors, as before. Consider the set that contains all vertices that start with $0$,
but not with the $0l$-block for $a\leq l\leq a+b-1$, and let $\Delta^0(x,y)$ denote the corresponding formal power
series. Then we have
\begin{align*}
\Delta(x,y)=1+\Delta^0(x,y)+\left((a-2)xy^2+xy+(b-1)x^2y^2 +x^2y\right)\Delta(x,y)\\
\Delta^0(x,y)=xy\left(1+\Delta^0(x,y)+\left((a-1)xy^2+(b-1)x^2y^2+x^2y\right)\Delta(x,y)\right).
\end{align*}  
Note that adding $0$ at the beginning of a vertex increases the number of
neighbors by one, except when the vertex starts with $a-1$, in which case
adding $0$ increases the number of neighbors by two. By solving the above
system of equations we readily obtain the generating function (\ref{eq: Horadam_generating dunction_distribution of degrees a>=2}).

In the case of $a=1$, adding $0$ at the beginning of a vertex increases the number of neighbors only if the vertex does not start with a $0l$ block. So we have
\begin{align*}
\Delta(x,y)=1+\Delta^0(x,y)+\left((b-1)x^2y^2 +x^2y\right)\Delta(x,y)\\
\Delta^0(x,y)=x\left(1+y\Delta^0(x,y)+\left((b-1)x^2y^2+x^2y\right)\Delta(x,y)\right),
\end{align*} 

which yields the generating function (\ref{eq: Horadam_generating dunction_distribution of degrees a=1}).

For the reader's convenience, we list the first few values of
$\Delta_{n,k}$ in Table \ref{table:Horadam_distribution of degrees}.
\begin{table}[h]\centering
$\begin{array}{ccccccc}
 a=1,b=2& a=2,b=2\\
\begin{array}{c|ccccccc}
n\backslash k & 1 & 2  & 3 & 4 & 5 \\ 
\hline
1 &  1 & 0 & 0 & 0 & 0  \\
2 &  2 & 1 & 0 & 0 & 0  \\
3 &  2 & 3 & 0 & 0 & 0 \\
4 &  1 & 4 & 5 & 1 & 0 \\
5 &  0 & 5 & 10& 6 & 0 \\
\end{array}&
\begin{array}{c|cccccccccc}
n\backslash k & 1 & 2  & 3 & 4 & 5 & 6 & 7 & 8\\ 
\hline
1 &  2 & 0 & 0 & 0 & 0 & 0 & 0 & 0 \\
2 & 1 & 4 & 1 & 0 & 0 & 0 & 0 & 0\\
3 &  0 & 4 & 8 & 4 & 0 & 0 & 0 & 0\\
4 & 0 & 1 & 12 & 18 & 12 & 1 & 0 & 0 \\
5 & 0 & 0 & 6 & 32 & 44 & 32 & 6 & 0 \\
\end{array}\\ [10ex]
\multicolumn{2}{c}{a=3,b=2}\\
\multicolumn{2}{c}{
\begin{array}{c|cccccccccc}
n\backslash k & 1 & 2  & 3 & 4 & 5 & 6 & 7 & 8 & 9 & 10\\ 
\hline
1 &  2 & 1 & 0 & 0 & 0 & 0 & 0 & 0 & 0 & 0\\
2 & 1 & 4 & 5 & 1 & 0 & 0 & 0 & 0 & 0 & 0\\
3 &  0 & 4 & 10 & 16 & 8 & 1 & 0 & 0& 0 & 0\\
4 & 0 & 1 & 12 & 30 & 47 & 37 & 11 & 1  & 0 & 0 \\
5 & 0 & 0 & 6 & 35 & 92 & 142 & 138 & 67 & 14 &  1 \\
\end{array}}
\end{array}$
\caption{The first values of $\Delta_{n,k}$ for $\Pi^{1,2}_n$, $\Pi^{2,2}_n$ and $\Pi^{3,2}_n$ .} \label{table:Horadam_distribution of degrees}
\end{table}

Also, they
do not (yet) appear in the {\em On-Line Encyclopedia of Integer Sequences}
\cite{oeis}.

\section{Subhypercubes} 
The {\em cube coefficient} of the graph $G$, denoted by $c_k(G)$, is the number of induced subhypercubes $Q_k$ in $G$. The {\em cube number} $c(G)$ is total number of induced subhypercubes in $G$, i.e., $c(G)=\sum_{k\geq 0}c_k(G)$. Observe that $c_0(G)=|V(G)|$ and $c_1(G)=|E(G)|$. Here we determine the cube coefficients $c_{k}$ for the Horadam cube $\Pi^{a,b}_n$.

\begin{theorem} \label{tm: Horadam_cube_coef}
Cube coefficients  $c_{k}(\Pi^{a,b}_n)$ for the Horadam cube $\Pi^{a,b}_n$ satisfy recursive relation \begin{align*}
c_k(\Pi^{a,b}_n)=ac_k(\Pi^{a,b}_{n-1})+bc_k(\Pi^{a,b}_{n-2})+(a-1)c_{k-1}(\Pi^{a,b}_{n-1})+bc_{k-1}(\Pi^{a,b}_{n-2}) 
\end{align*} with initial values $c_{0}(\Pi^{a,b}_n)=s^{a,b}_n$ and $c_{1}(\Pi^{a,b}_n)=a-1$. Furthermore, if $A(x,y)=\sum_{n\geq 0, k\geq 0}c_k(\Pi^{a,b}_n)x^ny^k$ denotes their  generating function, we have \begin{align*}A(x,y)=\dfrac{1}{1-ax-bx^2-(a-1)xy-bx^2y}. 
\end{align*}
\end{theorem}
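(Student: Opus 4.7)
The plan is to exploit the canonical decomposition of Theorem \ref{tm:candec_hor} and classify each induced subhypercube $Q_k\subseteq\Pi^{a,b}_n$ by the set of copies it meets. Recall that $\Pi^{a,b}_n$ consists of the $a$ copies $i\Pi^{a,b}_{n-1}$ ($0\le i\le a-1$) and the $b$ copies $0l\Pi^{a,b}_{n-2}$ ($a\le l\le a+b-1$), linked by $a-1$ matchings between consecutive $i\Pi^{a,b}_{n-1}$, $b-1$ matchings between consecutive $0l\Pi^{a,b}_{n-2}$, and one crossover matching from $0a\Pi^{a,b}_{n-2}$ to $0(a-1)\Pi^{a,b}_{n-2}\subset 0\Pi^{a,b}_{n-1}$.

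The first substantive step is a structural dichotomy: every induced $Q_k$ lies inside a single copy or inside the union of exactly two copies joined by one of the above matchings. I would prove this by noting that in any induced $Q_k\subseteq\Pi^{a,b}_n$ each string-position takes at most two consecutive values --- the $k$ directions of $Q_k$ correspond to $k$ distinct fixed string-coordinates (a short induced-$4$-cycle argument), each oscillating between exactly two consecutive values, while all remaining coordinates are frozen. Applied to the leading letter this bounds the number of $i\Pi^{a,b}_{n-1}$-copies the subcube can touch by two, and when the leading letter is pinned to $0$ the same argument on the second letter handles the $0l\Pi^{a,b}_{n-2}$-copies and the crossover (which arises exactly when the second letter takes values $\{a-1,a\}$).

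The counting is then transparent. Subcubes lying in a single copy contribute $a\,c_k(\Pi^{a,b}_{n-1})+b\,c_k(\Pi^{a,b}_{n-2})$. Each pair of copies joined by a crossing matching forms a prism $P_2\square H$ (for the appropriate smaller Horadam cube $H$), since every crossing matching is a perfect matching realizing an isomorphism between the two copies; induced $Q_k$'s using both halves of such a prism are in bijection with induced $Q_{k-1}$'s in $H$. Summing over the three types of crossings gives $(a-1)c_{k-1}(\Pi^{a,b}_{n-1})+(b-1)c_{k-1}(\Pi^{a,b}_{n-2})+c_{k-1}(\Pi^{a,b}_{n-2})=(a-1)c_{k-1}(\Pi^{a,b}_{n-1})+b\,c_{k-1}(\Pi^{a,b}_{n-2})$, which yields the claimed recurrence. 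The generating function then follows by multiplying the recurrence by $x^n y^k$ and summing over $n,k\ge 0$ with the conventions $c_k(\Pi^{a,b}_m)=0$ for $m<0$ and $c_k(\Pi^{a,b}_0)=\delta_{k,0}$; the base cases collapse into a constant $1$ and the recurrence rearranges to $A(x,y)(1-ax-bx^2-(a-1)xy-bx^2 y)=1$.

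The hardest part I expect is the structural dichotomy, in particular the verification that distinct $Q_k$-directions use distinct string-positions and that each position varies between only two consecutive values. The other subtle point is the bookkeeping for the crossover matching: it is exactly what promotes $b-1$ into the coefficient $b$ of $c_{k-1}(\Pi^{a,b}_{n-2})$, and one must argue that it deserves to be treated on equal footing with the other matchings because it too is a perfect matching realizing $\Pi^{a,b}_{n-2}\cong\Pi^{a,b}_{n-2}$.
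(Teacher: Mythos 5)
Your proposal follows essentially the same route as the paper: count the induced subcubes contained in a single copy of the canonical decomposition, then count those spanning two copies joined by a matching as prisms $K_2\square Q_{k-1}$, and read off the generating function from the recurrence. You are in fact more careful than the paper on two points it leaves implicit, namely the structural dichotomy that an induced subcube meets at most two copies (via the two-consecutive-values-per-coordinate argument) and the explicit accounting of the crossover matching between $0a\Pi^{a,b}_{n-2}$ and $0(a-1)\Pi^{a,b}_{n-2}\subset 0\Pi^{a,b}_{n-1}$, which is exactly what promotes the $b-1$ internal matchings to the coefficient $b$ of $c_{k-1}(\Pi^{a,b}_{n-2})$.
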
 \begin{proof} By Theorem \ref{tm:candec_hor}, the Horadam cube $\Pi^{a,b}_n$ contains $a$ copies of the graph  $\Pi^{a,b}_{n-1}$ and $b$ copies of the graph $\Pi^{a,b}_{n-2}$. Then all induced subhypercubes in those copies contribute to the number of hypercubes of dimension $k$ in $\Pi^{a,b}_n$ with $ac_{k}(\Pi^{a,b}_{n-1})+bc_{k}(\Pi^{a,b}_{n-2})$. Moreover, the hypercube $Q_k=K_2\square Q_{k-1}$ can be part of the two adjacent copies of $\Pi^{a,b}_{n-1}$. Let $Q_{k-1}\subseteq\Pi^{a,b}_{n-1}$ be an induced subhypercube. Then, for $0\leq m\leq a-2$,  appending letters $m$ and $m+1$ to the vertices of $Q_{k-1}\subseteq\Pi^{a,b}_{n-1}$ yields a hypercube $Q_k$ in $\Pi^{a,b}_{n}$. Similar construction can be done for $Q_{k-1}\subseteq\Pi^{a,b}_{n-2}$. Hypercubes constructed as described contribute with $(a-1)c_{k-1}(\Pi^{a,b}_{n-1})+bc_{k-1}(\Pi^{a,b}_{n-2})$. From the recursive relation, one can easily obtain the generating function. 
\end{proof}

Disregarding the dimension of the subhypercube, Theorem \ref{tm: Horadam_cube_coef} gives a simple consequence.
\begin{corollary} Cube numbers $c(\Pi^{a,b}_n)$ for the Horadam cube $\Pi^{a,b}_n$ satisfy recursive relation \begin{align*}
c(\Pi^{a,b}_n)=(2a-1)c(\Pi^{a,b}_{n-1})+2bc(\Pi^{a,b}_{n-2})
\end{align*} with initial conditions $c(\Pi^{a,b}_0)=1$ and $c(\Pi^{a,b}_1)=2a-1$. If $A(x)=\sum_{n\geq 0}c(\Pi^{a,b}_n)x^n$ denote the  generating function, we have \begin{align*}A(x)=\dfrac{1}{1-(2a-1)x-2bx^2}.\end{align*} 
\end{corollary}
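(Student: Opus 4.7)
The plan is to obtain this corollary as a direct specialization of Theorem \ref{tm: Horadam_cube_coef} by ``forgetting'' the dimension parameter $k$. Since by definition $c(\Pi^{a,b}_n)=\sum_{k\geq 0} c_k(\Pi^{a,b}_n)$, I would sum the recursive relation for $c_k(\Pi^{a,b}_n)$ over all $k\geq 0$. In the two terms involving $c_{k-1}$, a reindexing $k-1\mapsto k$ (using that $c_{-1}=0$) turns each of them into $c(\Pi^{a,b}_{n-1})$ or $c(\Pi^{a,b}_{n-2})$, respectively. Combining with the other two terms, the coefficients $a$ and $a-1$ merge to $2a-1$ and the two copies of $b$ merge to $2b$, giving the claimed recurrence
\[
c(\Pi^{a,b}_n)=(2a-1)c(\Pi^{a,b}_{n-1})+2bc(\Pi^{a,b}_{n-2}).
\]

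For the initial conditions, I would note that $\Pi^{a,b}_0$ is a single vertex whose only induced subhypercube is $Q_0$ itself, hence $c(\Pi^{a,b}_0)=1$, while $\Pi^{a,b}_1=P_a$ contains $a$ vertices, $a-1$ edges, and no induced subhypercube of dimension greater than one, so $c(\Pi^{a,b}_1)=a+(a-1)=2a-1$, matching $c_0(\Pi^{a,b}_1)+c_1(\Pi^{a,b}_1)$ from Theorem \ref{tm: Horadam_cube_coef}.

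The generating function then falls out in two equivalent ways. The quickest route is to substitute $y=1$ into the bivariate generating function $A(x,y)$ from Theorem \ref{tm: Horadam_cube_coef}: the denominator $1-ax-bx^2-(a-1)xy-bx^2y$ collapses exactly to $1-(2a-1)x-2bx^2$, yielding the claim immediately. Alternatively, one multiplies $A(x)$ by $1-(2a-1)x-2bx^2$; the constant term is $c(\Pi^{a,b}_0)=1$, the $x^1$ coefficient is $c(\Pi^{a,b}_1)-(2a-1)c(\Pi^{a,b}_0)=0$, and all higher coefficients vanish by the recurrence. There is no substantive obstacle here: once Theorem \ref{tm: Horadam_cube_coef} is granted, this corollary is essentially bookkeeping, which is precisely why it is stated as a corollary rather than a theorem.
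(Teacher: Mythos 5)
Your proposal is correct and matches the paper's (implicit) argument: the paper offers no separate proof, merely noting that the corollary follows by ``disregarding the dimension of the subhypercube,'' i.e., exactly your summation over $k$ (equivalently, setting $y=1$ in $A(x,y)$). The direct verification of the initial values for $\Pi^{a,b}_0$ and $\Pi^{a,b}_1=P_a$ is also the intended bookkeeping.
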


The cube polynomial was first introduced by Bre\v{s}ar, Klav\v{z}ar, and \v{S}krekovski \cite{CubePoly}. For a graph $G$ we define its {\em cube polynomial} as $$C(G,x)=\sum_{k\geq 0}c_k(G)x^k.$$  From the generating function $A(x,y)$, we have the following corollary that provides the cube polynomials for the Horadam cubes. 
\begin{corollary}
The cube polynomials $C(\Pi^{a,b}_n,x)$ satisfy the recursive relation \begin{align*}
C(\Pi^{a,b}_n,x)=(a+(a-1)x)C(\Pi^{a,b}_{n-1},x)+(b+bx)C(\Pi^{a,b}_{n-2},x)
\end{align*} with initial conditions $C(\Pi^{a,b}_0,x)=1$ and $C(\Pi^{a,b}_1,x)=a+(a-1)x$.
\end{corollary}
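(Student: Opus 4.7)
The plan is to derive the recurrence by summing the coefficient-level recurrence from Theorem \ref{tm: Horadam_cube_coef} against the monomials $x^k$. By definition $C(\Pi^{a,b}_n,x)=\sum_{k\geq 0}c_k(\Pi^{a,b}_n)x^k$, so multiplying the identity
\begin{equation*}
c_k(\Pi^{a,b}_n)=ac_k(\Pi^{a,b}_{n-1})+bc_k(\Pi^{a,b}_{n-2})+(a-1)c_{k-1}(\Pi^{a,b}_{n-1})+bc_{k-1}(\Pi^{a,b}_{n-2})
\end{equation*}
by $x^k$ and summing over $k\geq 0$ immediately produces the claim, once we note that $\sum_{k\geq 0}c_{k-1}(\Pi^{a,b}_{m})x^k=x\,C(\Pi^{a,b}_{m},x)$ after reindexing $k\mapsto k-1$ (the $k=0$ term vanishes since $c_{-1}=0$). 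Collecting the four contributions gives $(a+(a-1)x)C(\Pi^{a,b}_{n-1},x)+(b+bx)C(\Pi^{a,b}_{n-2},x)$, as desired.

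For the base cases, first observe that $\Pi^{a,b}_0$ is the trivial one-vertex graph, so $c_0=1$ and $c_k=0$ for all $k\geq 1$, whence $C(\Pi^{a,b}_0,x)=1$. Next, $\Pi^{a,b}_1$ equals the path $P_a$, which has $a$ vertices and $a-1$ edges and contains no induced $4$-cycle (a path is acyclic), so $c_0=a$, $c_1=a-1$, and $c_k=0$ for $k\geq 2$, giving $C(\Pi^{a,b}_1,x)=a+(a-1)x$. These agree with the stated initial conditions.

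Since the result is purely a bookkeeping step from the preceding theorem, there is no real obstacle; the only thing to be careful about is the index shift when passing to a generating-series summation. As a sanity check, one can equivalently substitute $y\mapsto x$ into the bivariate generating function $A(x,y)=\bigl(1-ax-bx^2-(a-1)xy-bx^2y\bigr)^{-1}$, clear denominators to get $(1-(a+(a-1)x)t-(b+bx)t^2)\sum_{n\geq 0}C(\Pi^{a,b}_n,x)t^n=1$ (where $t$ tracks dimension), and read off the same recurrence and initial values by comparing coefficients of $t^n$.
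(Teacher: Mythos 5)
Your proof is correct and takes essentially the same route as the paper, which simply reads the recurrence off the bivariate generating function $A(x,y)$ from Theorem \ref{tm: Horadam_cube_coef}; multiplying the coefficient recurrence by $x^k$ and summing is the same bookkeeping in different clothing, and your verification of the base cases for $\Pi^{a,b}_0$ and $\Pi^{a,b}_1=P_a$ is accurate.
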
 

Table \ref{table: Horadam_cube_poly} shows the first values of the cube polynomials for the Jacobsthal cube and $\Pi^{3,2}_n$.

\begin{table}[h]\centering
$\begin{array}{c|l|l}
n& c(\Pi^{1,2}_n,x) & c(\Pi^{3,2}_n,x)\\ 
\hline
0 &  1 &  1\\
1 &  1 &  2x+3\\
2 &  2x+3 & 4x^2+14x+11 \\
3 &  4x+5 & 8x^3+44 x^2+74x+39\\
4 &  4x^2+14x+11 &16x^4+120x^3+316 x^2+350x+139\\
5 &  12x^2+32x+21&32 x^5+304 x^4+1096 x^3+1884 x^2+1554 x+ 495\\
\end{array}$
\caption{Cube polynomials $C(\Pi^{a,b}_n,x)$ of the Horadam cubes.} \label{table: Horadam_cube_poly}
\end{table}

\section{Horadam cubes are (mostly) Hamiltonian}

At the beginning of this section, it is convenient to recall that any path that visits each vertex in the graph exactly once is called a {\em Hamiltonian path}, while a cycle with the same property is called a {\em Hamiltonian cycle}. A graph that contains a Hamiltonian cycle is {\em Hamiltonian}. In this section, we explore the possibility of extending the results on the existence of the Hamiltonian paths and  Hamiltonian cycles in metallic cubes to the Horadam cubes. The answer turns out to be positive. The following theorem extends the established result regarding the existence of Hamiltonian paths in the metallic cubes to encompass all Horadam cubes. 
\begin{theorem} \label{tm: Horadam_Hamiltonian_path} Let $a,b,n\geq 1$. Then the Horadam cube $\Pi^{a,b}_n$ contains a Hamiltonian path.
\end{theorem}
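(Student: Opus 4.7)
The plan is to prove the theorem by induction on $n$, exploiting the canonical decomposition of Theorem~\ref{tm:candec_hor}. The base cases are $n=1$ and $n=2$. For $n=1$, $\Pi^{a,b}_1=P_a$ is itself a Hamiltonian path. For $n=2$, $\Pi^{a,b}_2$ consists of the $a\times a$ grid on vertices $ij$ with $0\le i,j\le a-1$, together with a pendant path $0(a-1)-0a-0(a+1)-\cdots-0(a+b-1)$ attached at the corner $0(a-1)$. A Hamiltonian path is produced explicitly: start at the pendant end $0(a+b-1)$, traverse the tail down to $0(a-1)$, then run a standard boustrophedon through the $a\times a$ grid, finishing at $(a-1)0$. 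This shows that already in the base case we can insist on the Hamiltonian path having prescribed endpoints, which is crucial for the induction.

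For the inductive step, recall that the canonical decomposition exhibits $\Pi^{a,b}_n$ as the union of $a$ copies $0\Pi^{a,b}_{n-1},\dots,(a-1)\Pi^{a,b}_{n-1}$ wired together as $P_a\square\Pi^{a,b}_{n-1}$, of $b$ copies $0a\Pi^{a,b}_{n-2},\dots,0(a+b-1)\Pi^{a,b}_{n-2}$ wired together as $P_b\square\Pi^{a,b}_{n-2}$, and joined to the first part only through the perfect matching between $0(a-1)\Pi^{a,b}_{n-2}\subseteq 0\Pi^{a,b}_{n-1}$ and $0a\Pi^{a,b}_{n-2}$ given by $0(a-1)\beta\sim 0a\beta$. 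I would assemble a Hamiltonian path in three stages. Stage (I): start at a corner vertex of $0(a+b-1)\Pi^{a,b}_{n-2}$ and snake through the $b$ copies of $\Pi^{a,b}_{n-2}$ by applying the inductive Hamiltonian path inside each copy and reversing its orientation in every other copy (a Gray-code-style pattern), finishing at some vertex $0a\beta$ in $0a\Pi^{a,b}_{n-2}$. Stage (II): cross the bridge $0a\beta\sim 0(a-1)\beta$ into $0\Pi^{a,b}_{n-1}$ and trace a Hamiltonian path of $0\Pi^{a,b}_{n-1}$ that starts at the corresponding vertex $0(a-1)\beta$. Stage (III): leave $0\Pi^{a,b}_{n-1}$ through a matching edge $0\alpha\sim 1\alpha$ and snake through the remaining $a-1$ copies $1\Pi^{a,b}_{n-1},\dots,(a-1)\Pi^{a,b}_{n-1}$ using the inductive Hamiltonian path of $\Pi^{a,b}_{n-1}$ and its reversal in alternating copies, exactly as in the Gray-code construction for the metallic cubes.

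For the three stages to splice together, existence of a Hamiltonian path is not enough; the inductive hypothesis must specify the endpoints. Accordingly, I would strengthen the claim to: for every $n\ge 1$, $\Pi^{a,b}_n$ admits a Hamiltonian path from a distinguished vertex $x_n$ to a distinguished vertex $y_n$, with the family $\{(x_n,y_n)\}_n$ chosen so that (i) the vertex $0(a-1)\beta$ entering Stage~(II) coincides with an admissible starting vertex of a Hamiltonian path of $\Pi^{a,b}_{n-1}$, and (ii) the snake in Stage~(III) terminates at the required $y_n$. A natural candidate is $x_n=0(a+b-1)0^{n-2}$ and $y_n=(a-1)0^{n-1}$ (matching the base case $n=2$), but the precise choice depends on the parities of $a$ and $b$, since these determine which corner of $\Pi^{a,b}_{n-1}$ the boustrophedon in Stage~(III) ends at. The main obstacle is this endpoint bookkeeping: one must verify that the chosen family is consistent with the bipartite parity constraints on $\Pi^{a,b}_n$ (whose two color classes, by the bipartivity theorem, have sizes differing by at most one and hence restrict which vertex pairs can serve as Hamiltonian endpoints), and that the recursion closes uniformly across all four parity combinations of $(a,b)$. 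I expect this to reduce to a short case analysis, after which the three-stage gluing described above produces the required Hamiltonian path.
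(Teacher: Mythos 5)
Your strategy coincides with the paper's: induct on $n$ via the canonical decomposition of Theorem \ref{tm:candec_hor}, strengthen the inductive hypothesis so that it prescribes both endpoints of the Hamiltonian path, snake through the $a$ copies of $\Pi^{a,b}_{n-1}$ and the $b$ copies of $\Pi^{a,b}_{n-2}$, and join the two snakes through a single edge of the matching between $0(a-1)\Pi^{a,b}_{n-2}\subseteq 0\Pi^{a,b}_{n-1}$ and $0a\Pi^{a,b}_{n-2}$. The base cases and the three-stage gluing are exactly what the paper does.

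The gap is that the endpoint bookkeeping you defer to ``a short case analysis'' is in fact the entire technical content of the proof, and the one concrete candidate you offer fails your own parity test. Take $a$ and $b$ both odd: then $s^{a,b}_n$ is odd unless $n\equiv 2\pmod 3$, so for $n\equiv 0,1\pmod 3$ the two color classes of the bipartition differ in size and both endpoints of any Hamiltonian path must lie in the larger class. But $\chi\bigl(0(a+b-1)0^{n-2}\bigr)\equiv a+b-1\equiv 1\pmod 2$ while $\chi\bigl((a-1)0^{n-1}\bigr)\equiv a-1\equiv 0\pmod 2$, so your proposed endpoints lie in opposite classes and cannot be the ends of any Hamiltonian path in those cases. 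The paper resolves this by choosing a different endpoint family in each of the four parity cases, and these are periodic strings rather than $0(a+b-1)0^{n-2}$ and $(a-1)0^{n-1}$: for $a$ odd and $b$ even the endpoints are built from repeated blocks $0(a+b-1)$ with sub-cases on $n\bmod 2$, and for $a,b$ both odd they are built from repeated blocks $0(a+b-1)(a-1)$ with sub-cases on $n\bmod 3$; only for $a$ even does something close to your candidate $(a-1)(a-1)\cdots(a-1)$, $0(a+b-1)(a-1)\cdots(a-1)$ work. One then has to check, case by case, that the terminal vertex of the snake through $P_a\square\Pi^{a,b}_{n-1}$ (which begins with $0(a-1)$) is adjacent to the initial vertex of the snake through $P_b\square\Pi^{a,b}_{n-2}$ (which begins with $0a$), i.e., that the two prescribed endpoints agree in all remaining coordinates. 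Until you exhibit such a family and verify this adjacency in all four parity cases, the induction does not close.
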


\begin{proof} Since, by Theorem \ref{tm:candec_hor}, Horadam cube admits canonical decomposition, i.e., $\Pi^{a,b}_n=P_a\square \Pi^{a,b}_{n-1}\oplus P_b\square \Pi^{a,b}_{n-2}$, it is only natural to construct a Hamiltonian path inductively. We claim that the Horadam cube $\Pi^{a,b}_n$ contains a Hamiltonian path with the following endpoints. In the case of $a$ odd and $b$ even, $\Pi^{a,b}_n$ contains Hamiltonian path with endpoints $0(a+b-1)0(a+b-1)\cdots 0(a+b-1)$ and $(a-1)0(a+b-1)0(a+b-1)\cdots (a+b-1)0$ for even $n$; and $0(a+b-1)0(a+b-1)\cdots 0(a+b-1)0$ and $(a-1)0(a+b-1)0(a+b-1)\cdots 0(a+b-1)$ for odd $n$. In the case of $a$ and $b$ odd, $\Pi^{a,b}_n$ contains Hamiltonian path with endpoints $0(a+b-1)(a-1)0(a+b-1)(a-1)\cdots 0(a+b-1)(a-1)$ and $(a-1)0(a+b-1)\cdots (a-1)0(a+b-1)$ if $n=3m$ for some $m\in \mathbb{N}$, $0(a+b-1)(a-1)0(a+b-1)(a-1)\cdots 0(a+b-1)$ and $(a-1)0(a+b-1)\cdots (a-1)0$ if $n=3m-1$, and $0(a+b-1)(a-1)0(a+b-1)(a-1)\cdots 0$ and $(a-1)0(a+b-1)\cdots (a-1)$ if $n=3m-2$. For $a$ and $b$ even endpoints are $(a-1)(a-1)\cdots (a-1)$ and $0(a+b-1)(a-1)\cdots (a-1)$. Finally, if $a$ is even and $b$ is odd we have endpoints $(a-1)(a-1)\cdots (a-1)$ and $0(a+b-1)\cdots 0(a+b-1)$ for $n$ even, and $(a-1)(a-1)\cdots (a-1)$ and $0(a+b-1)\cdots 0(a+b-1)0$ for $n$ odd. 

The base of induction is easily verified. For $n=1$, graph $\Pi^{a,b}_1$ is a path on $a$ vertices and the claim is true. For $n=2$, graph $\Pi^{a,b}_2$ is an $a\times a$ grid with the path that contains $b$ vertices appended to the vertex $0(a-1)$. Hence, we can construct a Hamiltonian path from the vertex $0(a+b-1)$ to the vertex $(a-1)0$ if $a$ is odd, or to the vertex $(a-1)(a-1)$ if $a$ is even. 

Suppose that for all $k<n$, $\Pi^{a,b}_k$ contains a Hamiltonian path with endpoints as described. First, consider the case of $a$ odd and $b$ even. If $n$ is even, $n-2$ is also even and $n-1$ is odd. By assumption, every subgraph $k\Pi^{a,b}_{n-1}$ and $0l\Pi^{a,b}_{n-2}$ contains a Hamiltonian path with endpoints as described. Since $a$ is odd, by adding the edges between corresponding endpoints of the Hamiltonian paths in the neighboring subgraphs $\Pi^{a,b}_{n-1}$, we can construct the Hamiltonian path in $P_a\square\Pi^{a,b}_{n-1}\subseteq \Pi^{a,b}_n$  with endpoints $(a-1)0(a+b-1)0(a+b-1)\cdots 0(a+b-1)0$ and $0(a-1)0(a+b-1)0(a+b-1)\cdots 0(a+b-1)$. In a similar way we construct a path in $P_b\square\Pi^{a,b}_{n-2}\subseteq \Pi^{a,b}_n$ with endpoints $0a0(a+b-1)0(a+b-1)\cdots 0(a+b-1)$ and $0(a+b-1)0(a+b-1)\cdots 0(a+b-1)$. Since the vertices $0(a-1)0(a+b-1)0(a+b-1)\cdots 0(a+b-1)$ and $0a0(a+b-1)0(a+b-1)\cdots 0(a+b-1)$ are adjacent in $\Pi^{a,b}_n$, we add the edge connecting them to obtain a Hamiltonian path in $\Pi^{a,b}_n$. If $n$ is odd, the path in $P_a\square\Pi^{a,b}_{n-1}\subseteq \Pi^{a,b}_n$ has endpoints $(a-1)0(a+b-1)0(a+b-1)\cdots 0(a+b-1)$ and $0(a-1)0(a+b-1)0(a+b-1)\cdots 0(a+b-1)0$, the path in $P_b\square\Pi^{a,b}_{n-2}\subseteq \Pi^{a,b}_n$ has endpoints $0a0(a+b-1)0(a+b-1)\cdots 0(a+b-1)0$ and $0(a+b-1)0(a+b-1)\cdots 0(a+b-1)0$. Again, since the vertices $0a0(a+b-1)0(a+b-1)\cdots 0(a+b-1)0$ and $0(a-1)0(a+b-1)0(a+b-1)\cdots 0(a+b-1)0$ are adjacent, we add the edge connecting them to obtain a Hamiltonian path in $\Pi^{a,b}_n$.  

In the second case, if $n=3m$, the Hamiltonian path in $P_a\square\Pi^{a,b}_{n-1}\subseteq \Pi^{a,b}_n$ has endpoints $(a-1)0(a+b-1)(a-1)0(a+b-1)(a-1)\cdots 0(a+b-1)$ and $0(a-1)0(a+b-1)\cdots (a-1)0$, while the Hamiltonian path in $P_b\square\Pi^{a,b}_{n-2}\subseteq \Pi^{a,b}_n$ with endpoints $0a0(a+b-1)(a-1)0(a+b-1)(a-1)\cdots (a-1)0$ and $0(a+b-1)(a-1)0(a+b-1)\cdots (a-1)$. Since the vertices $0(a-1)0(a+b-1)\cdots (a-1)0$ and $0a0(a+b-1)(a-1)0(a+b-1)(a-1)\cdots (a-1)0$ are adjacent in $\Pi^{a,b}_n$, we add the edge connecting them to obtain a Hamiltonian path in $\Pi^{a,b}_n$. If $n=3m-1$, we remove the last letter in each endpoint, and the argument is the same. Similarly, if $n=3m-2$, we remove two last letters. 

Now we consider the case where $a$ and $b$ are even. The constructed Hamiltonian path in $P_a\square\Pi^{a,b}_{n-1}\subseteq \Pi^{a,b}_n$ has endpoints $(a-1)(a-1)\cdots (a-1)$ and $0(a-1)\cdots (a-1)$, the Hamiltonian path in $P_b\square\Pi^{a,b}_{n-2}\subseteq \Pi^{a,b}_n$ has endpoints $0a(a-1)\cdots (a-1)$ and $0(a+b-1)(a-1)\cdots (a-1)$. Again, vertices $0(a-1)\cdots (a-1)$ and $0a(a-1)\cdots (a-1)$ are adjacent in $\Pi^{a,b}_n$, we obtained a Hamiltonian path in $\Pi^{a,b}_n$. 

Finally, the last case contains $a$ even and $b$ odd. In this case, the Hamiltonian path in $P_a\square\Pi^{a,b}_{n-1}\subseteq \Pi^{a,b}_n$ has endpoints $(a-1)(a-1)\cdots (a-1)$ and $0(a-1)\cdots (a-1)$, the Hamiltonian path in $P_b\square\Pi^{a,b}_{n-2}\subseteq \Pi^{a,b}_n$ has endpoints $0a(a-1)\cdots (a-1)$ and $0(a+b-1)0(a+b-1)\cdots 0(a+b-1)$ for even $n$ and $0(a+b-1)0(a+b-1)\cdots 0$ for odd $n$. Vertices $0(a-1)\cdots (a-1)$ and $0a(a-1)\cdots (a-1)$ are adjacent in $\Pi^{a,b}_n$, so we have a Hamiltonian path in $\Pi^{a,b}_n$. This last case is demonstrated in Figure \ref{fig: Hamil_path_in_Pi_4^{2,3}}. 

\end{proof}

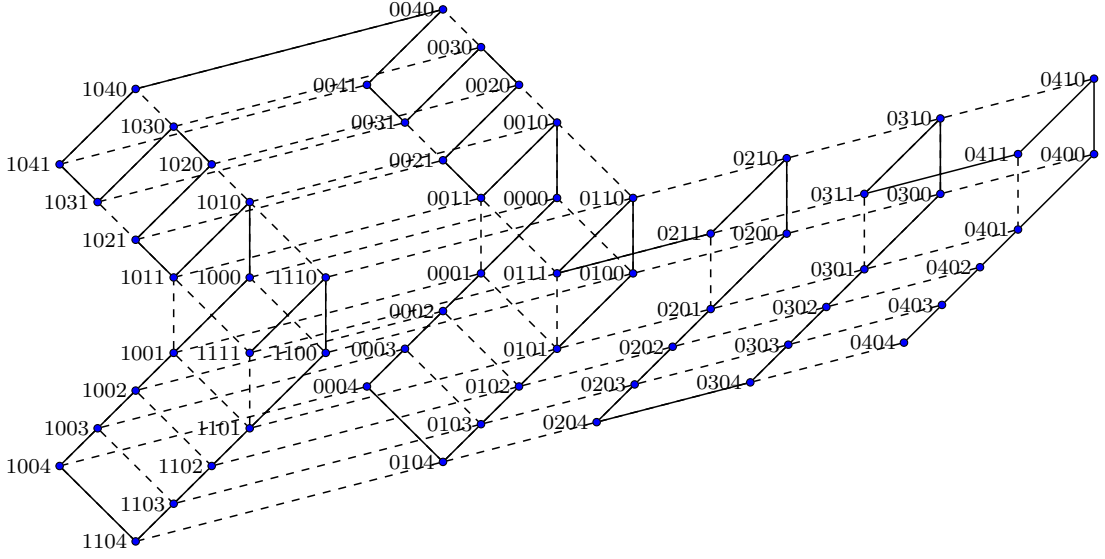
\begin{figure}[h!] \centering \begin{tikzpicture}[scale=1]
\tikzmath{\x1 = 0.35; \y1 =-0.05; \z1=180; \w1=0.2; \xs=-115; \ys=-30; \yss=-5; \t=-0.5; \u=-0.5;
\x2 = \x1 + 1; \y2 =\y1 +3; } 
\scriptsize

\node [label={[label distance=\y1 cm]\z1: $0000$},circle,fill=blue,draw=black,scale=\x1](A1) at (1,2) {};
\node [label={[label distance=\y1 cm]\z1: $0001$},circle,fill=blue,draw=black,scale=\x1](A2) at (0,1) {};
\node [label={[label distance=\y1 cm]\z1: $0101$},circle,fill=blue,draw=black,scale=\x1](A3) at (1,0) {};
\node [label={[label distance=\y1 cm]\z1: $0100$},circle,fill=blue,draw=black,scale=\x1](A4) at (2,1) {};
\node [label={[label distance=\y1 cm]\z1: $0010$},circle,fill=blue,draw=black,scale=\x1](A5) at (1,3) {};
\node [label={[label distance=\y1 cm]\z1: $0011$},circle,fill=blue,draw=black,scale=\x1](A6) at (0,2) {};
\node [label={[label distance=\y1 cm]\z1: $0111$},circle,fill=blue,draw=black,scale=\x1](A7) at (1,1) {};
\node [label={[label distance=\y1 cm]\z1: $0110$},circle,fill=blue,draw=black,scale=\x1](A8) at (2,2) {};
\node [label={[label distance=\y1 cm]\z1: $0002$},circle,fill=blue,draw=black,scale=\x1](A9) at (\t,\t+1) {};
\node [label={[label distance=\y1 cm]\z1: $0003$},circle,fill=blue,draw=black,scale=\x1](A10) at (2*\t,2*\t+1) {};
\node [label={[label distance=\y1 cm]\z1: $0102$},circle,fill=blue,draw=black,scale=\x1](A11) at (\u+1,\u) {};
\node [label={[label distance=\y1 cm]\z1: $0103$},circle,fill=blue,draw=black,scale=\x1](A12) at (2*\u+1,2*\u) {};
\node [label={[label distance=\y1 cm]\z1: $0021$},circle,fill=blue,draw=black,scale=\x1](A13) at (\t,-\t+2) {};
\node [label={[label distance=\y1 cm]\z1: $0031$},circle,fill=blue,draw=black,scale=\x1](A14) at (2*\t,-2*\t+2) {};
\node [label={[label distance=\y1 cm]\z1: $0020$},circle,fill=blue,draw=black,scale=\x1](A15) at (\t+1,-\t+3) {};
\node [label={[label distance=\y1 cm]\z1: $0030$},circle,fill=blue,draw=black,scale=\x1](A16) at (2*\t+1,-2*\t+3) {};
\node [label={[label distance=\y1 cm]\z1: $0040$},circle,fill=blue,draw=black,scale=\x1](A16a) at (3*\t+1,-3*\t+3) {};
\node [label={[label distance=\y1 cm]\z1: $0041$},circle,fill=blue,draw=black,scale=\x1](A16b) at (3*\t,-3*\t+2) {};
\node [label={[label distance=\y1 cm]\z1: $0004$},circle,fill=blue,draw=black,scale=\x1](A16c) at (3*\t,3*\t+1) {};
\node [label={[label distance=\y1 cm]\z1: $0104$},circle,fill=blue,draw=black,scale=\x1](A16d) at (3*\u+1,3*\u) {};

\node [label={[label distance=\y1 cm]\z1: $1000$},xshift=\xs,yshift=\ys,circle,fill=blue,draw=black,scale=\x1](A17) at (1,2) {};
\node [label={[label distance=\y1 cm]\z1: $1001$},xshift=\xs,yshift=\ys,circle,fill=blue,draw=black,scale=\x1](A18) at (0,1) {};
\node [label={[label distance=\y1 cm]\z1: $1101$},xshift=\xs,yshift=\ys,circle,fill=blue,draw=black,scale=\x1](A19) at (1,0) {};
\node [label={[label distance=\y1 cm]\z1: $1100$},xshift=\xs,yshift=\ys,circle,fill=blue,draw=black,scale=\x1](A20) at (2,1) {};
\node [label={[label distance=\y1 cm]\z1: $1010$},xshift=\xs,yshift=\ys,circle,fill=blue,draw=black,scale=\x1](A21) at (1,3) {};
\node [label={[label distance=\y1 cm]\z1: $1011$},xshift=\xs,yshift=\ys,circle,fill=blue,draw=black,scale=\x1](A22) at (0,2) {};
\node [label={[label distance=\y1 cm]\z1: $1111$},xshift=\xs,yshift=\ys,circle,fill=blue,draw=black,scale=\x1](A23) at (1,1) {};
\node [label={[label distance=\y1 cm]\z1: $1110$},xshift=\xs,yshift=\ys,circle,fill=blue,draw=black,scale=\x1](A24) at (2,2) {};
\node [label={[label distance=\y1 cm]\z1: $1002$},xshift=\xs,yshift=\ys,circle,fill=blue,draw=black,scale=\x1](A25) at (\t,\t+1) {};
\node [label={[label distance=\y1 cm]\z1: $1003$},xshift=\xs,yshift=\ys,circle,fill=blue,draw=black,scale=\x1](A26) at (2*\t,2*\t+1) {};
\node [label={[label distance=\y1 cm]\z1: $1102$},xshift=\xs,yshift=\ys,circle,fill=blue,draw=black,scale=\x1](A27) at (\u+1,\u) {};
\node [label={[label distance=\y1 cm]\z1: $1103$},xshift=\xs,yshift=\ys,circle,fill=blue,draw=black,scale=\x1](A28) at (2*\u+1,2*\u) {};
\node [label={[label distance=\y1 cm]\z1: $1021$},xshift=\xs,yshift=\ys,circle,fill=blue,draw=black,scale=\x1](A29) at (\t,-\t+2) {};
\node [label={[label distance=\y1 cm]\z1: $1031$},xshift=\xs,yshift=\ys,circle,fill=blue,draw=black,scale=\x1](A30) at (2*\t,-2*\t+2) {};
\node [label={[label distance=\y1 cm]\z1: $1020$},xshift=\xs,yshift=\ys,circle,fill=blue,draw=black,scale=\x1](A31) at (\t+1,-\t+3) {};
\node [label={[label distance=\y1 cm]\z1: $1030$},xshift=\xs,yshift=\ys,circle,fill=blue,draw=black,scale=\x1](A32) at (2*\t+1,-2*\t+3) {};
\node [label={[label distance=\y1 cm]\z1: $1040$},xshift=\xs,yshift=\ys,circle,fill=blue,draw=black,scale=\x1](A32a) at (3*\t+1,-3*\t+3) {};
\node [label={[label distance=\y1 cm]\z1: $1041$},xshift=\xs,yshift=\ys,circle,fill=blue,draw=black,scale=\x1](A32b) at (3*\t,-3*\t+2) {};
\node [label={[label distance=\y1 cm]\z1: $1004$},xshift=\xs,yshift=\ys,circle,fill=blue,draw=black,scale=\x1](A32c) at (3*\t,3*\t+1) {};
\node [label={[label distance=\y1 cm]\z1: $1104$},xshift=\xs,yshift=\ys,circle,fill=blue,draw=black,scale=\x1](A32d) at (3*\u+1,3*\u) {};

\node [label={[label distance=\y1 cm]\z1: $0201$},xshift=-0.5*\xs,yshift=-0.5*\ys,circle,fill=blue,draw=black,scale=\x1](A33) at (1,0) {};
\node [label={[label distance=\y1 cm]\z1: $0200$},xshift=-0.5*\xs,yshift=-0.5*\ys,circle,fill=blue,draw=black,scale=\x1](A34) at (2,1) {};
\node [label={[label distance=\y1 cm]\z1: $0211$},xshift=-0.5*\xs,yshift=-0.5*\ys,circle,fill=blue,draw=black,scale=\x1](A35) at (1,1) {};
\node [label={[label distance=\y1 cm]\z1: $0210$},xshift=-0.5*\xs,yshift=-0.5*\ys,circle,fill=blue,draw=black,scale=\x1](A36) at (2,2) {};
\node [label={[label distance=\y1 cm]\z1: $0202$},xshift=-0.5*\xs,yshift=-0.5*\ys,circle,fill=blue,draw=black,scale=\x1](A37) at (\u+1,\u) {};
\node [label={[label distance=\y1 cm]\z1: $0203$},xshift=-0.5*\xs,yshift=-0.5*\ys,circle,fill=blue,draw=black,scale=\x1](A38) at (2*\u+1,2*\u) {};
\node [label={[label distance=\y1 cm]\z1: $0204$},xshift=-0.5*\xs,yshift=-0.5*\ys,circle,fill=blue,draw=black,scale=\x1](A38a) at (3*\u+1,3*\u) {};

\node [label={[label distance=\y1 cm]\z1: $0301$},xshift=-\xs,yshift=-\ys,circle,fill=blue,draw=black,scale=\x1](A39) at (1,0) {};
\node [label={[label distance=\y1 cm]\z1: $0300$},xshift=-\xs,yshift=-\ys,circle,fill=blue,draw=black,scale=\x1](A40) at (2,1) {};
\node [label={[label distance=\y1 cm]\z1: $0311$},xshift=-\xs,yshift=-\ys,circle,fill=blue,draw=black,scale=\x1](A41) at (1,1) {};
\node [label={[label distance=\y1 cm]\z1: $0310$},xshift=-\xs,yshift=-\ys,circle,fill=blue,draw=black,scale=\x1](A42) at (2,2) {};
\node [label={[label distance=\y1 cm]\z1: $0302$},xshift=-\xs,yshift=-\ys,circle,fill=blue,draw=black,scale=\x1](A43) at (\u+1,\u) {};
\node [label={[label distance=\y1 cm]\z1: $0303$},xshift=-\xs,yshift=-\ys,circle,fill=blue,draw=black,scale=\x1](A44) at (2*\u+1,2*\u) {};
\node [label={[label distance=\y1 cm]\z1: $0304$},xshift=-\xs,yshift=-\ys,circle,fill=blue,draw=black,scale=\x1](A44a) at (3*\u+1,3*\u) {};

\node [label={[label distance=\y1 cm]\z1: $0401$},xshift=-1.5*\xs,yshift=-1.5*\ys,circle,fill=blue,draw=black,scale=\x1](A45) at (1,0) {};
\node [label={[label distance=\y1 cm]\z1: $0400$},xshift=-1.5*\xs,yshift=-1.5*\ys,circle,fill=blue,draw=black,scale=\x1](A46) at (2,1) {};
\node [label={[label distance=\y1 cm]\z1: $0411$},xshift=-1.5*\xs,yshift=-1.5*\ys,circle,fill=blue,draw=black,scale=\x1](A47) at (1,1) {};
\node [label={[label distance=\y1 cm]\z1: $0410$},xshift=-1.5*\xs,yshift=-1.5*\ys,circle,fill=blue,draw=black,scale=\x1](A48) at (2,2) {};
\node [label={[label distance=\y1 cm]\z1: $0402$},xshift=-1.5*\xs,yshift=-1.5*\ys,circle,fill=blue,draw=black,scale=\x1](A49) at (\u+1,\u) {};
\node [label={[label distance=\y1 cm]\z1: $0403$},xshift=-1.5*\xs,yshift=-1.5*\ys,circle,fill=blue,draw=black,scale=\x1](A50) at (2*\u+1,2*\u) {};
\node [label={[label distance=\y1 cm]\z1: $0404$},xshift=-1.5*\xs,yshift=-1.5*\ys,circle,fill=blue,draw=black,scale=\x1](A51) at (3*\u+1,3*\u) {};

\draw [line width=\w1 mm,dashed, opacity=0.2] (A1)--(A17) (A2)--(A18) (A19)--(A3)--(A33)--(A39)--(A45) (A20)--(A4)--(A34)--(A40)--(A46)  (A5)--(A21)  (A6)--(A22) (A23)--(A7)--(A35)--(A41)  (A24)--(A8)--(A36)--(A42)--(A48)   (A25)--(A9) (A26)--(A10) (A27)--(A11)--(A37)--(A43)--(A49)  (A28)--(A12)--(A38)--(A44)--(A50)  (A13)--(A29) (A14)--(A30) (A15)--(A31) (A16)--(A32) (A1)--(A2)--(A3)--(A4)--(A1)--(A5)--(A6)--(A7)--(A8)--(A5)--(A15)--(A16)--(A14)--(A13)--(A6) (A2)--(A9)--(A10)--(A12)--(A11)--(A3) (A2)--(A6) (A3)--(A7)  (A4)--(A8)  (A9)--(A11) (A13)--(A15)  (A17)--(A18)--(A19)--(A20)--(A17)--(A21)--(A22)--(A23)--(A24)--(A21)--(A31)--(A32)--(A30)--(A29)--(A22) (A18)--(A25)--(A26)--(A32c)--(A32d)--(A28)--(A27)--(A19) (A18)--(A22) (A19)--(A23)  (A20)--(A24)  (A25)--(A27) (A29)--(A31)  (A33)--(A34)--(A36)--(A35)--(A33)--(A37)--(A38)--(A38a) (A39)--(A40)--(A42)--(A41)--(A39)--(A43)--(A44)--(A44a)  (A47)--(A45) (A26)--(A28) (A32)--(A32a)--(A32b)--(A30) (A16)--(A16a)--(A16b)--(A14)  (A10)--(A16c)--(A16d)--(A12) (A16a)--(A32a) (A16b)--(A32b) (A16c)--(A32c) (A51)--(A44a)--(A38a)--(A16d)--(A32d);

\draw [line width=\w1 mm] (A51)--(A50)--(A49)--(A45)--(A46)--(A48)--(A47)--(A41)--(A42)--(A40)--(A39)--(A43)--(A44)--(A44a)--(A38a)--(A38)--(A37)--(A33)--(A34)--(A36)--(A35)--(A7)--(A8)--(A4)--(A3)--(A11)--(A12)--(A16d)--(A16c)--(A10)--(A9)--(A2)--(A1)--(A5)--(A6)--(A13)--(A15)--(A16)--(A14)--(A16b)--(A16a)--(A32a)--(A32b)--(A30)--(A32)--(A31)--(A29)--(A22)--(A21)--(A17)--(A18)--(A25)--(A26)--(A32c)--(A32d)--(A28)--(A27)--(A19)--(A20)--(A24)--(A23) ; 

\end{tikzpicture}  
\caption{The Hamiltonian path in the Horadam cube $\Pi_4^{2,3}$.} \label{fig: Hamil_path_in_Pi_4^{2,3}}
\end{figure}

Metallic cubes are Hamiltonian if $a$ is even and $n$ is odd \cite{metallic}. The next few theorems provide the necessary conditions for the Horadam cube to be Hamiltonian.   

\begin{theorem} Let $a$ and $b$ be even. Then the Horadam cube $\Pi^{a,b}_n$ is Hamiltonian for every $n\geq 3$.\label{tm: Hor_Ham_cycle_a_even_b_even}
\end{theorem}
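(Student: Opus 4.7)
The plan is strong induction on $n$, built around the canonical decomposition from Theorem \ref{tm:candec_hor} together with the Hamiltonian paths supplied by Theorem \ref{tm: Horadam_Hamiltonian_path}. First I would observe that since $a$ and $b$ are both even, the Horadam recurrence $s^{a,b}_n=as^{a,b}_{n-1}+bs^{a,b}_{n-2}$ forces $s^{a,b}_n$ to be even for every $n\geq 1$, so the bipartite parity obstruction to Hamiltonicity is automatically absent.

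For the base cases $n=3$ and $n=4$ I would exhibit Hamiltonian cycles explicitly. In each case the cycle is built by a snake-like traversal: enter the ``pendant'' piece $P_b\square\Pi^{a,b}_{n-2}$ from $0\Pi^{a,b}_{n-1}$ via a cross edge between $0(a-1)\Pi^{a,b}_{n-2}$ and $0a\Pi^{a,b}_{n-2}$, snake through all $b$ copies of $\Pi^{a,b}_{n-2}$, exit back to $0\Pi^{a,b}_{n-1}$ via a second cross edge, and then snake through the remaining $a$ copies of $\Pi^{a,b}_{n-1}$ and close. Evenness of both $a$ and $b$ guarantees that the two snakes terminate at a compatible pair of endpoints; a worked instance like the one computed for $\Pi^{2,2}_3$ in the paper provides the template.

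For the inductive step $n\geq 5$, I would assemble a Hamiltonian cycle of $\Pi^{a,b}_n$ from two pieces: a Hamiltonian cycle $C_1$ of $P_a\square\Pi^{a,b}_{n-1}$ that uses a distinguished edge $e=\{0(a-1)\gamma,\,0(a-1)\delta\}$ lying inside $0(a-1)\Pi^{a,b}_{n-2}\subset 0\Pi^{a,b}_{n-1}$, and a Hamiltonian path $\pi_2$ of $P_b\square\Pi^{a,b}_{n-2}$ whose endpoints $0a\gamma$ and $0a\delta$ both lie in the end-copy $0a\Pi^{a,b}_{n-2}$ of $P_b$. With these in hand, deleting $e$ from $C_1$ and inserting the detour
\[
0(a-1)\gamma \;-\; 0a\gamma \;-\; \cdots \;-\; 0a\delta \;-\; 0(a-1)\delta
\]
supplied by $\pi_2$ yields the desired Hamiltonian cycle of $\Pi^{a,b}_n$. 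Both $C_1$ and $\pi_2$ would be produced by snake-through-the-prism constructions applied to the inductive Hamiltonian cycles of $\Pi^{a,b}_{n-1}$ and $\Pi^{a,b}_{n-2}$ respectively: evenness of $a$ makes the snake for $C_1$ close while traversing the prescribed edge $e$, while evenness of $b$ together with the fact that $0a\Pi^{a,b}_{n-2}$ sits at one end of the path $P_b$ makes the snake for $\pi_2$ terminate with both endpoints in that single copy.

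The hard part, as I see it, is the compatibility of the splice: the edge $e$ appearing in $C_1$ must correspond exactly to the adjacent pair $\gamma,\delta$ used as cross-neighbors of the endpoints of $\pi_2$. Overcoming this coordination issue will likely require strengthening the inductive hypothesis to the statement that $\Pi^{a,b}_n$ admits a Hamiltonian cycle passing through any prescribed edge of a specified type (for instance any edge of a canonical sub-cube), so that $\gamma$ and $\delta$ can be freely specified on each side of the splice. Establishing this ``edge-prescribed'' Hamiltonicity should in turn follow from a more careful snake construction in which the flexibility of zigzagging across the $a$ (respectively $b$) copies at each inductive level is used to rotate the cycle onto any desired edge.
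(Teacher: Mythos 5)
Your overall architecture matches the paper's: canonical decomposition (Theorem \ref{tm:candec_hor}), Hamiltonian paths from Theorem \ref{tm: Horadam_Hamiltonian_path} in the copies, pairing of copies into cycles, and a splice along a pair of parallel edges between $0(a-1)\Pi^{a,b}_{n-2}$ and $0a\Pi^{a,b}_{n-2}$. However, the step you yourself flag as ``the hard part'' is a genuine gap, not a detail: you never establish that the cycle $C_1$ and the path $\pi_2$ can be coordinated so that the deleted edge $e=\{0(a-1)\gamma,0(a-1)\delta\}$ matches the endpoints $0a\gamma,0a\delta$ of the detour. Your proposed fix --- strengthening the induction to ``a Hamiltonian cycle through any prescribed edge of a specified type'' --- is a substantially stronger statement than the theorem itself, is only asserted to ``likely follow'' from a more careful snake construction, and is not proved. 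As written, the argument does not close.

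The paper closes exactly this hole with a cheaper observation that you should be able to adopt: it does not prescribe an edge in advance at all. Because the Hamiltonian paths of Theorem \ref{tm: Horadam_Hamiltonian_path} are produced by one uniform inductive recipe, the paths carried by any two copies of $\Pi^{a,b}_{n-2}$ inside $\Pi^{a,b}_n$ are \emph{identical up to the prefix}; in particular the cycle built on $0a\Pi^{a,b}_{n-2}\oplus 0(a+1)\Pi^{a,b}_{n-2}$ contains an edge $(0a\gamma)(0a\delta)$ precisely when the cycle built on $0\Pi^{a,b}_{n-1}\oplus 1\Pi^{a,b}_{n-1}$ contains $(0(a-1)\gamma)(0(a-1)\delta)$, since the sub-copy $0(a-1)\Pi^{a,b}_{n-2}\subset 0\Pi^{a,b}_{n-1}$ carries the same Hamiltonian path as $0a\Pi^{a,b}_{n-2}$. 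So the parallel edge pair needed for the splice exists automatically, for \emph{some} $\gamma,\delta$, and one merges the $a/2+b/2$ disjoint cycles one ladder rung at a time. Two smaller points: your parity remark about $s^{a,b}_n$ is correct but plays no role in the construction, and the paper's worked example is $\Pi^{2,2}_4$, not $\Pi^{2,2}_3$.
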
 \begin{proof} 
By Theorem \ref{tm:candec_hor}, the Horadam cube $\Pi^{a,b}_n$ admit canonical decomposition, i.e.,  $\Pi^{a,b}_n=P_a\square \Pi^{a,b}_{n-1}\oplus P_b\square \Pi^{a,b}_{n-2}$. There are even number of copies of the graphs $\Pi^{a,b}_{n-1}$ and $\Pi^{a,b}_{n-2}$ in $\Pi^{a,b}_n$, and, by Theorem \ref{tm: Horadam_Hamiltonian_path}, each copy contains a Hamiltonian path. Consider the subgraphs $0\Pi^{a,b}_{n-1}$ and $1\Pi^{a,b}_{n-1}$. Similar to the case of metallic cubes, we join the endpoints of  Hamiltonian paths in those subgraphs to obtain a Hamiltonian cycle in the subgraph $0\Pi^{a,b}_{n-1}\oplus 1\Pi^{a,b}_{n-1}$. A similar construction can be done for the remaining copies of subgraphs $\Pi^{a,b}_{n-1}$ as well as for the $b$ copies of the subgraph $\Pi^{a,b}_{n-2}$. Note that if some edge is part of a Hamiltonian cycle in one copy of the subgraph $\Pi^{a,b}_{n-1}$, then it is part of a Hamiltonian cycle in every copy of $\Pi^{a,b}_{n-1}$. The same observation holds for the subgraphs $\Pi^{a,b}_{n-2}$. Also, note that if the edge $(0a\gamma)(0a\delta)$ lies in the cycle in $0a\Pi^{a,b}_{n-2}$ for some $\gamma,\delta \in\mathcal{S}^{a,b}_{n-2}$, then the edge $(0(a-1)\gamma)(0(a-1)\delta)$ belongs to the cycle in $0\Pi^{a,b}_{n-1}$. The latter property holds because the Hamiltonian paths are constructed inductively, so the Hamiltonian paths in $0(a-1)\Pi^{a,b}_{n-2}$ and $0a\Pi^{a,b}_{n-2}$ are constructed in the same way. We construct a Hamiltonian cycle in $\Pi^{a,b}_n$ in following way: if the edge $(1\alpha)(1\beta)$ is part of a Hamiltonian cycle of the subgraph $1\Pi^{a,b}_{n-1}$ for some $\alpha,\beta\in\mathcal{S}^{a,b}_{n-1}$, then the edge  $(2\alpha)(2\beta)$ is part of a Hamiltonian cycle of the subgraph $2\Pi^{a,b}_{n-1}$. Removing the edges $(1\alpha)(1\beta)$ and $(2\alpha)(2\beta)$, and adding new edges $(0\alpha)(1\alpha)$ and $(0\beta)(1\beta)$ yields a Hamiltonian cycle in $0\Pi^{a,b}_{n-1}\oplus1\Pi^{a,b}_{n-1}\oplus2\Pi^{a,b}_{n-1}\oplus3\Pi^{a,b}_{n-1}$. Since the numbers $a$ and $b$ are even, it is clear that this method can be further extended to obtain a Hamiltonian cycle of the $\Pi^{a,b}_n$.
\end{proof} As an example,  Figure \ref{fig:Hamil_cycle_in_Pi_4^{2,2}} shows the Hamiltonian cycle in the Horadam cube $\Pi^{2,2}_4$. \begin{figure}[h!] \centering \begin{tikzpicture}[scale=1]
\tikzmath{\x1 = 0.35; \y1 =-0.05; \z1=180; \w1=0.2; \xs=-125; \ys=-30; \yss=-5; \t=-0.5; \u=-0.5;
\x2 = \x1 + 1; \y2 =\y1 +3; } 
\scriptsize

\node [label={[label distance=\y1 cm]\z1: $0000$},circle,fill=blue,draw=black,scale=\x1](A1) at (1,2) {};
\node [label={[label distance=\y1 cm]\z1: $0001$},circle,fill=blue,draw=black,scale=\x1](A2) at (0,1) {};
\node [label={[label distance=\y1 cm]\z1: $0101$},circle,fill=blue,draw=black,scale=\x1](A3) at (1,0) {};
\node [label={[label distance=\y1 cm]\z1: $0100$},circle,fill=blue,draw=black,scale=\x1](A4) at (2,1) {};
\node [label={[label distance=\y1 cm]\z1: $0010$},circle,fill=blue,draw=black,scale=\x1](A5) at (1,3) {};
\node [label={[label distance=\y1 cm]\z1: $0011$},circle,fill=blue,draw=black,scale=\x1](A6) at (0,2) {};
\node [label={[label distance=\y1 cm]\z1: $0111$},circle,fill=blue,draw=black,scale=\x1](A7) at (1,1) {};
\node [label={[label distance=\y1 cm]\z1: $0110$},circle,fill=blue,draw=black,scale=\x1](A8) at (2,2) {};
\node [label={[label distance=\y1 cm]\z1: $0002$},circle,fill=blue,draw=black,scale=\x1](A9) at (\t,\t+1) {};
\node [label={[label distance=\y1 cm]\z1: $0003$},circle,fill=blue,draw=black,scale=\x1](A10) at (2*\t,2*\t+1) {};
\node [label={[label distance=\y1 cm]\z1: $0102$},circle,fill=blue,draw=black,scale=\x1](A11) at (\u+1,\u) {};
\node [label={[label distance=\y1 cm]\z1: $0103$},circle,fill=blue,draw=black,scale=\x1](A12) at (2*\u+1,2*\u) {};
\node [label={[label distance=\y1 cm]\z1: $0021$},circle,fill=blue,draw=black,scale=\x1](A13) at (\t,-\t+2) {};
\node [label={[label distance=\y1 cm]\z1: $0031$},circle,fill=blue,draw=black,scale=\x1](A14) at (2*\t,-2*\t+2) {};
\node [label={[label distance=\y1 cm]\z1: $0020$},circle,fill=blue,draw=black,scale=\x1](A15) at (\t+1,-\t+3) {};
\node [label={[label distance=\y1 cm]\z1: $0030$},circle,fill=blue,draw=black,scale=\x1](A16) at (2*\t+1,-2*\t+3) {};

\node [label={[label distance=\y1 cm]\z1: $1000$},xshift=\xs,yshift=\ys,circle,fill=blue,draw=black,scale=\x1](A17) at (1,2) {};
\node [label={[label distance=\y1 cm]\z1: $1001$},xshift=\xs,yshift=\ys,circle,fill=blue,draw=black,scale=\x1](A18) at (0,1) {};
\node [label={[label distance=\y1 cm]\z1: $1101$},xshift=\xs,yshift=\ys,circle,fill=blue,draw=black,scale=\x1](A19) at (1,0) {};
\node [label={[label distance=\y1 cm]\z1: $1100$},xshift=\xs,yshift=\ys,circle,fill=blue,draw=black,scale=\x1](A20) at (2,1) {};
\node [label={[label distance=\y1 cm]\z1: $1010$},xshift=\xs,yshift=\ys,circle,fill=blue,draw=black,scale=\x1](A21) at (1,3) {};
\node [label={[label distance=\y1 cm]\z1: $1011$},xshift=\xs,yshift=\ys,circle,fill=blue,draw=black,scale=\x1](A22) at (0,2) {};
\node [label={[label distance=\y1 cm]\z1: $1111$},xshift=\xs,yshift=\ys,circle,fill=blue,draw=black,scale=\x1](A23) at (1,1) {};
\node [label={[label distance=\y1 cm]\z1: $1110$},xshift=\xs,yshift=\ys,circle,fill=blue,draw=black,scale=\x1](A24) at (2,2) {};
\node [label={[label distance=\y1 cm]\z1: $1002$},xshift=\xs,yshift=\ys,circle,fill=blue,draw=black,scale=\x1](A25) at (\t,\t+1) {};
\node [label={[label distance=\y1 cm]\z1: $1003$},xshift=\xs,yshift=\ys,circle,fill=blue,draw=black,scale=\x1](A26) at (2*\t,2*\t+1) {};
\node [label={[label distance=\y1 cm]\z1: $1102$},xshift=\xs,yshift=\ys,circle,fill=blue,draw=black,scale=\x1](A27) at (\u+1,\u) {};
\node [label={[label distance=\y1 cm]\z1: $1103$},xshift=\xs,yshift=\ys,circle,fill=blue,draw=black,scale=\x1](A28) at (2*\u+1,2*\u) {};
\node [label={[label distance=\y1 cm]\z1: $1021$},xshift=\xs,yshift=\ys,circle,fill=blue,draw=black,scale=\x1](A29) at (\t,-\t+2) {};
\node [label={[label distance=\y1 cm]\z1: $1031$},xshift=\xs,yshift=\ys,circle,fill=blue,draw=black,scale=\x1](A30) at (2*\t,-2*\t+2) {};
\node [label={[label distance=\y1 cm]\z1: $1020$},xshift=\xs,yshift=\ys,circle,fill=blue,draw=black,scale=\x1](A31) at (\t+1,-\t+3) {};
\node [label={[label distance=\y1 cm]\z1: $1030$},xshift=\xs,yshift=\ys,circle,fill=blue,draw=black,scale=\x1](A32) at (2*\t+1,-2*\t+3) {};

\node [label={[label distance=\y1 cm]\z1: $0201$},xshift=-0.5*\xs,yshift=-0.5*\ys,circle,fill=blue,draw=black,scale=\x1](A33) at (1,0) {};
\node [label={[label distance=\y1 cm]\z1: $0200$},xshift=-0.5*\xs,yshift=-0.5*\ys,circle,fill=blue,draw=black,scale=\x1](A34) at (2,1) {};
\node [label={[label distance=\y1 cm]\z1: $0211$},xshift=-0.5*\xs,yshift=-0.5*\ys,circle,fill=blue,draw=black,scale=\x1](A35) at (1,1) {};
\node [label={[label distance=\y1 cm]\z1: $0210$},xshift=-0.5*\xs,yshift=-0.5*\ys,circle,fill=blue,draw=black,scale=\x1](A36) at (2,2) {};
\node [label={[label distance=\y1 cm]\z1: $0202$},xshift=-0.5*\xs,yshift=-0.5*\ys,circle,fill=blue,draw=black,scale=\x1](A37) at (\u+1,\u) {};
\node [label={[label distance=\y1 cm]\z1: $0203$},xshift=-0.5*\xs,yshift=-0.5*\ys,circle,fill=blue,draw=black,scale=\x1](A38) at (2*\u+1,2*\u) {};

\node [label={[label distance=\y1 cm]\z1: $0301$},xshift=-\xs,yshift=-\ys,circle,fill=blue,draw=black,scale=\x1](A39) at (1,0) {};
\node [label={[label distance=\y1 cm]\z1: $0300$},xshift=-\xs,yshift=-\ys,circle,fill=blue,draw=black,scale=\x1](A40) at (2,1) {};
\node [label={[label distance=\y1 cm]\z1: $0311$},xshift=-\xs,yshift=-\ys,circle,fill=blue,draw=black,scale=\x1](A41) at (1,1) {};
\node [label={[label distance=\y1 cm]\z1: $0310$},xshift=-\xs,yshift=-\ys,circle,fill=blue,draw=black,scale=\x1](A42) at (2,2) {};
\node [label={[label distance=\y1 cm]\z1: $0302$},xshift=-\xs,yshift=-\ys,circle,fill=blue,draw=black,scale=\x1](A43) at (\u+1,\u) {};
\node [label={[label distance=\y1 cm]\z1: $0303$},xshift=-\xs,yshift=-\ys,circle,fill=blue,draw=black,scale=\x1](A44) at (2*\u+1,2*\u) {};

\draw [line width=\w1 mm,dashed, opacity=0.2]   (A19)--(A3)--(A33)--(A39) (A20)--(A4)--(A34)--(A40)  (A5)--(A21)  (A6)--(A22) (A23)--(A7)--(A35)--(A41)  (A24)--(A8)--(A36)--(A42)   (A25)--(A9) (A26)--(A10) (A27)--(A11)--(A37)--(A43)  (A28)--(A12)--(A38)--(A44) (A13)--(A29) (A14)--(A30) (A15)--(A31) (A16)--(A32) (A1)--(A2)--(A3)--(A4)--(A1)--(A5)--(A6)--(A7)--(A8)--(A5)--(A15)--(A16)--(A14)--(A13)--(A6) (A2)--(A9)--(A10)--(A12)--(A11)--(A3) (A2)--(A6) (A3)--(A7)  (A4)--(A8)  (A9)--(A11) (A13)--(A15)  (A17)--(A18)--(A19)--(A20)--(A17)--(A21)--(A22)--(A23)--(A24)--(A21)--(A31)--(A32)--(A30)--(A29)--(A22) (A18)--(A25)--(A26)--(A28)--(A27)--(A19) (A18)--(A22) (A19)--(A23)  (A20)--(A24)  (A25)--(A27) (A29)--(A31)  (A33)--(A34)--(A36)--(A35)--(A33)--(A37)--(A38) (A39)--(A40)--(A42)--(A41)--(A39)--(A43)--(A44)   (A26)--(A28) (A32)--(A30) (A16)--(A14)   ;

\draw [line width=\w1 mm] (A41)--(A42)--(A40)--(A39)--(A43)--(A44)  (A38)--(A37)   (A33)--(A34)--(A36)--(A35)  (A7)--(A8)--(A4)--(A3)  (A11)--(A12)--(A10)--(A9)--(A2)--(A1)--(A5)--(A6)--(A13)--(A15)--(A16)--(A14) (A30)--(A32)--(A31)--(A29)--(A22)--(A21)--(A17)--(A18)--(A25)--(A26)--(A28)--(A27)--(A19)--(A20)--(A24)--(A23) ;


 \draw [line width=\w1 mm] (A14)--(A30)  (A7)--(A23) (A38)--(A44) (A35)--(A41); 

\draw [line width=\w1 mm] (A11)--(A37)  (A3)--(A33);

\end{tikzpicture}  
\caption{The Hamiltonian cycle in the Horadam cube $\Pi_4^{2,2}$.} \label{fig:Hamil_cycle_in_Pi_4^{2,2}}
\end{figure}
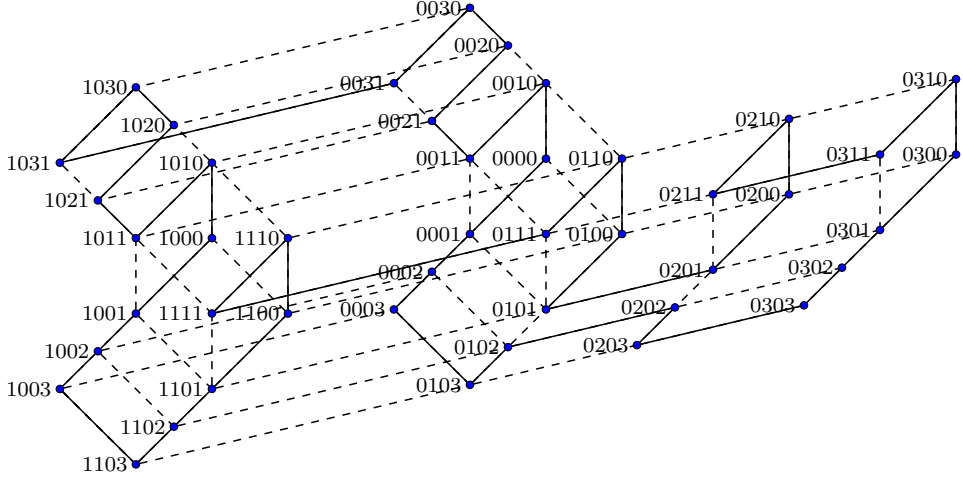

The next theorem explores the existence of the Hamiltonian cycle in cases of $a$ even and $b$ odd. Setting $b=1$ immediately yields the result for the metallic cubes \cite[Theorem 12]{metallic}. The general construction is very similar; we refer to that paper for more details.

\begin{theorem} Let $a$ be even, $b$ odd and $n>1$ odd. Then the Horadam cube $\Pi^{a,b}_n$ is Hamiltonian. \label{tm:Hor_Ham_cycle_a_even_b_odd}
\end{theorem}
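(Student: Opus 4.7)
The plan is to induct on odd $n\geq 3$. The base case $n=3$ is verified directly: by Theorem \ref{tm:candec_hor}, $\Pi^{a,b}_3$ decomposes into $a$ copies of $\Pi^{a,b}_2$ (an $a\times a$ grid with a $P_b$-tail attached at $0(a-1)$) and $b$ copies of $\Pi^{a,b}_1=P_a$, and one can trace an explicit Hamiltonian cycle by snaking through an even number of copies on each side and closing via the bridge edge joining $0(a-1)0$ to $0a0$. For the inductive step, assume the claim for all smaller odd dimensions; in particular $\Pi^{a,b}_{n-2}$ admits a Hamiltonian cycle.

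The main construction follows the blueprint of the metallic case. Using $\Pi^{a,b}_n=P_a\square\Pi^{a,b}_{n-1}\oplus P_b\square\Pi^{a,b}_{n-2}$ from Theorem \ref{tm:candec_hor}, first observe that since $n-1$ is even and $a$ is even, Theorem \ref{tm: Horadam_Hamiltonian_path} supplies a Hamiltonian path in each copy $k\Pi^{a,b}_{n-1}$ whose endpoints are compatible between consecutive copies. Pair the copies as $(2j\Pi^{a,b}_{n-1},(2j+1)\Pi^{a,b}_{n-1})$ for $0\leq j\leq a/2-1$ and close the two paths of each pair into a Hamiltonian cycle using the corresponding transverse bridge edges, obtaining $a/2$ disjoint Hamiltonian cycles covering $P_a\square\Pi^{a,b}_{n-1}$. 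Apply the same procedure to $P_b\square\Pi^{a,b}_{n-2}$: pair $0(a+2j)\Pi^{a,b}_{n-2}$ with $0(a+2j+1)\Pi^{a,b}_{n-2}$ for $0\leq j\leq(b-3)/2$, yielding $(b-1)/2$ cycles; the leftover copy $0(a+b-1)\Pi^{a,b}_{n-2}$ is covered by a Hamiltonian cycle obtained from the inductive hypothesis.

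It remains to merge all these disjoint cycles into one, using the standard two-edge swap: if cycles $C_1,C_2$ sit in adjacent subcopies and contain respective parallel edges $uv$ and $u'v'$ with $uu',vv'\in E(\Pi^{a,b}_n)$, then deleting $uv,u'v'$ and inserting $uu',vv'$ produces a single cycle on $V(C_1)\cup V(C_2)$. I would cascade these swaps first along the $a/2$ cycles inside $P_a\square\Pi^{a,b}_{n-1}$, then along the $(b-1)/2$ cycles in $P_b\square\Pi^{a,b}_{n-2}$, then join the resulting two large cycles via a swap across the bridge from $0(a-1)\Pi^{a,b}_{n-2}\subset 0\Pi^{a,b}_{n-1}$ to $0a\Pi^{a,b}_{n-2}$, and finally absorb the inductively-supplied cycle on $0(a+b-1)\Pi^{a,b}_{n-2}$ through its interface with $0(a+b-2)\Pi^{a,b}_{n-2}$.

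The main obstacle is this last merge. The earlier swaps can be controlled by pointing to explicit parallel edges shared between cycles built from the Theorem \ref{tm: Horadam_Hamiltonian_path} paths, but the cycle on $0(a+b-1)\Pi^{a,b}_{n-2}$ is handed down opaquely by the induction. To deal with this I would strengthen the inductive hypothesis to assert not merely the existence of a Hamiltonian cycle in $\Pi^{a,b}_m$ for odd $m$, but that such a cycle uses a designated anchor edge — for instance an edge of the form $(0\alpha,1\alpha)$ for some explicit $\alpha\in\mathcal{S}^{a,b}_{m-1}$ — chosen so that it is naturally mirrored in the canonical subcopies of the next stage. With this invariant the final swap is always available and the induction closes; verifying the strengthened invariant in the base case and preserving it through each merge is the only non-routine work, the remainder being a direct analogue of the $b=1$ argument for metallic cubes.
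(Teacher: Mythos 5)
Your proposal follows essentially the same route as the paper: decompose via Theorem \ref{tm:candec_hor}, pair up the $a$ copies of $\Pi^{a,b}_{n-1}$ and $b-1$ copies of $\Pi^{a,b}_{n-2}$ into cycles built from the Hamiltonian paths of Theorem \ref{tm: Horadam_Hamiltonian_path}, cover the leftover copy $0(a+b-1)\Pi^{a,b}_{n-2}$ by the inductive hypothesis (valid since $n-2$ is odd), and merge everything by two-edge swaps. Your explicit anchor-edge strengthening of the induction is a sound way to make rigorous the final merge, which the paper handles more informally by appealing to the uniformity of the inductive construction across copies.
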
 
\begin{proof} Because $a$ is even and $\Pi^{a,b}_n=P_a\square \Pi^{a,b}_{n-1}\oplus P_b\square \Pi^{a,b}_{n-2}$, we can construct Hamiltonian cycle in the subgraph $P_a\square \Pi^{a,b}_{n-1}\oplus P_{b-1}\square \Pi^{a,b}_{n-2}$ in the same manner as in the proof of Theorem \ref{tm: Hor_Ham_cycle_a_even_b_even}. Since $\Pi^{a,b}_1=P_a$, the Hamiltonian cycle in  $P_a\square \Pi^{a,b}_{2}\oplus P_{b-1}\square \Pi^{a,b}_{1}$ can be extended to the cycle in the Horadam cube $\Pi^{a,b}_3$.  Furthermore, $\Pi^{a,b}_2$ is $a\times a$ grid with a path $P_b$ appended to the vertex $0(a-1)$. More precisely, $\Pi^{a,b}_2=P_a^2\oplus P_b$. By the same argument, we can construct a Hamiltonian cycle in $P_a\square \Pi^{a,b}_{3}\oplus P_{b-1}\square \Pi^{a,b}_{2}$ and in $P_a^2\subseteq \Pi^{a,b}_2$. It is easy to verify that those two cycles can be merged into a single cycle in $P_a\square \Pi^{a,b}_{3}\oplus P_{b-1}\square \Pi^{a,b}_{2}\oplus P_a^2$. By Theorem \ref{tm: Horadam_Hamiltonian_path}, the path subgraph induced by vertices that start with $0(a+b-2)$ is part of that cycle. Since $b$ is odd, we can extend the cycle to visit all vertices of the neighboring path induced by vertices that start with $0(a+b-1)$ but one. Thus, we obtained a cycle that visits all vertices but one in $\Pi^{a,b}_4$. 

Now we proceed inductively. Regardless of the parity of $n$, the subgraph $P_a\square \Pi^{a,b}_{n-1}\oplus P_{b-1}\square \Pi^{a,b}_{n-2}$ in $\Pi^{a,b}_{n}$ always contains a Hamiltonian cycle. If $n$ is odd, then $n-2$ is odd too, and the last copy of  $\Pi^{a,b}_{n-2}$ contains a Hamiltonian cycle which can be merged to obtain a cycle in $\Pi^{a,b}_{n}$. If $n$ is even, a similar construction yields a cycle that visits all vertices but one. This completes the proof.  
\end{proof}

In the next theorem, we consider the last remaining case, where $a$ and $b$ are both odd. 

\begin{theorem}
If $a$ and $b$ are odd and $n=3k-1,k\geq 2$, then the Horadam cube $\Pi^{a,b}_n$ is Hamiltonian. \label{tm: Hor_Ham_cycle_a_odd_b_odd}
\end{theorem}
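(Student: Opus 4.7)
The plan is to proceed by induction on $k$. The base case $k=2$, i.e., $n=5$, is verified directly by explicit construction: iteratively apply the canonical decomposition to $\Pi^{a,b}_5$ and stitch together the Hamiltonian paths provided by Theorem~\ref{tm: Horadam_Hamiltonian_path} in the subgraphs $\Pi^{a,b}_4$, $\Pi^{a,b}_3$, $\Pi^{a,b}_2$. The modular condition $n = 3k-1$ is crucial because it forces $n-3 = 3(k-1)-1$, so the inductive hypothesis delivers a Hamiltonian cycle in $\Pi^{a,b}_{n-3}$, which will be exploited in the step from $k-1$ to $k$.

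For the inductive step ($k\geq 3$), apply the canonical decomposition of Theorem~\ref{tm:candec_hor} to obtain $a$ copies $k\Pi^{a,b}_{n-1}$ ($k=0,\dots,a-1$) and $b$ copies $0l\Pi^{a,b}_{n-2}$ ($l=a,\dots,a+b-1$). Since $a-1$ and $b-1$ are both even, pair the copies $k\Pi^{a,b}_{n-1}$ for $k=1,\dots,a-1$ into $(a-1)/2$ adjacent pairs and the copies $0(a+l)\Pi^{a,b}_{n-2}$ for $l=1,\dots,b-1$ into $(b-1)/2$ adjacent pairs. In each such pair, use the matching-endpoint Hamiltonian paths from Theorem~\ref{tm: Horadam_Hamiltonian_path} joined by two bridge edges to form a Hamiltonian cycle of that pair, as in the proof of Theorem~\ref{tm: Hor_Ham_cycle_a_even_b_even}. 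Two unpaired copies remain: $0\Pi^{a,b}_{n-1}$ and $0a\Pi^{a,b}_{n-2}$, linked through the bridge between $0(a-1)\Pi^{a,b}_{n-2}\subset 0\Pi^{a,b}_{n-1}$ and $0a\Pi^{a,b}_{n-2}$.

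To absorb this unpaired remainder, apply the canonical decomposition once more inside $0\Pi^{a,b}_{n-1}$: it splits into $a$ copies $0k\Pi^{a,b}_{n-2}$ and $b$ copies $00l\Pi^{a,b}_{n-3}$. Augmenting the first family with $0a\Pi^{a,b}_{n-2}$ yields a chain of $a+1$ copies of $\Pi^{a,b}_{n-2}$, and since $a+1$ is even these pair cleanly into $(a+1)/2$ pairs, each admitting a Hamiltonian cycle by the same trick. For the $b$ (odd) copies of $\Pi^{a,b}_{n-3}$, pair the $b-1$ copies $00(a+l)\Pi^{a,b}_{n-3}$ for $l=1,\dots,b-1$, and traverse the remaining copy $00a\Pi^{a,b}_{n-3}$ using the Hamiltonian cycle supplied by the inductive hypothesis. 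Finally, merge all the partial cycles into a single Hamiltonian cycle by swapping one edge in each partial cycle for two bridge edges to a neighboring partial cycle.

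The main obstacle is this final merging stage: one must verify that in every partial cycle there is a specific edge between corresponding vertices of adjacent copies available for swapping, and that these swaps can be scheduled to produce a single Hamiltonian cycle rather than a disjoint union. This requires delicate bookkeeping because the endpoint formulas of Theorem~\ref{tm: Horadam_Hamiltonian_path} split into three subcases according to $n \bmod 3$, and the merges cross the four dimensions $n, n-1, n-2, n-3$. The modular condition $n = 3k-1$ is exactly what makes the endpoint conventions in these dimensions compatible so that the merges close up consistently, which is presumably why the theorem is restricted to $n$ of the form $3k-1$ rather than allowing all $n$.
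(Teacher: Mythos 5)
Your proposal follows essentially the same route as the paper: induction in steps of three on $n=3k-1$, pairing off the $a-1$ and $b-1$ outer copies, re-decomposing the leftover $0\Pi^{a,b}_{n-1}\oplus 0a\Pi^{a,b}_{n-2}$ into $a+1$ copies of $\Pi^{a,b}_{n-2}$ plus $b$ copies of $\Pi^{a,b}_{n-3}$, and invoking the inductive hypothesis at dimension $n-3=3(k-1)-1$. The paper is similarly terse about the final merging and about the base case $n=5$ (which it settles by splitting $P_b\square\Pi^{a,b}_2$ into two grids of even order), so your plan matches it in both substance and level of detail.
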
 \begin{proof}
By Theorem \ref{tm:candec_hor}, $\Pi^{a,b}_5=P_{a-1}\square\Pi^{a,b}_{4}\oplus\Pi^{a,b}_{4} \oplus\Pi^{a,b}_3\oplus P_{b-1}\square \Pi^{a,b}_{3}$. By Theorems \ref{tm: Horadam_Hamiltonian_path} and \ref{tm: Hor_Ham_cycle_a_even_b_even}, whenever the number of copies in the decomposition of the Horadam cube is even, there is a Hamiltonian cycle in that subgraph. Since $a-1$ and $b-1$ are even, there is a cycle that visits all vertices in the subgraphs $P_{a-1}\square\Pi^{a,b}_{4}$ and $P_{b-1}\square\Pi^{a,b}_{3}$. More precisely, there is a cycle in the subgraph $k\Pi^{a,b}_4$ for $1\leq a-1$ and a cycle in the subgraph $0l\Pi^{a,b}_3$ for $a+1\leq l\leq a+b-1$. Moreover, the remaining copies $0\Pi^{a,b}_4$ and $0a\Pi^{a,b}_3$ can be further decomposed as $\Pi^{a,b}_{4}\oplus \Pi^{a,b}_{3}=P_{a+1}\square\Pi^{a,b}_{3}\oplus P_b\square \Pi^{a,b}_{2}$, where $a+1$ is even. Thus we have a Hamiltonian cycle in the subgraph $0(a-1)\Pi^{a,b}_{3}\oplus0a\Pi^{a,b}_{3} \cdots\oplus 0(a+b-1)$. To obtain a Hamiltonian cycle in $\Pi^{a,b}_5$ we just need to verify that a cycle constructed so far can be extended to the subgraph $P_b\square \Pi^{a,b}_{2}$ induced by vertices that start with $00l$ for $a \leq l\leq a+b-1$. Since $\Pi^{a,b}_2$ is $a\times a$ grid with the path on $b$ vertices appended to the vertex $0(a-1)$, the subgraph $P_b\square\Pi^{a,b}_2$ can be decomposed into two grids, $P_{a+b}\square P_{b}$, induced by vertices $00l0k$ for $0\leq k\leq a+b-1$, and $a\leq l\leq a+b-1$, and $P_{a-1}\square P_{a}$. Both grids have an even number of vertices. The construction of the cycle so far is inductive, hence the path subgraph $10a0k$ for $0\leq k\leq a+b-1$ is part of the constructed cycle and it can be extended to encompass the grid $P_{a+b}\square P_{b}$. A similar argument holds for the grid $P_{a-1}\square P_{a}$. Thus, the cycle constructed so far can be extended to the full Hamiltonian cycle in $\Pi^{a,b}_5$. As an example, Figure \ref{fig: Hamil_cycle_in_Pi_5^{3,3}} shows an extension of the cycle to the subgraph $P_3\square \Pi^{3,3}_2\subset \Pi^{3,3}_5$, and it is clear that the extension can be done whenever $a$ and $b$ are odd. The full line denotes the constructed Hamiltonian cycle and the dashed line indicates the extension.

\begin{figure}[h!] \centering \begin{tikzpicture}[scale=0.5]
\tikzmath{\x1 = 0.35; \y1 =-0.05; \z1=90; \w1=0.2; \xs=50; \ys=-30; \yss=-5;
\x2 = \x1 + 1; \y2 =\y1 +3; } 
\scriptsize
\node [label={[label distance=\y1 cm]\z1: $10300$},circle,fill=blue,draw=black,scale=\x1](A1) at (0,4) {};
\node [label={[label distance=\y1 cm]\z1: $10310$},circle,fill=blue,draw=black,scale=\x1](A2) at (0,5.5) {};
\node [label={[label distance=\y1 cm]\z1: $10320$},circle,fill=blue,draw=black,scale=\x1](A3) at (0,7) {};
\node [label={[label distance=\y1 cm]\z1: $10301$},circle,fill=blue,draw=black,scale=\x1](A4) at (-1.5,2.5) {};
\node [label={[label distance=\y1 cm]\z1: $10311$},circle,fill=blue,draw=black,scale=\x1](A5) at (-1.5,4) {};
\node [label={[label distance=\y1 cm]\z1: $10321$},circle,fill=blue,draw=black,scale=\x1](A6) at (-1.5,5.5) {};
\node [label={[label distance=\y1 cm]\z1: $10302$},circle,fill=blue,draw=black,scale=\x1](A7) at (-3,1) {};
\node [label={[label distance=\y1 cm]\z1: $10312$},circle,fill=blue,draw=black,scale=\x1](A8) at (-3,2.5) {};
\node [label={[label distance=\y1 cm]\z1: $10322$},circle,fill=blue,draw=black,scale=\x1](A9) at (-3,4) {};
\node [label={[label distance=\y1 cm]\z1: $10303$},circle,fill=blue,draw=black,scale=\x1](A10) at (-4.5,-0.5) {};
\node [label={[label distance=\y1 cm]\z1: $10304$},circle,fill=blue,draw=black,scale=\x1](A11) at (-6,-2) {};
\node [label={[label distance=\y1 cm]\z1: $10305$},circle,fill=blue,draw=black,scale=\x1](A12) at (-7.5,-3.5) {};

\node [label={[label distance=\y1 cm]\z1: $00300$},xshift=\xs,yshift=\ys,circle,fill=blue,draw=black,scale=\x1](A13) at (A1) {};
\node [label={[label distance=\y1 cm]\z1: $00310$},xshift=\xs,yshift=\ys,circle,fill=blue,draw=black,scale=\x1](A14) at (A2) {};
\node [label={[label distance=\y1 cm]\z1: $00320$},xshift=\xs,yshift=\ys,circle,fill=blue,draw=black,scale=\x1](A15) at (A3) {};
\node [label={[label distance=\y1 cm]\z1: $00301$},xshift=\xs,yshift=\ys,circle,fill=blue,draw=black,scale=\x1](A16) at (A4) {};
\node [label={[label distance=\y1 cm]\z1: $00311$},xshift=\xs,yshift=\ys,circle,fill=blue,draw=black,scale=\x1](A17) at (A5) {};
\node [label={[label distance=\y1 cm]\z1: $00321$},xshift=\xs,yshift=\ys,circle,fill=blue,draw=black,scale=\x1](A18) at (A6) {};
\node [label={[label distance=\y1 cm]\z1: $00302$},xshift=\xs,yshift=\ys,circle,fill=blue,draw=black,scale=\x1](A19) at (A7) {};
\node [label={[label distance=\y1 cm]\z1: $00312$},xshift=\xs,yshift=\ys,circle,fill=blue,draw=black,scale=\x1](A20) at (A8) {};
\node [label={[label distance=\y1 cm]\z1: $00322$},xshift=\xs,yshift=\ys,circle,fill=blue,draw=black,scale=\x1](A21) at (A9) {};
\node [label={[label distance=\y1 cm]\z1: $00303$},xshift=\xs,yshift=\ys,circle,fill=blue,draw=black,scale=\x1](A22) at (A10) {};
\node [label={[label distance=\y1 cm]\z1: $00304$},xshift=\xs,yshift=\ys,circle,fill=blue,draw=black,scale=\x1](A23) at (A11) {};
\node [label={[label distance=\y1 cm]\z1: $00305$},xshift=\xs,yshift=\ys,circle,fill=blue,draw=black,scale=\x1](A24) at (A12) {};

\node [label={[label distance=\y1 cm]\z1: $00400$},xshift=2*\xs,yshift=2*\ys,circle,fill=blue,draw=black,scale=\x1](A25) at (A1) {};
\node [label={[label distance=\y1 cm]\z1: $00410$},xshift=2*\xs,yshift=2*\ys,circle,fill=blue,draw=black,scale=\x1](A26) at (A2) {};
\node [label={[label distance=\y1 cm]\z1: $00420$},xshift=2*\xs,yshift=2*\ys,circle,fill=blue,draw=black,scale=\x1](A27) at (A3) {};
\node [label={[label distance=\y1 cm]\z1: $00401$},xshift=2*\xs,yshift=2*\ys,circle,fill=blue,draw=black,scale=\x1](A28) at (A4) {};
\node [label={[label distance=\y1 cm]\z1: $00411$},xshift=2*\xs,yshift=2*\ys,circle,fill=blue,draw=black,scale=\x1](A29) at (A5) {};
\node [label={[label distance=\y1 cm]\z1: $00421$},xshift=2*\xs,yshift=2*\ys,circle,fill=blue,draw=black,scale=\x1](A30) at (A6) {};
\node [label={[label distance=\y1 cm]\z1: $00402$},xshift=2*\xs,yshift=2*\ys,circle,fill=blue,draw=black,scale=\x1](A31) at (A7) {};
\node [label={[label distance=\y1 cm]\z1: $00412$},xshift=2*\xs,yshift=2*\ys,circle,fill=blue,draw=black,scale=\x1](A32) at (A8) {};
\node [label={[label distance=\y1 cm]\z1: $00422$},xshift=2*\xs,yshift=2*\ys,circle,fill=blue,draw=black,scale=\x1](A33) at (A9) {};
\node [label={[label distance=\y1 cm]\z1: $00403$},xshift=2*\xs,yshift=2*\ys,circle,fill=blue,draw=black,scale=\x1](A34) at (A10) {};
\node [label={[label distance=\y1 cm]\z1: $00404$},xshift=2*\xs,yshift=2*\ys,circle,fill=blue,draw=black,scale=\x1](A35) at (A11) {};
\node [label={[label distance=\y1 cm]\z1: $00405$},xshift=2*\xs,yshift=2*\ys,circle,fill=blue,draw=black,scale=\x1](A36) at (A12) {};

\node [label={[label distance=\y1 cm]\z1: $00500$},xshift=3*\xs,yshift=3*\ys,circle,fill=blue,draw=black,scale=\x1](A37) at (A1) {};
\node [label={[label distance=\y1 cm]\z1: $00510$},xshift=3*\xs,yshift=3*\ys,circle,fill=blue,draw=black,scale=\x1](A38) at (A2) {};
\node [label={[label distance=\y1 cm]\z1: $00520$},xshift=3*\xs,yshift=3*\ys,circle,fill=blue,draw=black,scale=\x1](A39) at (A3) {};
\node [label={[label distance=\y1 cm]\z1: $00501$},xshift=3*\xs,yshift=3*\ys,circle,fill=blue,draw=black,scale=\x1](A40) at (A4) {};
\node [label={[label distance=\y1 cm]\z1: $00511$},xshift=3*\xs,yshift=3*\ys,circle,fill=blue,draw=black,scale=\x1](A41) at (A5) {};
\node [label={[label distance=\y1 cm]\z1: $00521$},xshift=3*\xs,yshift=3*\ys,circle,fill=blue,draw=black,scale=\x1](A42) at (A6) {};
\node [label={[label distance=\y1 cm]\z1: $00502$},xshift=3*\xs,yshift=3*\ys,circle,fill=blue,draw=black,scale=\x1](A43) at (A7) {};
\node [label={[label distance=\y1 cm]\z1: $00512$},xshift=3*\xs,yshift=3*\ys,circle,fill=blue,draw=black,scale=\x1](A44) at (A8) {};
\node [label={[label distance=\y1 cm]\z1: $00522$},xshift=3*\xs,yshift=3*\ys,circle,fill=blue,draw=black,scale=\x1](A45) at (A9) {};
\node [label={[label distance=\y1 cm]\z1: $00503$},xshift=3*\xs,yshift=3*\ys,circle,fill=blue,draw=black,scale=\x1](A46) at (A10) {};
\node [label={[label distance=\y1 cm]\z1: $00504$},xshift=3*\xs,yshift=3*\ys,circle,fill=blue,draw=black,scale=\x1](A47) at (A11) {};
\node [label={[label distance=\y1 cm]\z1: $00505$},xshift=3*\xs,yshift=3*\ys,circle,fill=blue,draw=black,scale=\x1](A48) at (A12) {};

\draw [line width=\w1 mm] (A12)--(A11)--(A10)--(A7)--(A4)--(A1)--(A2)--(A5)--(A8)--(A9)--(A6)--(A3); 

\draw [line width=\w1 mm,dashed] (A12)--(A48)--(A47)--(A11)  (A10)--(A46)--(A43)--(A7) (A4)--(A40)--(A37)--(A1) (A20)--(A21)--(A9)  (A6)--(A18)--(A15)--(A27)--(A30)--(A33)--(A45)--(A42)--(A39)--(A38) (A20)--(A17)--(A14)--(A26)--(A29)--(A32)--(A44)--(A41)--(A38); 

\end{tikzpicture}\caption{A fragment of the Hamiltonian cycle in the Horadam cube $\Pi_5^{3,3}$.} \label{fig: Hamil_cycle_in_Pi_5^{3,3}}
 \end{figure}

Now let $n=3k-1$ and $k\geq 3$. Then $\Pi^{a,b}_{3k-1}=P_{a-1}\square\Pi^{a,b}_{3k-2}\oplus \Pi^{a,b}_{3k-2}\oplus \Pi^{a,b}_{3k-3}\oplus P_{b-1}\square \Pi^{a,b}_{3k-3}$. Furthermore, $\Pi^{a,b}_{3k-2}\oplus \Pi^{a,b}_{3k-3}=P_{a+1}\Pi^{a,b}_{3k-3}\oplus P_b\Pi^{a,b}_{3k-4}$.  The numbers $a-1$, $b-1$, and $a+1$ are even, and, by induction, there is a Hamiltonian cycle in the subgraph $\Pi^{a,b}_{3k-4}$. Since the construction of the cycles is inductive, they can be merged into a single cycle of the Horadam cube $\Pi^{a,b}_{3k-1}$.   
\end{proof}

As an example, Figure \ref{fig: Hamil_cycle_in_Pi_6^{1,3}} shows a construction of the Hamiltonian cycle in the Horadam cube $\Pi^{1,3}_6$.  

\begin{figure}[h!] \centering 
\begin{tikzpicture}[scale=0.5]
\tikzmath{\x1 = 0.35; \y1 =-0.05; \z1=90; \w1=0.2;   \xs=35; \ys=0;
\x2 = \x1 + 1; \y2 =\y1 +3; } 
\small

\node [label={[label distance=\y1 cm]\z1: $00000$},circle,fill=blue,draw=black,scale=\x1](A3) at (0,6) {};
\node [label={[label distance=\y1 cm]\z1: $00001$},circle,fill=blue,draw=black,scale=\x1](A2) at (1.5,7.5) {};
\node [label={[label distance=\y1 cm]\z1: $00002$},circle,fill=blue,draw=black,scale=\x1](A1) at (3,9) {};
\node [label={[label distance=\y1 cm]\z1: $00003$},circle,fill=blue,draw=black,scale=\x1](A22) at (4.5,10.5) {};
\node [label={[label distance=\y1 cm]\z1: $00010$},circle,fill=blue,draw=black,scale=\x1](A4) at (1.5,4.5) {};
\node [label={[label distance=\y1 cm]\z1: $00020$},circle,fill=blue,draw=black,scale=\x1](A5) at (3,3) {};
\node [label={[label distance=\y1 cm]\z1: $00030$},circle,fill=blue,draw=black,scale=\x1](A40) at (4.5,1.5) {};

\node [label={[label distance=\y1 cm]\z1: $00100$},circle,fill=blue,draw=black,scale=\x1](A6) at (-1.5,7.5) {};
\node [label={[label distance=\y1 cm]\z1: $00101$},circle,fill=blue,draw=black,scale=\x1](A7) at (0,9) {};
\node [label={[label distance=\y1 cm]\z1: $00102$},circle,fill=blue,draw=black,scale=\x1](A8) at (1.5,10.5) {};
\node [label={[label distance=\y1 cm]\z1: $00103$},circle,fill=blue,draw=black,scale=\x1](A23) at (3,12) {};

\node [label={[label distance=\y1 cm]\z1: $00200$},circle,fill=blue,draw=black,scale=\x1](A9) at (-3,9) {};
\node [label={[label distance=\y1 cm]\z1: $00201$},circle,fill=blue,draw=black,scale=\x1](A10) at (-1.5,10.5) {};
\node [label={[label distance=\y1 cm]\z1: $00202$},circle,fill=blue,draw=black,scale=\x1](A11) at (0,12) {};
\node [label={[label distance=\y1 cm]\z1: $00203$},circle,fill=blue,draw=black,scale=\x1](A24) at (1.5,13.5) {};

\node [label={[label distance=\y1 cm]\z1: $01000$},xshift=\xs,yshift=\ys,circle,fill=blue,draw=black,scale=\x1](A14) at (0,6) {};
\node [label={[label distance=\y1 cm]\z1: $01001$},xshift=\xs,yshift=\ys,circle,fill=blue,draw=black,scale=\x1](A13) at (1.5,7.5) {};
\node [label={[label distance=\y1 cm]\z1: $01002$},xshift=\xs,yshift=\ys,circle,fill=blue,draw=black,scale=\x1](A12) at (3,9) {};
\node [label={[label distance=\y1 cm]\z1: $01003$},xshift=\xs,yshift=\ys,circle,fill=blue,draw=black,scale=\x1](A25) at (4.5,10.5) {};

\node [label={[label distance=\y1 cm]\z1: $01010$},xshift=\xs,yshift=\ys,circle,fill=blue,draw=black,scale=\x1](A15) at (1.5,4.5) {};
\node [label={[label distance=\y1 cm]\z1: $01020$},xshift=\xs,yshift=\ys,circle,fill=blue,draw=black,scale=\x1](A16) at (3,3) {};
\node [label={[label distance=\y1 cm]\z1: $01030$},xshift=\xs,yshift=\ys,circle,fill=blue,draw=black,scale=\x1](A26) at (4.5,1.5) {};

\node [label={[label distance=\y1 cm]\z1: $02000$},xshift=2*\xs,yshift=2*\ys,circle,fill=blue,draw=black,scale=\x1](A19) at (0,6) {};
\node [label={[label distance=\y1 cm]\z1: $02001$},xshift=2*\xs,yshift=2*\ys,circle,fill=blue,draw=black,scale=\x1](A18) at (1.5,7.5) {};
\node [label={[label distance=\y1 cm]\z1: $02002$},xshift=2*\xs,yshift=2*\ys,circle,fill=blue,draw=black,scale=\x1](A17) at (3,9) {};
\node [label={[label distance=\y1 cm]\z1: $02003$},xshift=2*\xs,yshift=2*\ys,circle,fill=blue,draw=black,scale=\x1](A27) at (4.5,10.5) {};

\node [label={[label distance=\y1 cm]\z1: $02010$},xshift=2*\xs,yshift=2*\ys,circle,fill=blue,draw=black,scale=\x1](A20) at (1.5,4.5) {};
\node [label={[label distance=\y1 cm]\z1: $02020$},xshift=2*\xs,yshift=2*\ys,circle,fill=blue,draw=black,scale=\x1](A21) at (3,3) {};
\node [label={[label distance=\y1 cm]\z1: $02030$},xshift=2*\xs,yshift=2*\ys,circle,fill=blue,draw=black,scale=\x1](A28) at (4.5,1.5) {};

\node [label={[label distance=\y1 cm]\z1: $03000$},xshift=3*\xs,yshift=3*\ys,circle,fill=blue,draw=black,scale=\x1](A33) at (0,6) {};
\node [label={[label distance=\y1 cm]\z1: $03001$},xshift=3*\xs,yshift=3*\ys,circle,fill=blue,draw=black,scale=\x1](A34) at (1.5,7.5) {};
\node [label={[label distance=\y1 cm]\z1: $03002$},xshift=3*\xs,yshift=3*\ys,circle,fill=blue,draw=black,scale=\x1](A35) at (3,9) {};
\node [label={[label distance=\y1 cm]\z1: $03003$},xshift=3*\xs,yshift=3*\ys,circle,fill=blue,draw=black,scale=\x1](A36) at (4.5,10.5) {};

\node [label={[label distance=\y1 cm]\z1: $03010$},xshift=3*\xs,yshift=3*\ys,circle,fill=blue,draw=black,scale=\x1](A37) at (1.5,4.5) {};
\node [label={[label distance=\y1 cm]\z1: $03020$},xshift=3*\xs,yshift=3*\ys,circle,fill=blue,draw=black,scale=\x1](A38) at (3,3) {};
\node [label={[label distance=\y1 cm]\z1: $03030$},xshift=3*\xs,yshift=3*\ys,circle,fill=blue,draw=black,scale=\x1](A39) at (4.5,1.5) {};

\node [label={[label distance=\y1 cm]\z1: $00300$},circle,fill=blue,draw=black,scale=\x1](A29) at (-4.5,10.5) {};
\node [label={[label distance=\y1 cm]\z1: $00301$},circle,fill=blue,draw=black,scale=\x1](A30) at (-3,12) {};
\node [label={[label distance=\y1 cm]\z1: $00302$},circle,fill=blue,draw=black,scale=\x1](A31) at (-1.5,13.5) {};
\node [label={[label distance=\y1 cm]\z1: $00303$},circle,fill=blue,draw=black,scale=\x1](A32) at (0,15) {};

\draw [line width=\w1 mm] (A6)--(A7)--(A2) (A1)--(A8)--(A23)--(A24)--(A32)--(A31)--(A11)--(A10)--(A30)--(A29)--(A9)--(A6) (A22)--(A1)  (A2)--(A3)--(A4)--(A5)--(A40) (A25)--(A12)  (A13)--(A14)--(A15)--(A16)--(A26) (A27)--(A17)   (A18)--(A19)--(A20)--(A21)--(A28) (A39)--(A38)--(A37)--(A33)--(A34)--(A35)--(A36) (A22)--(A25)  (A40)--(A26)  (A27)--(A36)  (A28)--(A39) (A12)--(A17)  (A13)--(A18) ; 
\draw [line width=\w1 mm,dashed, opacity=0.2]   (A9)--(A10)--(A11)--(A24)  (A3)--(A6)--(A9)--(A29) (A2)--(A7)--(A10)--(A30) (A1)--(A8)--(A11)--(A31)   (A1)--(A12)--(A17)--(A35) (A2)--(A13)--(A18)--(A34) (A3)--(A14)--(A19)--(A33) (A4)--(A15)--(A20)--(A37) (A5)--(A16)--(A21)--(A38)  (A40)--(A26)--(A28)--(A39)--(A38)--(A37)--(A33)--(A34)--(A35)--(A36)--(A27)--(A25)--(A22)--(A23)--(A24)--(A32)--(A31)--(A30)--(A29)--(A9);
\end{tikzpicture}
\caption{The Hamiltonian cycle in the Horadam cube $\Pi_6^{1,3}$.} \label{fig: Hamil_cycle_in_Pi_6^{1,3}}
\end{figure}

In \cite{Hamilton}, authors proved that every Fibonacci cube contains a Hamiltonian path and that the Fibonacci cube is Hamiltonian if the number of vertices is even. It follows that the generalization of the Fibonacci cubes to the Horadam cubes preserves all those properties. Namely, summing the results of Theorems \ref{tm: Horadam_Hamiltonian_path}, \ref{tm: Hor_Ham_cycle_a_even_b_even}, \ref{tm:Hor_Ham_cycle_a_even_b_odd} and \ref{tm: Hor_Ham_cycle_a_odd_b_odd}, we can state all results in one simple corollary:
\begin{corollary} \label{cor: Hamiltonicity}
Every Horadam cube contains a Hamiltonian path. If $n>2$ and the number of vertices in a Horadam cube is even, then the Horadam cube is Hamiltonian.
\end{corollary}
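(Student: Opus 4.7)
The plan is straightforward: the first assertion is already delivered wholesale by Theorem \ref{tm: Horadam_Hamiltonian_path}, which handles arbitrary $a,b,n\geq 1$. So all the work goes into the second assertion, and it reduces to a parity bookkeeping exercise: I must verify that whenever $n>2$ and $s^{a,b}_n=|V(\Pi^{a,b}_n)|$ is even, the pair $(a,b,n)$ falls into one of the parity regimes already covered by Theorems \ref{tm: Hor_Ham_cycle_a_even_b_even}, \ref{tm:Hor_Ham_cycle_a_even_b_odd}, or \ref{tm: Hor_Ham_cycle_a_odd_b_odd}.

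To do that I would split according to the parities of $a$ and $b$ and reduce the Horadam recurrence \eqref{Horadam_recursion} modulo $2$, using $s^{a,b}_0=1$, $s^{a,b}_1=a$. First, if $a$ and $b$ are both even, then $s^{a,b}_n\equiv 0\pmod 2$ for every $n\geq 2$, so the hypothesis ``$s^{a,b}_n$ even'' is automatic and Theorem \ref{tm: Hor_Ham_cycle_a_even_b_even} applies directly. Next, if $a$ is even and $b$ odd, the recurrence collapses to $s^{a,b}_n\equiv s^{a,b}_{n-2}\pmod 2$ with initial values $1,0$, so $s^{a,b}_n$ is even exactly when $n$ is odd, which is precisely the hypothesis of Theorem \ref{tm:Hor_Ham_cycle_a_even_b_odd}.

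The remaining two cases are handled similarly. If $a$ is odd and $b$ even, then $s^{a,b}_n\equiv s^{a,b}_{n-1}\pmod 2$ with initial values $1,1$, so $s^{a,b}_n$ is always odd and the implication is vacuous. Finally, if both $a,b$ are odd, the recurrence becomes the Fibonacci recurrence modulo $2$ with initial values $1,1$, producing the period-$3$ pattern $1,1,0,1,1,0,\ldots$; hence $s^{a,b}_n$ is even iff $n\equiv 2\pmod 3$, i.e., $n=3k-1$, and for $n>2$ this forces $k\geq 2$, which matches exactly the hypothesis of Theorem \ref{tm: Hor_Ham_cycle_a_odd_b_odd}.

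There is no real obstacle here; the only thing that could go wrong is a mismatch between the parity conditions enumerated in the four Hamiltonicity theorems and the parities of $s^{a,b}_n$. The mild ``hard part'' is just confirming that the four theorems jointly cover every $(a,b,n)$ with $n>2$ and $s^{a,b}_n$ even — which the modular calculation above does. The corollary then follows by assembling the cases.
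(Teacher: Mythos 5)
Your proposal is correct and follows essentially the same route as the paper: the first assertion is Theorem \ref{tm: Horadam_Hamiltonian_path}, and the second is settled by determining the parity of $s^{a,b}_n$ via the recurrence modulo $2$ in the four cases for the parities of $a$ and $b$, then matching each even case to Theorems \ref{tm: Hor_Ham_cycle_a_even_b_even}, \ref{tm:Hor_Ham_cycle_a_even_b_odd} and \ref{tm: Hor_Ham_cycle_a_odd_b_odd}. Your version merely makes the modular computation more explicit than the paper's ``easily verified by induction.''
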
 \begin{proof} The number of vertices in Horadam cubes is determined by the sequence $s^{a,b}_n$ that satisfies the recurrence $s^{a,b}_{n}=as^{a,b}_{n-1}+bs^{a,b}_{n-2}$ with initial values $s^{a,b}_0=1$, $s^{a,b}_1=a$ and $s^{a,b}_{n}=0$ for $n<0$. It is not hard to see that $s^{a,b}_n$ is even in the following cases: if both $a$ and $b$ are even and $n\geq 1$; if both $a$ 
 and $b$ are odd and $n=3k-1$ for some $k\in\mathbb{N}$; and the last case if $a$ is even, and $b$ and $n$ are odd. In the case where $a$ is odd and $b$ is even, the number $s^{a,b}_n$ is odd for every $n$. All these claims can be easily verified by induction. By Theorems \ref{tm: Hor_Ham_cycle_a_even_b_even}, \ref{tm:Hor_Ham_cycle_a_even_b_odd} and \ref{tm: Hor_Ham_cycle_a_odd_b_odd}, the claim follows.   
\end{proof}

In particular, for $b=1$, Theorem \ref{tm: Hor_Ham_cycle_a_odd_b_odd} extends the results for the Hamiltonicity of the metallic cubes for $a$ odd.

Corollary \ref{cor: Hamiltonicity} only provides the necessary conditions for the Horadam cubes to be Hamiltonian. Some algorithms indicate that the Horadam cubes are not Hamiltonian if the number of vertices is odd, but this is yet to be proved (or disproved). 

\section{Concluding remarks} In this paper, we defined and investigated the Horadam cubes, a new family of graphs that generalizes Fibonacci and metallic cubes. Their name reflects the fact that the number of vertices in this family of graphs satisfies the Horadam's recurrence. We explored their basic structural and enumerative properties. As it turned out, the generalized family of graphs retains many appealing and useful properties of the Fibonacci cubes. In particular, it was shown that they admit recursive decomposition, as well as the decomposition into grids, similar to the metallic cubes. Furthermore, the Horadam cubes are induced subgraphs of hypercubes and bipartite median graphs. All Horadam cubes contain a Hamiltonian path, and a Horadam cube is Hamiltonian if the number of vertices is even. While our results offer insight into the characteristics of the new graph family, they fall short of presenting a complete overview. For example, in the 
context of the edges, one can investigate edge general position set \cite{Edge_General_Position_Sets_in_Fibonacci_and_Lucas_Cubes}, and in the context of the degree distribution, our results pave the way for the computation of some degree-based topological invariants. Similarly, our results on distances provide a framework for the derivation of distance-based invariants, including notable examples such as the Wiener and Mostar indices \cite{Doslic-JMC-19}, computed for the Fibonacci and Lucas cubes \cite{MosFibLuc, klavmoll}. Extending our results on the distribution of degrees, a thorough exploration of irregularities, modeled after the calculations for the Fibonacci and Lucas cubes \cite{irreg1, irreg2}, would contribute to a more comprehensive portrait of the Horadam cubes. Another direction of research would include an investigation of the existence of the perfect codes in the Horadam cubes, similar to what was done for the Lucas cubes \cite{mollar1}, or determining the smallest dimension of a hypercube that contains a Horadam cube of dimension $n$ as a subgraph. 


\bibliography{main.bib}
\bibliographystyle{plain}
\end{document}